\author{Tomasz \.Zuchowski}
\title{The Nikodym property and filters on $\omega$}
\address{Mathematical Institute, University of Wrocław, Wrocław, Poland.}
\email{tomasz.zuchowski@math.uni.wroc.pl}
\thanks{The author was supported by the Austrian Science Fund (FWF), research grant no. I 5918-N}
\subjclass[2020]{Primary: 03E75, 28A33, 03E05. Secondary: 54A20, 28A60, 06E15.}
\keywords{Nikodym property, convergence of measures, Boolean algebras, filters on countable sets, Kat\v{e}tov order, density ideals.}
\newtheorem{theorem}{Theorem}[section]
\newtheorem{lemma}[theorem]{Lemma}
\newtheorem{proposition}[theorem]{Proposition}
\newtheorem{question}[theorem]{Question}
\newtheorem{corollary}[theorem]{Corollary}
\theoremstyle{definition}
\newtheorem{definition}[theorem]{Definition}
\theoremstyle{remark}
\newtheorem{remark}[theorem]{Remark}
\numberwithin{equation}{section}
\renewcommand{\setminus}{\mathbin{\backslash}}%
\newcommand{\con}{\mathfrak c}
\newcommand{\eps}{\varepsilon}
\newcommand{\BB}{\protect{\mathcal B}}
\newcommand{\AAA}{\mathcal A}
\newcommand{\FF}{{\mathcal F}}
\newcommand{\GG}{{\mathcal G}}
\newcommand{\II}{{\mathcal I}}
\newcommand{\JJ}{{\mathcal J}}
\newcommand{\NN}{{\mathcal N}}
\newcommand{\PP}{{\mathcal P}}
\newcommand{\SSS}{{\mathcal S}}
\newcommand{\HH}{{\mathcal H}}
\newcommand{\ZZ}{{\mathcal Z}}
\newcommand{\er}{\mathbb R}
\newcommand{\sm}{\setminus}
\newcommand{\sub}{\subseteq}
\newcommand{\unb}{\mathfrak b}
\newcommand{\dom}{\mathfrak d}
\newcommand{\cantor}{2^{\omega}}
\newcommand{\Baire}{\omega^{\omega}}
\newcommand{\hrusak}{Hru\v{s}\'ak}
\newcommand{\hernandez}{Hern\'andez-Hern\'andez}
\newcommand{\guzman}{Guzm\'an-Gonz\'alez}
\newcommand{\meza}{Meza-Alc\'antara}
\newcommand{\katetov}{Kat\v{e}tov}
\newcommand{\cech}{\v{C}ech}
\newcommand{\erdos}{Erd\H{o}s}
\newcommand{\stevo}{Todor\v{c}evi\'c}
\newcommand{\seqnN}[1]{\langle#1\colon n\in\omega\rangle}
\newcommand{\seqnNbig}[1]{\big\langle#1\colon n\in\omega\big\rangle}
\newcommand{\seqkNbig}[1]{\big\langle#1\colon k\in\omega\big\rangle}
\newcommand{\seqnNP}[1]{\langle#1\colon n\in\omega_+\rangle}
\newcommand{\seqkN}[1]{\langle#1:\ k\in\omega\rangle}
\DeclareMathOperator{\at}{at}
\DeclareMathOperator{\interior}{int}
\DeclareMathOperator{\cov}{cov}
\DeclareMathOperator{\tr}{tr}
\DeclareMathOperator{\finI}{Fin}
\DeclareMathOperator{\clopen}{Clopen}
\DeclareMathOperator{\exh}{Exh}
\DeclareMathOperator{\supp}{supp}
\begin{document}

\begin{abstract}
For a free filter $F$ on $\omega$, let $N_F=\omega\cup\{p_F\}$, where $p_F\not\in\omega$, be equipped with the following topology: every element of
$\omega$ is isolated whereas all open neighborhoods of $p_F$ are of the form $A\cup\{p_F\}$ for $A\in F$. The aim of this paper is to study spaces of the form $N_F$ in the context of the Nikodym property of Boolean algebras. By $\mathcal{AN}$ we denote the class of all those ideals $\mathcal{I}$ on $\omega$ such that for the dual filter $\mathcal{I}^*$ the space $N_{\mathcal{I}^*}$ carries a sequence $\langle\mu_n\colon n\in\omega\rangle$ of finitely supported signed measures such that $\|\mu_n\|\rightarrow\infty$ and $\mu_n(A)\rightarrow 0$ for every clopen subset $A\subseteq N_{\mathcal{I}^*}$. We prove that $\mathcal{I}\in\mathcal{AN}$ if and only if there exists a density submeasure $\varphi$ on $\omega$ such that
$\varphi(\omega)=\infty$ and $\mathcal{I}$ is contained in the exhaustive ideal $\mbox{Exh}(\varphi)$. 
Consequently, we get that if $\mathcal{I}\subseteq\mbox{Exh}(\varphi)$ for some density submeasure $\varphi$ on $\omega$ such that $\varphi(\omega)=\infty$ and $N_{\mathcal{I}^*}$ is homeomorphic to a subspace of the Stone space $St(\mathcal{A})$ of a given Boolean algebra $\mathcal{A}$, then $\mathcal{A}$ does not have the Nikodym property. 

We observe that 
each $\mathcal{I}\in\mathcal{AN}$ is Kat\v{e}tov below the asymptotic density zero ideal $\mathcal{Z}$, and prove that the class $\mathcal{AN}$ has a subset of size $\mathfrak{d}$ which is dominating with respect to the Kat\v{e}tov order $\leq_K$, but $\mathcal{AN}$ has no $\leq_K$-maximal element. We show that for a density ideal $\mathcal{I}$ it holds $\mathcal{I}\not\in\mathcal{AN}$ if and only if $\mathcal{I}$ is totally bounded if and only if the Boolean algebra $\mathcal{P}(\omega)/\mathcal{I}$ contains a countable splitting family.

Our results shed some new light on differences between the Nikodym property and the Grothendieck property of Boolean algebras.
\end{abstract}

\maketitle

\section{Introduction}

Let $\AAA$ be a Boolean algebra. We say that a sequence $\seqnN{\mu_n}$ of finitely additive signed measures on a Boolean algebra $\AAA$ is \textit{pointwise bounded} if $\sup_{n\in\omega}|\mu_n(A)|<\infty$ for every $A\in\AAA$, and it is \textit{uniformly bounded} if $\sup_{n\in\omega}\|\mu_n\|<\infty$ (see Section $2$ for more definitions).
The Nikodym Uniform Boundedness Theorem implies that if $\AAA$ is $\sigma$-complete, then every pointwise bounded sequence of measures on $\AAA$ is uniformly bounded. This theorem, due to its numerous applications, is one of the most important results in the theory of vector measures, see Diestel and Uhl \cite[Section I.3]{DieUhl}, and hence motivates the following definition. 

\begin{definition}
A Boolean algebra $\AAA$ has the \textit{Nikodym property} if every pointwise bounded sequence of finitely additive signed measures on $\AAA$ is uniformly bounded.
\end{definition}
\noindent Equivalently, $\AAA$ has the Nikodym property if, for every sequence $\seqnN{\mu_n}$ of finitely additive signed measures on $\AAA$ such that $\mu_n(A)\to 0$ for all $A\in\AAA$, we have $\sup_{n\in\omega}\|\mu_n\|<\infty$ (see \cite[Proposition 2.4]{SZ}).

The Nikodym theorem implies that $\sigma$-complete algebras have the Nikodym property. This result has been generalized by many authors who introduced various weaker properties of Boolean algebras that still imply the Nikodym property, see e.g. Haydon \cite{Hay}, Schachermayer \cite{Sch}, Freniche \cite{Fre}. 

On the other hand, countable infinite Boolean algebras do not have the Nikodym property. Indeed, the Stone space $St(\AAA)$ of a countable infinite Boolean algebra $\AAA$ is homeomorphic to an infinite closed subset of the Cantor space $2^{\omega}$, hence it contains a non-trivial sequence $\seqnN{x_n}$ convergent to some $x\in\cantor$. Then, the sequence $\seqnNbig{n(\delta_{x_n}-\delta_x)}$ of Borel measures on $St(\AAA)$ induces a pointwise bounded but not uniformly bounded sequence of finitely additive measures on $\AAA$. This example is a motivation for considering the following type of spaces.

\begin{definition}
Let $F$ be a free filter on $\omega$. Endow the set $N_F =\omega\cup\{p_F\}$, where $p_F$ is a fixed point not belonging to $\omega$, with the topology $\tau_F$ defined in the following way:
\begin{itemize}
\item every point of $\omega$ is isolated in $N_F$ , i.e. $\{n\}$ is open for every $n\in\omega$,
\item a set $U\sub N_F$ is an open neighborhood of $p_F$ if and only if $U=A\cup\{p_F\}$ for some $A\in F$.
\end{itemize}
\end{definition}

For every free filter $F$ the space $N_F$ is a countable non-discrete zero-dimensional space (and so a Tychonoff space). The space $N_{Fr}$, where by $Fr$ we denote the Fr\'echet filter on $\omega$ (see Section $2$), is homeomorphic to a convergent sequence of points in a given Tychonoff space together with its limit point. Thus, for a Boolean algebra $\AAA$, the Stone space $St(\AAA)$ contains a non-trivial convergent sequence if and only if it contains a homeomorphic copy of $N_{Fr}$. Consequently, if $N_{Fr}$ embeds into $St(\AAA)$, then $\AAA$ does not have the Nikodym property. In this paper we study for which other free filters we also have such an implication.

We need to introduce some notions. We say that a Borel measure $\mu$ on a Tychonoff space $X$ is \textit{finitely supported} if it can be written in the form $\mu=\sum_{i=1}^n\alpha_i\delta_{x_i}$ for some distinct points $x_1,\ldots,x_n\in X$ and some real numbers $\alpha_1,\ldots,\alpha_n\in\er$, $n\in\omega$; in this case we write $\|\mu\|=\sum_{i=1}^n|\alpha_i|$ (see again Section $2$). 

\begin{definition}
Let $X$ be a zero-dimensional (Tychonoff) space. We call a sequence $\seqnN{\mu_n}$ of finitely supported measures on $X$ \textit{anti-Nikodym}, or shortly an \textit{AN-sequence}, if $\|\mu_n\|\rightarrow\infty$ and $\mu_n(A)\rightarrow 0$ for every $A\in\clopen(X)$. We say that $X$ has the \textit{finitely supported Nikodym property}
if there is no anti-Nikodym sequence of measures on $X$.

We also say that a free filter $F$ on $\omega$ has the \textit{Nikodym property} if the space $N_F$ has a finitely supported Nikodym property. We denote by $\AAA\NN$ the class of all ideals on $\omega$ whose dual filters do not have the Nikodym property.
\end{definition}

We prove that if $\AAA$ is a Boolean algebra, $F$ is a free filter on $\omega$ which does not have the Nikodym property, and $N_F$ homeomorphically embeds into $St(\AAA)$, then $\AAA$ does not have the Nikodym property (Corollary \ref{corollarynikodymalgebras}). This result generalizes the aforementioned fact about embedding $N_{Fr}$ into the Stone spaces of Boolean algebras.

For any non-trivial convergent sequence $\seqnN{x_n}$ in a given space $N_F$, the sequence $\seqnN{\mu_n}$ of finitely supported non-negative measures defined for each $n\in\omega$ by the equality $\mu_n=n\cdot\delta_{x_n}$ satisfies the conditions $\lim_{n\to\infty}\mu_n(\omega)=\infty$ and $\lim_{n\to\infty}\mu_n(\omega\sm A)=0$ for every $A\in F$. 
We show that the Nikodym property for filters is in fact characterized by similar conditions.

\begin{restatable*}{theorem}{theorempositivemeasures}\label{thm_positive_measures}
A filter $F$ on $\omega$ has the Nikodym property if and only if there is no (disjointly supported) sequence $\seqnN{\mu_n}$ of finitely supported non-negative measures on $N_F$ such that:
\begin{enumerate}[(1)]
\item $\supp(\mu_n)\sub\omega$ for every $n\in\omega$,
\item $\lim_{n\to\infty}\mu_n(\omega)=\infty$,
\item $\lim_{n\to\infty}\,\mu_n(\omega\sm A)=0$ for every $A\in F$.
\end{enumerate}
\end{restatable*}

Theorem \ref{thm_positive_measures} can be translated in the following way, showing a connection between filters without the Nikodym property and exhaustive ideals associated to lower semicontinuous (\textit{lsc}) submeasures on $\omega$.

\begin{restatable*}{theorem}{theoremANexh}\label{thm_AN_exh}
Let $F$ be a filter on $\omega$. Then, the following are equivalent:
\begin{enumerate}[(1)]
\item $F$ does not have the Nikodym property;
\item there is a density submeasure $\varphi$ on $\omega$ such that $\varphi(\omega)=\infty$ and $F\sub\exh(\varphi)^*$;
\item there is a non-pathological lsc submeasure $\varphi$ on $\omega$ such that $\varphi(\omega)=\infty$ and $F\sub\exh(\varphi)^*$.
\end{enumerate}
\end{restatable*}

The \katetov\ order $\leq_K$ is an important tool for comparing the structural complexity of ideals on $\omega$, see e.g. \cite{Hru}. As a corollary to Theorem \ref{thm_AN_exh}, we get that the class of the ideals dual to filters without the Nikodym property is bounded in this order by the ideal $\ZZ$ of subsets of $\omega$ having \textit{asymptotic density zero} (see Section $4$).

\begin{restatable*}{corollary}{corollaryzet}\label{zz}
If $F$ is a filter on $\omega$ without the Nikodym property, then $F^*\leq_K\ZZ$.
\end{restatable*}

Extending \cite[Theorem 3.16]{Try}, we present several conditions for a density ideal equivalent to the statement that its dual filter has the Nikodym property (see Sections \ref{section_ideals} and \ref{section_katetov} for relevant definitions of classes and properties of ideals).

\begin{restatable*}{theorem}{theoremnikodymequiv}\label{nikodym_equiv}
Let $\II$ be the density ideal on $\omega$ generated by a sequence of measures $\seqnN{\mu_n}$. The following are equivalent:
\begin{enumerate}
\item $\II$ is isomorphic to an \erdos--Ulam ideal.
\item $\II$ does not satisfy the Bolzano--Weierstrass property.
\item The measures $\mu_n$ fulfill the following conditions:
\begin{enumerate}[a)]
\item $\sup_{n\in\omega}\|\mu_n\|< \infty$,
\item $\lim_{n\to\infty} \at^+(\mu_n) = 0$.
\end{enumerate} 
\item The Boolean algebra $\PP(\omega)/\II$ contains a countable splitting family.
\item $\II$ is totally bounded.
\item $\II\equiv_K\ZZ$.
\item $\II^*$ has the Nikodym property.
\end{enumerate}
\end{restatable*}

We make an analysis of the cofinal structure of the family $\AAA\NN$ ordered by $\leq_K$ and we prove that there is a family of size equal to the dominating number $\dom$ which is $\leq_K$-dominating in $\AAA\NN$, but there are no $\leq_K$-maximal elements in $\AAA\NN$.

\begin{restatable*}{theorem}{corollarynikodymdominating}\label{dom}
There exists a family $\FF\sub\AAA\NN$ of size $\dom$ consisting of density ideals and having the property that for any $\II\in\AAA\NN$ there is $\JJ\in\FF$ such that $\II\leq_K\JJ$.
\end{restatable*}

\begin{restatable*}{theorem}{theoremnomaximal}\label{thm_no_maximal}
For every $\II\in\AAA\NN$ there is $\JJ\in\AAA\NN$ such that $\II<_K\JJ$.
\end{restatable*}

Using Theorem \ref{thm_AN_exh} and results from \cite{GuzMez} and \cite{Kwela}, we show that the classes of those filters $F$ on $\omega$ for which the spaces $N_F$ (do not) have the Nikodym property are large. First, there are families $\HH_1,\HH_2\sub\AAA\NN$ of size continuum $\con$, consisting respectively of non-isomorphic summable ideals and of non-isomorphic density ideals (Corollaries \ref{many_summable} and \ref{many_density}). Next, there are families $\FF_1$ and $\FF_2$ of size $2^{\con}$, consisting of non-isomorphic filters on $\omega$ whose dual ideals are tall and which respectively have and do not have the Nikodym property (Corollary \ref{many_many}). Those results yield the following corollary.

\begin{restatable*}{corollary}{corollarymanyAN}
There is a family $\FF$ of $2^{\con}$ many pairwise non-homeomorphic countable non-discrete spaces without non-trivial convergent sequences, and such that for every Boolean algebra $\AAA$ and $X\in\FF$, if $X$ homeomorphically embeds into $St(\AAA)$, then $\AAA$ does not have the Nikodym property.
\end{restatable*}

\medskip

Our results also apply to the Grothendieck property of Boolean algebras, which is closely related to the Nikodym property. This property has its origin in Banach space theory, see \cite{Gro}.

\begin{definition}\label{def:grothendieck}
A Boolean algebra $\AAA$ has the \textit{Grothendieck property} if, for every sequence $\seqnN{\mu_n}$ of finitely additive signed measures on $\AAA$ such that $\sup_{n\in\omega}\|\mu_n\|<\infty$ and $\mu_n(A)\to 0$ for all $A\in\AAA$, we have $\widehat{\mu}_n(B)\to 0$ for every Borel subset $B\sub St(\AAA)$, where $\widehat{\mu}_n$ denotes for each $n\in\omega$ the Radon extension of $\mu_n$ onto the space $St(\AAA)$.
\end{definition}

The connection between the Grothendieck property and the Nikodym property of Boolean algebras has been a subject of an extensive research (see e.g. \cite{Fre}, \cite{Hay} or \cite{Sch}), but it is still not fully understood. It is not easy to find an algebra having exactly one of these properties. Schachermayer \cite{Sch} showed that the algebra $\mathscr{J}$ of Jordan-measurable subsets of the unit interval has the Nikodym property but does not have the Grothendieck property (see also \cite{GraWhe}, \cite{Val}, \cite{SZMin}). On the other hand, Talagrand \cite{Tal} constructed  under the assumption of the Continuum Hypothesis a Boolean algebra with the Grothendieck property and without the Nikodym property, but it is still an open problem if one can find such an example in ZFC (cf. \cite{SZMA}, \cite{GW}).

Marciszewski and Sobota studied in \cite{MS} a property of spaces of the form $N_F$ that is connected with the Grothendieck property in a similar way as the finitely supported Nikodym property is connected with the Nikodym property of Boolean algebras.
We need again to introduce some piece of notation. For a topological space $X$ we denote by $C(X)$ the set of all continuous real-valued functions on $X$ and by $C_b(X)$ the set of all bounded continuous real-valued functions on $X$. 
If $\mu$ is a Borel measure on $X$ and $f\in C(X)$, then we write $\mu(f)=\int_Xfd\mu$.

\begin{definition}
Let $X$ be a Tychonoff space. A sequence $\seqnN{\mu_n}$ of finitely supported measures on $X$ is a \textit{bounded Josefson--Nissenzweig sequence}, or shortly a \textit{BJN-sequence}, if $\|\mu_n\|=1$ for every $n\in\omega$ and $\mu_n(f)\rightarrow 0$ for every $f\in C_b(X)$. We say that $X$ has the \textit{bounded Josefson--Nissenzweig property}, or shortly the \textit{BJNP}, if there is a BJN-sequence of measures on $X$. 

We also say that a free filter $F$ on $\omega$ has the \textit{bounded Josefson--Nissenzweig property}, or shortly the \textit{BJNP}, if the space $N_F$ has the BJNP. We denote by $\BB\JJ\NN\PP$ the class of all ideals on $\omega$ whose dual filters have the BJNP.
\end{definition}

The bounded Josefson--Nissenzweig property was introduced in \cite{KMSZ}, as a tool for investigating the linear structure of the spaces $C_b(X\times Y)$ endowed with the pointwise topology.
It is proved in \cite[Corollary D]{MS} that if $\AAA$ is a Boolean algebra, $F$ is a free filter on $\omega$ that has the BJNP, and $N_F$ homeomorphically embeds into $St(\AAA)$, then $\AAA$ does not have the Grothendieck property.

 By Proposition \ref{nikodym_bjn}, $\AAA\NN$ is a subclass of $\BB\JJ\NN\PP$. As we mentioned earlier, by Theorem \ref{thm_no_maximal} no element of $\AAA\NN$ is maximal with respect to the order $\leq_K$. In contrast, the class $\BB\JJ\NN\PP$ has $\leq_K$-maximal elements, e.g. the asymptotic density zero ideal $\ZZ$ is such an element (see \cite[Theorem C]{MS}). Moreover, Theorem \ref{nikodym_equiv} shows that for a density ideal $\II$ it is equivalent that $\II\notin\AAA\NN$ and that $\II$ is a $\leq_K$-maximal element in $\BB\JJ\NN\PP$. For more details on differences between the classes $\AAA\NN$ and $\BB\JJ\NN\PP$ see Section $6$.
 
\medskip
 
The structure of the paper is as follows. In Section 2 we describe basic notation and terminology used in this paper.
In Section 3 we analyze basic properties of anti-Nikodym sequences of measures on spaces $N_F$. Section 4 is devoted to proving characterizations of the Nikodym property of filters, and studying connections between this property and submeasures on $\omega$. In Section 5 we present several conditions equivalent to $\II\not\in\AAA\NN$ for a density ideal $\II$, and we investigate the \katetov\ order on the class $\AAA\NN$, in particular its cofinal structure. In Section 6 we compare the Nikodym property with the bounded Josefson--Nissenzweig property of filters, and also classes of ideals on $\omega$ related to these properties. In Section 7 we analyze some families of ideals that are contained in the class $\AAA\NN$, which give us some rich substructures in the order $(\AAA\NN, \leq_K)$. Section 8 is devoted to applications of the Nikodym property of filters to Boolean algebras. The last section provides some remarks and open questions.

\subsection*{Acknowledgements}
The author would like to thank Damian Sobota for many inspiring discussions that contributed to the results of this paper, as well as for helpful comments during the preparation of the paper.

\section{Preliminaries}

By $\omega$ we denote the first infinite (countable) cardinal number. We also denote $\omega_+=\omega\sm\{0\}=\{1,2,3,\ldots\}$. For $A,B\sub\omega$ we write $A\sub^*B$ if $A\sm B$ is finite. For every $k,n\in\omega$ such that $k<n$ we set $[k,n]=\{k,k+1,\ldots,n\}$. By $\con$ we denote the cardinality of the real line $\er$, and by $\con^+$---the cardinal successor of $\con$.

If $A$ is a non-empty set, then $\PP(A)$ denotes its power set. A family $F\sub\PP(A)$ is a \textit{filter on }$A$ if $\emptyset\notin F$, $A\in F$ and $F$ is closed under finite intersections and taking supersets. By $Fr$ we denote the \textit{Fr\'echet filter} on $\omega$, i.e. $Fr=\{B\sub\omega\colon\omega\sm B$ is finite$\}$. A filter $F$ on $\omega$ is \textit{free} if $Fr\sub F$. Throughout this article, all filters on $\omega$ are assumed to be free. For a family $S\sub\PP(A)$ we put $S^*=\{X\colon A\sm X\in S\}$---$S^*$ is said to be \textit{dual to $S$}.

A family $\II\sub\PP(\omega)$ is an \textit{ideal on }$\omega$ if $\II^*$ is a free filter on $\omega$. Note that this implies $Fin\sub\II$, where $Fin$ is the ideal of all finite subsets of $\omega$. An ideal $\II$ on $\omega$ is a \textit{P-ideal} if for every sequence $\seqnN{A_n\in\II}$ there is $A\in\II$ such that $A_n\sub^*A$ for every $n\in\omega$. We say that an ideal $\II$ on $\omega$ is \textit{tall} if for every infinite $A\sub\omega$ there exists an infinite $B\sub A$ such that $B\in\II$.

We denote the set of all functions from $\omega$ to $\omega$ by $ \Baire$. We define a preorder $\leq^*$ on $\Baire$ by setting $f\leq^* g$ if $f(n)\leq g(n)$ for all but finitely many $n\in\omega$. There are two cardinal invariants related to this preorder:
\begin{itemize}
\item $\dom$ is the smallest size of a family $\FF\sub\Baire$ such that for every $g\in\Baire$ there is $f\in\FF$ with $g\leq^*f$ (such a family is called \textit{dominating} in $\Baire$);
\item $\unb$ is the smallest size of a family $\FF\sub\Baire$ such that for every $g\in\Baire$ there is $f\in\FF$ with $f\nleq^*g$ (such a family is called \textit{unbounded} in $\Baire$).
\end{itemize}

\noindent The invariants $\dom$ and $\unb$ are examples of the \textit{cardinal characteristics of the continuum}, i.e. cardinal invariants which are between $\aleph_1$ and $\con$. Such an invariants are studied extensively in the field of infinitary combinatorics, see \cite{Bla}.

Let $\AAA$ be a Boolean algebra. $St(\AAA)$ denotes the Stone space of $\AAA$, i.e. the space of all ultrafilters on $\AAA$ endowed with the standard topology. If $A\in\AAA$, then $[A]_{\AAA}$ denotes the clopen set in $St(\AAA)$ corresponding via the Stone duality to $A$. 

A function $\mu\colon\AAA\to\er$ is called a \textit{measure on a Boolean algebra} $\AAA$ if it is a finitely additive function of bounded total variation, that is, 
\[\|\mu\|=\sup\big\{|\mu(A)|+|\mu(B)|\colon A,B\in\AAA, A\land B=0_{\AAA}\big\}<\infty.\]

Let $X$ be a Tychonoff space. By $\clopen(X)$ we denote the family of clopen subsets of $X$, i.e. subsets which are both open and closed. When we say that $\mu$ is a \textit{measure on} $X$, then we mean that $\mu$ is a $\sigma$-additive regular signed measure defined on the Borel $\sigma$-algebra $Bor(X)$ of $X$ which has bounded total variation, that is,
\[\|\mu\|=\sup\big\{|\mu(A)|+|\mu(B)|\colon A,B\in Bor(X), A\cap B=\emptyset\big\}<\infty.\]
 By $|\mu|(\cdot)$ we denote the \textit{variation} of $\mu$; note that $\|\mu\|=|\mu|(X)$. $\supp(\mu)$ denotes the \textit{support} of $\mu$. If $A\in Bor(X)$, then $\mu\restriction A$ is defined by the formula 
$(\mu\restriction A)(B)=\mu(A\cap B)$ for every $B\in Bor(X)$. $\mu$ is a \textit{non-negative measure} if $\mu(A)\geq 0$ for every $A\in Bor(X)$, and it is a \textit{probability measure} if it is non-negative and $\mu(X)=1$.

We define the \textit{Radon extension} $\widehat{\mu}$ of a measure $\mu$ on a Boolean algebra $\AAA$ as the unique measure on $St(\AAA)$ such that $\widehat{\mu}\big([A]_{\AAA}\big)=\mu(A)$ for every $A\in\AAA$.

If $X$ is a Tychonoff space and $x\in X$, then $\delta_x$ denotes the one-point measure on $X$ concentrated at $x$.
We say that a measure $\mu$ on $X$ is \textit{finitely supported} if it is of the form $\mu=\sum_{i=1}^n\alpha_i\cdot\delta_{x_i}$ for some $\alpha_1,\ldots,\alpha_n\in\er\sm\{0\}$ and distinct $x_1,\ldots,x_n\in X$. 
It follows that 
$\supp(\mu)=\{x_1,\ldots,x_n\}$, $\|\mu\|=\sum_{i=1}^n|\alpha_i|$, and that for every $A\in Bor(X)$ we have $\mu\restriction A=\sum_{i\colon x_i\in A}\alpha_i\cdot\delta_{x_i}$ and $|\mu|(A)=\|\mu\restriction A\|=\sum_{i\colon x_i\in A}|\alpha_i|$.

For a non-negative measure $\mu$ supported on a finite set
$X$ we define the following numbers (cf. \cite[Section 1.13]{Far}):
\[\at^+(\mu)=\max \big\{\mu(\{x\})\colon x\in X\big\}, \]
\[\at^-(\mu)=\min \big\{\mu(\{x\})\colon x\in X\big\}. \]

If $\mu$ is a measure on $X$ and $f\in C(X)$, then we set $\mu(f) = \int_{X}fd\mu$ if the integral exists. Note that if $\mu$ is of the form $\sum_{i=1}^n\alpha_i\cdot\delta_{x_i}$, then $\mu(f)=\sum_{i=1}^n\alpha_i\cdot f(x_i)$.

\section{Anti-Nikodym sequences of measures on spaces $N_F$}

In this section we study the main properties of anti-Nikodym sequences of measures on spaces $N_F$. 

We start with a remark about the possible generalization of the finitely supported Nikodym property. One may ask whether we could exchange the condition that $\mu_n(A)\rightarrow 0$ for every $A\in\clopen(X)$ for the condition that $\mu_n(f)\rightarrow 0$ for every $f\in C_b(X)$, and in this way generalize the definition of the finitely supported Nikodym property to non-zero-dimensional spaces. However, this property is not satisfied for any Tychonoff space $(X,\tau)$ which has a weaker compact Hausdorff topology $\tau'\sub\tau$, as it would imply that there is a sequence of measures on $(X, \tau')$ which is weak* convergent to $0$ and unbounded in norm in the dual space $C(X,\tau')^*$  of bounded linear functionals on the Banach space $C(X,\tau')$---that would contradict the Banach--Steinhaus theorem for $C(X,\tau')$.
In particular, for any filter $F$ the space $N_F$ does not satisfy this stronger property, as $F$ contains the Fr\'echet filter $Fr$ and so the topology $\tau_{Fr}$ is weaker than $\tau_F$, and $N_{Fr}$ is compact being homeomorphic to a convergent sequence of real numbers together with its limit point.

\begin{remark} \label{rem_subfilter}
If $F\sub G$ are filters on $\omega$, then an AN-sequence $\seqnN{\mu_n}$ of measures on $N_G$ is also AN-sequence on $N_F$, because the topology of $N_G$ is \textit{finer} than the topology of $N_F$, so $\clopen(N_F)\sub\clopen(N_G)$. The same is true for BJN-sequences of measures.
\end{remark}

The following result shows that we can always modify an anti-Nikodym sequence of measures on a space $N_F$ so that the measures have disjoint supports. It is known that the analogous result holds for BJN-sequences of measures on any Tychonoff space (see \cite[Lemma 5.1.(3)]{MS}).

\begin{proposition}\label{prop_disjoint}
If a filter $F$ on $\omega$ does not have the Nikodym property, then there is an AN-sequence $\seqnN{\mu_n}$ of measures on $N_F$ which is disjointly supported, that is, $\supp(\mu_k)\cap\supp(\mu_l)=\emptyset$ for every $k\neq l\in\omega$. 
\end{proposition}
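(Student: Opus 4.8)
The plan is to start from an arbitrary AN-sequence $\seqnN{\nu_n}$ on $N_F$ (which exists since $F$ lacks the Nikodym property) and to manufacture a disjointly supported one by a gliding-hump extraction. Throughout I would use two elementary facts about $N_F$: every singleton $\{j\}$ with $j\in\omega$ is clopen, so for a fixed finite set $E\sub\omega$ the variation $|\nu_n|(E)=\sum_{j\in E}|\nu_n(\{j\})|$ tends to $0$; and $N_F$ itself is clopen, so $\nu_n(N_F)\to 0$. Recall also that $A\sub\omega$ is clopen exactly when $A\in F^*$, while a clopen set containing $p_F$ has the form $B\cup\{p_F\}$ with $B\in F$.

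The engine of the argument is the following gliding hump, which I would apply to any AN-sequence $\seqnN{\sigma_k}$ that is already supported inside $\omega$. Build finite sets $E_0=\emptyset\sub E_1\sub\cdots$ and indices $k_0<k_1<\cdots$ recursively: given the fixed finite set $E_j$, choose $k_j$ so large that $|\sigma_{k_j}|(E_j)<2^{-j}$ (possible since the variation on a fixed finite set vanishes) and $\|\sigma_{k_j}\|-|\sigma_{k_j}|(E_j)>j$ (possible since $\|\sigma_k\|\to\infty$), put $\mu_j=\sigma_{k_j}\restriction(\omega\setminus E_j)$ and $E_{j+1}=E_j\cup\supp(\mu_j)$. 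Then the supports are pairwise disjoint and $\|\mu_j\|>j$. For a fixed clopen $A\sub\omega$ one has $|\mu_j(A)|\le|\sigma_{k_j}(A)|+2^{-j}\to 0$, and for a fixed clopen $B\cup\{p_F\}$ one writes $\mu_j(B\cup\{p_F\})=\mu_j(\omega)-\mu_j(\omega\setminus B)$, where $\mu_j(\omega\setminus B)\to 0$ by the previous case and $\mu_j(\omega)=\sigma_{k_j}(N_F)-\sigma_{k_j}(E_j)\to 0$; hence $\seqnN{\mu_j}$ is the desired disjointly supported AN-sequence.

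It therefore remains to reduce to an AN-sequence supported in $\omega$, and this is the main obstacle: the mass $a_n:=\nu_n(\{p_F\})$ cannot be discarded naively, because dropping the atom at $p_F$ changes the value of every clopen neighborhood $B\cup\{p_F\}$ by $a_n$, and $a_n$ need not tend to $0$ (indeed for $F=Fr$ the natural AN-sequence $\seqnNbig{n(\delta_n-\delta_{p_{Fr}})}$ has $a_n=-n$). To handle this I would first pass to a subsequence along which $|a_n|$ is monotone. If $a_n\to 0$, then $\nu_n\restriction\omega$ is already an AN-sequence supported in $\omega$ (its values on the sets $B\cup\{p_F\}$ tend to $-a_n\to 0$), and the engine finishes the proof. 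Otherwise $|a_n|$ converges to some $L\in(0,\infty]$, all $a_n$ are eventually nonzero, and I would cancel the atom by pairing: choose $n_k\to\infty$ with $\|\nu_{n_k}\restriction\omega\|$ large, then $m_k>n_k$ large enough that the ratio $c_k:=a_{n_k}/a_{m_k}$ stays bounded (monotonicity of $|a_n|$) and that $|\nu_{m_k}|\big(\supp(\nu_{n_k})\big)<1$ (variation on the fixed finite set $\supp(\nu_{n_k})\cap\omega$ vanishes), and set $\theta_k=\nu_{n_k}-c_k\nu_{m_k}$. Then $\theta_k(\{p_F\})=a_{n_k}-c_k a_{m_k}=0$, so $\theta_k$ is supported in $\omega$; boundedness of $c_k$ together with $n_k,m_k\to\infty$ gives $\theta_k(A)\to 0$ for every clopen $A$; and since $\nu_{m_k}$ barely meets $\supp(\nu_{n_k})$, the mass of $\nu_{n_k}\restriction\omega$ on its own support survives the subtraction, so $\|\theta_k\|\ge\|\nu_{n_k}\restriction\omega\|-O(1)\to\infty$.

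The delicate point, and the step I expect to be the crux, is exactly this simultaneous control in the non-vanishing case: the subtraction $\nu_{n_k}-c_k\nu_{m_k}$ is forced upon us in order to kill the mass at $p_F$, and one must guarantee at once that (i) it does not reintroduce mass at $p_F$, (ii) it keeps all clopen values converging to $0$ — which needs the coefficient $c_k$ bounded, hence the monotone-subsequence reduction — and (iii) it does not annihilate the norm — which needs the two measures to be almost disjointly supported, hence the use of the vanishing of the variation on fixed finite subsets of $\omega$. Once an $\omega$-supported AN-sequence is in hand, the gliding hump above is routine. I would also record, since it makes the index choices feasible, that $\|\nu_n\restriction\omega\|\to\infty$ to begin with: otherwise $|a_n|\to\infty$ along a subsequence, which together with boundedness of $\nu_n\restriction\omega$ would force $|\nu_n(N_F)|\to\infty$, contradicting $\nu_n(N_F)\to 0$.
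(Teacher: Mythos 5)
Your proof is correct, and it is assembled from the same two ingredients as the paper's: a gliding-hump extraction exploiting the vanishing of variation on fixed finite subsets of $\omega$ to disjointify supports, and a cancellation of the atom at $p_F$ achieved by subtracting a bounded scalar multiple of a later measure. The difference is in the organization, and it is genuine. The paper disjointifies \emph{first} (its Step 1 is essentially your "engine", applied to measures that may still charge $p_F$) and only then removes $p_F$; this order pays off in the norm estimate, because once the supports on $\omega$ are disjoint one gets $\|(\theta - c\,\theta')\restriction\omega\| = \|\theta\restriction\omega\| + |c|\cdot\|\theta'\restriction\omega\|$ for free, so no almost-disjointness control is needed in the cancellation step. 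You cancel $p_F$ first, and consequently must impose the extra condition $|\nu_{m_k}|\big(\supp(\nu_{n_k})\cap\omega\big)<1$ to keep the subtraction from destroying the norm — which you supply correctly. In the other direction, your monotone-subsequence reduction buys a uniform treatment of what the paper splits into two cases: the paper handles $\liminf_n|\theta_n(\{p_F\})|<\infty$ by Cauchy differences (coefficient $1$, only approximate cancellation at $p_F$) and $|\theta_n(\{p_F\})|\to\infty$ by exact ratio cancellation with coefficients $|\alpha_n|<1$, whereas your single exact-cancellation argument with $c_k = a_{n_k}/a_{m_k}$ covers both nonzero-limit cases at once, boundedness of $c_k$ coming from monotonicity of $|a_n|$ (ratio at most $1$ in the increasing case, at most $|a_{n_0}|/L$ in the decreasing-to-$L>0$ case). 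Your closing observation that $\|\nu_n\restriction\omega\|\to\infty$, forced by $\nu_n(N_F)\to 0$, is the same estimate the paper uses to get the norm blow-up in its unbounded case.
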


\begin{proof}
Let $\seqnN{\mu_n}$ be an AN-sequence of measures on $N_F$.

\underline{\textbf{Step 1.}}  We will find an AN-sequence $\seqkN{\theta_k}$ on $N_F$ such that $\supp(\theta_k)\cap\supp(\theta_l)\cap\omega=\emptyset$ for every $k\neq l\in\omega$, i.e. with supports disjoint on $\omega$.

We first define inductively a strictly increasing sequence $\seqkN{n_k}$ of natural numbers and an increasing sequence $\seqkN{A_k}$ of subsets of $N_F$ satisfying $A_k=\big[0,\max\big(\bigcup_{i=0}^{k-1}\supp(\mu_{n_i})\cap\omega\big)\big]$ and $|\mu_{n_k}|(A_k) < 1/k$ for every $k\in\omega$. We start with $n_0=0$ and $A_0=\emptyset$. Assume now that for some $k\in\omega_+$ we have defined natural numbers $n_0<\ldots <n_{k-1}$ and subsets $A_0\sub\ldots\sub A_{k-1}$ of $N_F$ as required.
The set $A_k=\big[0,\max\big(\bigcup_{i=0}^{k-1}\supp(\mu_{n_i})\cap\omega\big)\big]$ is a finite subset of $\omega$.
As $\seqnN{\mu_n}$ is an AN-sequence and singletons are clopen, there exists $n_k>n_{k-1}$ such that $|\mu_{n_k}|(A_k) < 1/k$.

We now define a measure \[\theta_k=\mu_{n_k}\restriction(N_F\sm A_k)\] for every $k\in\omega$. The sequence $\seqkN{\theta_k}$ of measures on $N_F$ will meet the requirements, as:
\begin{itemize}
\item for every pair of natural numbers $k<l$ we have $\supp(\theta_k)\cap\omega\sub A_{k+1}$ and $\supp(\theta_l)\cap A_{k+1}=\emptyset$, by the definition of the sequence $\seqkN{A_k}$,
\item for all $k\in\omega_+$ we have \[\|\theta_k\|\geq \|\mu_{n_k}\| - \|\mu_{n_k}\restriction A_k\|>\|\mu_{n_k}\| - 1/k\textrm{, and so }\|\theta_k\|\rightarrow\infty,\]
\item for all $A\in\clopen(N_F)$ we have $\theta_k(A)\rightarrow 0$, as $\mu_{n_k}(A)\rightarrow 0$ and \[ \big|\theta_k(A) - \mu_{n_k}(A)\big|\leq\|\theta_k - \mu_{n_k}\|=\|\mu_{n_k}\restriction A_k\|<1/k \rightarrow 0. \]
\end{itemize}

\underline{\textbf{Step 2.}}  In Step $1$ we constructed an AN-sequence $\seqnN{\theta_n}$ on $N_F$ with supports disjoint on $\omega$, and we will now ``remove'' the point $p_F$ from the supports. We have two cases.
\begin{enumerate}[a)]
\item $\liminf_{n\to\infty}\big|\theta_n(\{p_F\})\big| < \infty$:

We can find a subsequence $\seqkN{\theta_{n_k}}$ such that $\theta_{n_k}\big(\{p_F\}\big)\to\alpha$ for some $\alpha\in\er$, hence $\seqkNbig{\theta_{n_k}\big(\{p_F\}\big)}$ is a Cauchy sequence. We define a sequence $\seqkN{\nu_k}$ of measures by setting
\[ \nu_k=(\theta_{n_{2k}} - \theta_{n_{2k+1}})\restriction\omega \]
for every $k\in\omega$. Then, $\seqkN{\nu_k}$ is a disjointly supported AN-sequence on $N_F$, as:
\begin{itemize}
\item for every $k\in\omega$ we have\[\supp(\nu_k)=\big(\supp(\theta_{n_{2k}})\cup \supp(\theta_{n_{2k+1}})\big)\cap\omega,\] thus $\supp(\nu_k)\cap\supp(\nu_l)=\emptyset$ for $k\neq l$ as $\theta_n$'s have disjoint supports on $\omega$,
\item we have $\|\nu_k\| = \|\theta_{n_{2k}}\restriction\omega\| + \|\theta_{n_{2k+1}}\restriction\omega\|\rightarrow\infty$, because
$\|\theta_{n_{l}}\restriction\omega\|\rightarrow\infty$ (which follows from $\|\theta_{n_{l}}\|\rightarrow\infty$ and $\|\theta_{n_{l}}\restriction\{p_F\}\|\rightarrow|\alpha|$),
\item  for all $A\in\clopen(N_F)$ we have:
\[ \nu_k(A)=\big(\theta_{n_{2k}} - \theta_{n_{2k+1}}\big)(A\cap\omega)= \big(\theta_{n_{2k}} - \theta_{n_{2k+1}}\big)(A) - \big(\theta_{n_{2k}} - \theta_{n_{2k+1}}\big)(A\cap\{p_F\})\rightarrow 0,\]
as $\seqkN{\theta_{n_k}}$ is an AN-sequence and $\theta_{n_{2k}}(\{p_F\}) - \theta_{n_{2k+1}}(\{p_F\})\rightarrow 0$ holds because $\seqkNbig{\theta_{n_k}\big(\{p_F\}\big)}$ is Cauchy.
\end{itemize}

\item $\lim_{n\to\infty}\big|\theta_n(\{p_F\})\big| = \infty$:

Without losing of generality we may assume that the sequence $\seqnNbig{\big|\theta_n(\{p_F\})\big|}$ is strictly increasing. If we denote
\[\alpha_n=\frac{\theta_{2n}(\{p_F\})}{\theta_{2n+1}(\{p_F\})},\] 
then we get that $|\alpha_n|<1$ for all $n\in\omega$.
We define a sequence $\seqnN{\nu_n}$ of measures by setting
\[ \nu_n=(\theta_{2n} - \alpha_n\cdot\theta_{2n+1})\restriction\omega\] for every $n\in\omega$.
Then, $\seqnN{\nu_n}$ is a disjointly supported AN-sequence on $N_F$, as:
\begin{itemize}
\item for every $n\in\omega$ we have \[\supp(\nu_n)=\big(\supp(\theta_{2n})\cup \supp(\theta_{2n+1})\big)\cap\omega,\] thus $\supp(\nu_k)\cap\supp(\nu_l)=\emptyset$ for $k\neq l$ as $\theta_n$'s have disjoint supports on $\omega$,
\item we have $\|\nu_n\| = \|\theta_{2n}\restriction\omega\| + |\alpha_n|\cdot\|\theta_{2n+1}\restriction\omega\|\rightarrow\infty$, because for every $k\in\omega$ it holds
\[ \|\theta_k\restriction\omega\|\geq|\theta_k(\omega)|=|\theta_k(N_F) - \theta_k(\{p_F\})|\rightarrow\infty,\]
which follows from $\theta_k(N_F)\rightarrow 0$ and the assumption $\big|\theta_k(\{p_F\})\big|\rightarrow\infty$,
\item  for all $A\in\clopen(N_F)$ we have:
\[ \nu_n(A)=\big(\theta_{2n} - \alpha_n\cdot\theta_{2n+1}\big)(A\cap\omega)= \big(\theta_{2n} - \alpha_n\cdot\theta_{2n+1}\big)(A) - \big(\theta_{2n} - \alpha_n\cdot\theta_{2n+1}\big)(A\cap\{p_F\})\rightarrow 0,\]
as $\theta_{2n}(\{p_F\}) - \alpha_n\cdot\theta_{2n+1}(\{p_F\})= 0$, $\seqnN{\theta_n}$ is an AN-sequence and $\seqnN{\alpha_n}$ is bounded.
\end{itemize}
\end{enumerate}
\end{proof}

We will now characterize when a sequence of finitely supported measures on a space $N_F$ is an AN-sequence.

\begin{proposition}\label{prop_characterization}
Let $F$ be a filter on $\omega$ and $\seqnN{\mu_n}$ a sequence of finitely supported measures on $N_F$. Then $\seqnN{\mu_n}$ is an AN-sequence on $N_F$ if and only if the following three conditions simultaneously hold:
\begin{enumerate}[(1)]
\item $\lim_{n\to\infty}\|\mu_n\|=\infty$,
\item $\lim_{n\to\infty}\mu_n(N_F)= 0$,
\item $\lim_{n\to\infty}\big\|\mu_n\restriction(\omega\sm A)\big\|=0$ for every $A\in F$.
\end{enumerate}
\end{proposition}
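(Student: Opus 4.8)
The plan is to begin by pinning down $\clopen(N_F)$, on which both directions rest. A set $B\sub N_F$ with $p_F\notin B$ is clopen if and only if $B\sub\omega$ and $\omega\sm B\in F$, while a set $B$ with $p_F\in B$ is clopen if and only if $B\cap\omega\in F$. Two consequences will be used repeatedly: every finite subset of $\omega$ is clopen (its complement is cofinite, hence lies in $Fr\sub F$), and, more importantly, whenever $A\in F$ \emph{every} subset $D\sub\omega\sm A$ is clopen, since $N_F\sm D\supseteq A\in F$. Moreover $N_F$ itself is clopen, and for $A\in F$ the set $\omega\sm A$ is clopen with complement $A\cup\{p_F\}$.

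Given this, the backward implication is immediate. Assuming (1)--(3), condition (1) is the first requirement of an AN-sequence, so it remains to verify $\mu_n(B)\to 0$ for every clopen $B$. If $p_F\notin B$, set $A=\omega\sm B\in F$; then $|\mu_n(B)|=|\mu_n(\omega\sm A)|\leq\|\mu_n\restriction(\omega\sm A)\|\to 0$ by (3). If $p_F\in B$, set $A=B\cap\omega\in F$; then $N_F\sm B=\omega\sm A$, so $\mu_n(B)=\mu_n(N_F)-\mu_n(\omega\sm A)$, and both terms vanish by (2) and (3). For the forward implication, assume $\seqnN{\mu_n}$ is an AN-sequence. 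Condition (1) is part of the definition and (2) follows because $N_F$ is clopen. It remains to prove (3). Since each singleton $\{x\}$ ($x\in\omega$) is clopen, $\mu_n(\{x\})\to 0$, and therefore $\|\mu_n\restriction S\|=\sum_{x\in S}|\mu_n(\{x\})|\to 0$ for every \emph{fixed finite} $S\sub\omega$; this ``escape of mass to infinity'' is what drives the construction below.

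Now fix $A\in F$ and suppose toward a contradiction that $\|\mu_n\restriction(\omega\sm A)\|\not\to 0$. Writing $C_n^{+}$ and $C_n^{-}$ for the positive, resp.\ negative, atoms of $\mu_n$ lying in $\omega\sm A$, we have $\|\mu_n\restriction(\omega\sm A)\|=\mu_n(C_n^{+})-\mu_n(C_n^{-})$, so after passing to a subsequence and possibly replacing $\mu_n$ by $-\mu_n$ we may fix $\eps>0$ with $\mu_n(C_n^{+})\geq\eps$ for all $n$. The heart of the argument is a gliding hump construction yielding a \emph{single} clopen set on which the masses do not vanish. Build inductively indices $n_1<n_2<\cdots$ and pairwise disjoint finite sets $F_k\sub\omega\sm A$: having chosen $n_1,\dots,n_{k-1}$, let $S_{k-1}=\bigcup_{i<k}\supp(\mu_{n_i})\cap\omega$, a fixed finite set; choose $n_k$ so large that $\|\mu_{n_k}\restriction S_{k-1}\|<\eps/4$, and put $F_k=C_{n_k}^{+}\sm S_{k-1}$. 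Then $\mu_{n_k}(F_k)\geq\mu_{n_k}(C_{n_k}^{+})-\|\mu_{n_k}\restriction S_{k-1}\|\geq 3\eps/4$; the $F_k$ are pairwise disjoint (each $F_j\sub S_{k-1}$ for $j<k$, while $F_k\cap S_{k-1}=\emptyset$); and for $j>k$ we have $F_j\cap\supp(\mu_{n_k})=\emptyset$, so $\mu_{n_k}(F_j)=0$. Set $D=\bigcup_k F_k$, which is clopen since $D\sub\omega\sm A$. Then for every $k$,
\[
\mu_{n_k}(D)=\mu_{n_k}(F_k)+\sum_{j<k}\mu_{n_k}(F_j)+\sum_{j>k}\mu_{n_k}(F_j)\geq\tfrac{3\eps}{4}-\|\mu_{n_k}\restriction S_{k-1}\|+0>\tfrac{\eps}{2},
\]
using $\bigcup_{j<k}F_j\sub S_{k-1}$. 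This contradicts $\mu_n(D)\to 0$ for the clopen $D$, so (3) holds.

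The main obstacle is precisely condition (3) in the forward direction: clopenness only delivers $\mu_n(D)\to 0$ for each \emph{fixed} $D\sub\omega\sm A$, whereas $\|\mu_n\restriction(\omega\sm A)\|$ is the supremum of $|\mu_n(D)|$ over all such $D$, so one must upgrade pointwise-in-$D$ convergence to uniformity. The gliding hump succeeds because $A\in F$ renders every subset of $\omega\sm A$ clopen, letting the ``bad'' finite humps $F_k$ be amalgamated into one admissible set $D$, while the pointwise decay $\mu_n(\{x\})\to 0$ keeps the overlap of the humps under control.
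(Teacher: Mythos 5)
Your proof is correct. The backward implication is identical to the paper's. In the forward direction, both arguments hinge on the same key geometric fact — for $A\in F$, \emph{every} subset of $\omega\sm A$ is clopen, so finitely many ``bad'' finite pieces extracted from a subsequence can be amalgamated into a single clopen witness whose measures do not vanish — but you handle the technical issue of overlapping supports differently. The paper first invokes its Proposition \ref{prop_disjoint} (an AN-sequence can be replaced by a disjointly supported one) so that for $F=\bigcup_k F_k$ one gets $\mu_{n_k}(F)=\mu_{n_k}(F_k)$ with no interference terms at all; your proof instead runs a self-contained gliding hump, using only the observation that $\mu_n(\{x\})\to 0$ on singletons forces $\|\mu_n\restriction S\|\to 0$ for every fixed finite $S$, and then choosing each hump $F_k$ to avoid the previous supports while keeping $\|\mu_{n_k}\restriction S_{k-1}\|<\eps/4$ to control the back-interference. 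What each buys: the paper's route is shorter on the page because the overlap problem has been outsourced to Proposition \ref{prop_disjoint} (which itself requires a two-step case analysis to remove the mass at $p_F$); your route is longer inline but logically lighter — it makes Proposition \ref{prop_characterization} independent of the disjointification result, and your estimates (the $3\eps/4$ versus $\eps/4$ bookkeeping, the vanishing of $\mu_{n_k}(F_j)$ for $j>k$) are all verified correctly. Your sign normalization via $\pm\mu_n$ is harmless, though not strictly needed: since the interference from earlier humps is below $\eps/4$ in absolute value, the bound $|\mu_{n_k}(D)|\geq|\mu_{n_k}(F_k)|-\eps/4$ would suffice even with alternating signs.
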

\begin{proof}
Let $\seqnN{\mu_n}$ be an AN-sequence on $N_F$. Then, $(1)$ and $(2)$ hold by the definition. By Proposition \ref{prop_disjoint}, without losing of generality we may assume that the sequence $\seqnN{\mu_n}$ is disjointly supported. Assume, for the sake of contradiction, that for some $A\in F$ there is a subsequence $\seqkN{\mu_{n_k}}$ such that $\lim_{k\to\infty} \big\|\mu_{n_k}\restriction(\omega\sm A)\big\| = C$ for some $C>0$. We may additionally assume that $\big\|\mu_{n_k}\restriction(\omega\sm A)\big\| \geq C/2$ for every $k\in\omega$. For each $k\in\omega$ there exists $F_k\sub\supp\big(\mu_{n_k}\restriction(\omega\sm A)\big)$ satisfying $|\mu_{n_k}(F_k)| \geq C/4$. Since $\omega\sm A$ is a discrete clopen subspace of $N_F$, the set $F=\bigcup_{k\in\omega}F_k$ is clopen in $N_F$. But $\lim_{k\to\infty}|\mu_{n_k}(F)|\geq C/4$, contradicting the fact that $\seqkN{\mu_{n_k}}$ is an AN-sequence on $N_F$.


Assume now that $\seqnN{\mu_n}$ is a sequence of finitely supported measures on $N_F$ satisfying conditions $(1)$--$(3)$. If $C\in\clopen(N_F)$ is such that $p_{F}\notin C$, then it must be of the form $\omega\sm A$ for some $A\in F$, and so $\mu_n(C)\rightarrow 0$ by $(3)$. On the other hand, if $C\in\clopen(N_F)$ is such that $p_F\in C$, then we already know that $\mu_n(N_F\sm C)\rightarrow 0$, thus $\mu_n(C)=\mu_n(N_F)-\mu_n(N_F\sm C)\rightarrow 0$, by $(2)$.
\end{proof}

\section{The Nikodym property and submeasures on $\omega$} \label{section_ideals}

In this section we prove our main results about filters $F$ without the Nikodym property and their corresponding dual ideals. We will first characterize them using a sequences of non-negative measures on $\omega$, and next using particular ideals on $\omega$ associated with submeasures. Theorems \ref{thm_positive_measures} and \ref{thm_AN_exh}, and their proofs, were strongly inspired by similar characterizations of filters with the BJNP due to Marciszewski and Sobota \cite[Theorem B and Theorem 6.2]{MS}.

\theorempositivemeasures

\begin{proof}
Assume that the filter $F$ does not have the Nikodym property. By Proposition \ref{prop_disjoint}, there is an AN-sequence $\seqnN{\theta_n}$ of measures on $N_F$ with pairwise disjoint supports contained entirely in $\omega$. Let us define a measure $\mu_n=|\theta_n|$ for each $n\in\omega$. It follows that each $\mu_n$ is non-negative and finitely supported, $\mu_n(\omega)=\|\theta_n\|\rightarrow\infty$, and, by Proposition $\ref{prop_characterization}.(3)$, that
 \[ \lim_{n\to\infty}\,\mu_n(\omega\sm A)= \lim_{n\to\infty}\,\big\|\theta_n\restriction(\omega\sm A)\big\|=0\]
for every $A\in F$. Thus, conditions $(1)$--$(3)$ are satisfied.

Assume now that there is a sequence $\seqnN{\mu_n}$ of finitely supported non-negative measures on $N_F$ satisfying conditions $(1)$--$(3)$. For each $n\in\omega$ let us define a measure:
\[\nu_n = \mu_n(\omega)\cdot\delta_{p_F}-\mu_n.\]
By Proposition \ref{prop_characterization}, it is immediate that $\seqnN{\nu_n}$ is an AN-sequence on $N_F$.
\end{proof}
\noindent Note that conditions $(2)$ and $(3)$ in the above theorem imply $\mu_n(A)\to\infty$ for $A\in F$.

Let us recall some standard definitions concerning submeasures on $\omega$. A function $\varphi\colon\PP(\omega)\to[0,\infty]$ is a \textit{submeasure} if $\varphi(\emptyset)=0$, $\varphi(\{n\})<\infty$ for every $n\in\omega$, $\varphi(X)\leq\varphi(Y)$ whenever $X\sub Y$, and $\varphi(X\cup Y)\leq\varphi(X)+\varphi(Y)$ for every $X, Y$. A submeasure $\varphi$ on $\omega$ is \textit{lower semicontinuous (lsc)} if $\varphi(A)=\lim_{n\to\infty}\varphi(A\cap [0,n])$ for every $A\sub\omega$. 
We consider the following ideal associated with an lsc submeasure $\varphi$ on $\omega$:
\[\exh(\varphi)=\big\{A\sub\omega\colon\lim_{n\to\infty} \varphi(A\sm [0,n])=0\big\},\]
called the \textit{exhaustive ideal} of $\varphi$.
It is not difficult to show that $\exh(\varphi)$ is an $F_{\sigma\delta}$ P-ideal for every lsc submeasure $\varphi$. There following result of Solecki characterizes analytic P-ideals on $\omega$ in terms of submeasures.

\begin{theorem}[Solecki \cite{Sol}]
Let $\II$ be an ideal on $\omega$ such that $Fin\sub\II$. Then $\II$ is an analytic P-ideal if and only if there is a lsc submeasure $\varphi$ such that $\II=\exh(\varphi)$.
\end{theorem}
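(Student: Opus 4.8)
\emph{Proof proposal.} The backward implication is the routine one and is essentially recorded in the discussion preceding the statement: for an lsc submeasure $\varphi$ the set $\exh(\varphi)$ is an $F_{\sigma\delta}$ P-ideal. Indeed, writing $\exh(\varphi)=\bigcap_{k}\bigcup_{N}\{A:\varphi(A\sm[0,N])\le 1/k\}$ and using that lower semicontinuity of $\varphi$ makes each map $A\mapsto\varphi(A\sm[0,N])$ lower semicontinuous (so each inner set is closed in $2^\omega$) yields the $F_{\sigma\delta}$ bound; the P-ideal property follows by the standard diagonalization, choosing for $A_n\in\exh(\varphi)$ integers $N_n$ with $\varphi(A_n\sm[0,N_n])<2^{-n}$ and setting $A=\bigcup_n(A_n\sm[0,N_n])$, whose tails are controlled by subadditivity so that $A\in\exh(\varphi)$ and $A_n\sub^* A$. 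So only the forward implication requires genuine work.

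For the forward implication, let $\II$ be an analytic P-ideal with $Fin\sub\II$; the plan is to produce an lsc submeasure $\varphi$ with $\exh(\varphi)=\II$ by exploiting that $\II$ is a subgroup of the Polish group $(2^\omega,\triangle)$. First I would show that the P-ideal structure upgrades the inherited subspace topology to a finer \emph{Polish group topology} $\tau$ on $\II$, compatible with $\triangle$. The heart of the matter is this \emph{Polishability}: using that $\II$ is analytic together with the P-property (every countable subfamily of $\II$ has a pseudo-union in $\II$), one produces a complete, translation-invariant metric $d\le 1$ generating such a $\tau$; equivalently, one invokes the general principle that an analytic Polishable subgroup of a Polish group carries a (unique) Polish group topology.

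Granting the metric $d$, I would then set $\rho(A)=d(A,\emptyset)$, pass to its monotone hull $\tilde\rho(s)=\sup\{\rho(t):t\sub s\}$ on finite sets (which is monotone and, by the triangle inequality and invariance of $d$, subadditive), and define $\varphi(A)=\lim_{n\to\infty}\tilde\rho(A\cap[0,n])=\sup_n\tilde\rho(A\cap[0,n])$. By construction $\varphi$ is a monotone, subadditive, lower semicontinuous submeasure with $\varphi(\{k\})<\infty$ (singletons lie in $Fin\sub\II$). It then remains to check $\exh(\varphi)=\II$. For $\II\sub\exh(\varphi)$: if $A\in\II$ its finite truncations converge to $A$ in $\tau$, i.e. $\rho(A\sm[0,n])\to 0$, and monotonicity of $\tilde\rho$ propagates this to $\varphi(A\sm[0,n])\to 0$. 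For $\exh(\varphi)\sub\II$: if $A\in\exh(\varphi)$ then the truncations $A\cap[0,n]\in Fin\sub\II$ form a $d$-Cauchy sequence, since for $m>n$ one has $d(A\cap[0,n],A\cap[0,m])=\rho\big(A\cap(n,m]\big)\le\varphi(A\sm[0,n])\to 0$; by completeness of $d$ they converge in $\tau$ to an element of $\II$, and because $\tau$ refines the $2^\omega$-topology this limit must be $A$ itself, whence $A\in\II$.

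The main obstacle is the Polishability step, which is the real content of Solecki's theorem: turning the bare analyticity of $\II$, together with the P-property, into a complete invariant metric is where the descriptive-set-theoretic work lies, and it is precisely here that one must use that $\II$ is not merely analytic but a P-ideal (an arbitrary analytic ideal need not admit such a topology). Once the metric is available, the submeasure construction and the two inclusions above are comparatively mechanical; the only points needing care are that the monotonization does not destroy subadditivity, and that completeness together with the refinement of topologies is invoked correctly so that members of $\exh(\varphi)$ land back inside $\II$.
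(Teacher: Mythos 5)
The paper does not prove this statement at all---it is quoted as a known theorem with the citation to Solecki---so there is no in-paper argument to compare yours against; your proposal has to stand on its own, and it does not. The decisive problem is that you never prove the one thing that constitutes the theorem's content: that an \emph{analytic} P-ideal is Polishable. You correctly identify this as ``the heart of the matter,'' but then dispose of it by saying one ``invokes the general principle that an analytic Polishable subgroup of a Polish group carries a (unique) Polish group topology.'' That principle is circular here: \emph{Polishable} means, by definition, that such a group topology exists; the general principle only gives uniqueness, not existence. Nowhere in your outline is the analyticity of $\II$, or the P-property, actually used in an argument---they are only mentioned as the hypotheses that ``one must use.'' So the proposal reduces Solecki's theorem to its hardest step and then asserts that step; this is a genuine gap, not a fixable technicality.

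There are also errors in the part you call ``comparatively mechanical.'' For $\II\sub\exh(\varphi)$ you claim that $\rho(A\sm[0,n])=d(A\sm[0,n],\emptyset)\to 0$ and that ``monotonicity of $\tilde\rho$ propagates this to $\varphi(A\sm[0,n])\to 0$.'' That inference is invalid: $\varphi(A\sm[0,n])=\sup\{\rho(t)\colon t\sub A\sm[0,n],\ t\ \text{finite}\}$, and since $\rho$ itself is not monotone, smallness of $\rho$ on the tail says nothing about $\rho$ on arbitrary subsets of the tail. One can repair this, but only with a real argument: for $A\in\II$ the map $2^\omega\to(\II,\tau)$, $X\mapsto A\cap X$, is a homomorphism from the Polish group $(2^\omega,\triangle)$ with closed graph, hence Borel, hence continuous by Pettis' theorem; so $\{B\colon B\sub A\}$ is $\tau$-compact, the decreasing compact sets $\{B\colon B\sub A\sm[0,n]\}$ have intersection $\{\emptyset\}$, and therefore $\sup_{B\sub A\sm[0,n]}d(B,\emptyset)\to 0$. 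The same Pettis-type argument is needed even for your unproved assertion that the finite truncations $A\cap[0,n]$ converge to $A$ in $\tau$---this is not automatic for an abstract Polish group topology refining the Cantor topology. (Your backward direction, and the subadditivity of the monotone hull $\tilde\rho$ via invariance of $d$, are fine; and the invariant metric you want is indeed complete, since on an abelian Polish group every compatible invariant metric is complete.) In summary: the architecture you describe matches the known equivalence between Polishability and representation as $\exh(\varphi)$, but as a proof it is missing its core and misjustifies one of the two inclusions.
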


For submeasures $\varphi$ and $\psi$ on $\omega$ we write $\psi\leq\varphi$ if $\psi(A)\leq\varphi(A)$ for every $A\sub\omega$. Following Farah \cite[page 21]{Far}, we call a submeasure $\varphi$ \textit{non-pathological} if for every $A\sub\omega$ we have:
\[ \varphi(A)=\sup \{\mu(A)\colon \mu\textrm{ is a non-negative measure on } \omega \textrm{ such that } \mu\leq\varphi\}.\]
Trivially, every non-negative measure on $\omega$ is non-pathological. 

The family of density submeasures is an important subfamily of non-pathological lsc submeasures. Recall that a submeasure $\varphi$ on $\omega$ is a \textit{density submeasure} if there exists a sequence $\seqnN{\mu_n}$ of finitely supported non-negative measures on $\omega$ with pairwise disjoint supports
such that 
\[\varphi = \sup_{n\in\omega}\mu_n.\]
An ideal $\II$ on $\omega$ is a \textit{density ideal} if there is a density submeasure $\varphi$ such that $\II=\exh(\varphi)$. We say in this situation that $\II$ is \textit{generated by the sequence} $\seqnN{\mu_n}$. Note that $X\in\exh(\varphi)$ if and only if 
$\lim_{n\to\infty}\mu_n(X)=0$.

A prototypical ideal for the class of density ideals is the \textit{(asymptotic) density zero} ideal $\ZZ = \exh(\varphi_d)$, where the \textit{asymptotic density submeasure} $\varphi_d$ is defined for every $A\sub\omega$ by:
\[\varphi_d(A)=\sup_{n\in\omega}\frac{\big|A\cap[2^n, 2^{n+1})\big|}{2^n}.\]

We are now ready to provide a characterization of the Nikodym property of filters in terms of exhaustive ideals.
\theoremANexh

\begin{proof}
$(1)\Rightarrow(2)$ Assume that $F$ does not have the Nikodym property. Let $\seqnN{\mu_n}$ be a sequence of finitely supported non-negative measures on $N_F$ as in Theorem \ref{thm_positive_measures}.
By the remark after the theorem, we may additionally assume that $\mu_n$'s have pairwise disjoint supports such that $\max\big(\supp(\mu_n)\big) < \min\big(\supp(\mu_{n+1})\big)$ for every $n\in\omega$. 
For each $A\sub\omega$ define:
\[\varphi(A)=\sup_{n\in\omega}\mu_n(A).\]

Then $\varphi$ is a density submeasure. We have $\varphi(\omega)=\sup_{n\in\omega}\mu_n(\omega)=\infty$ by condition $(2)$ from Theorem \ref{thm_positive_measures}.
It is fairly easy to see that $F\sub\exh(\varphi)^*$ (cf. \cite[Theorem 6.2]{MS}).

$(2)\Rightarrow(3)$ Obvious.

$(3)\Rightarrow(1)$ Assume that there is a non-pathological lsc submeasure $\varphi$ such that $\varphi(\omega)=\infty$ and $F\sub\exh(\varphi)^*$. By Remark \ref{rem_subfilter}, it is enough to prove that $\exh(\varphi)^*$ does not have the Nikodym property.

As $\varphi$ is finite on finite sets and $\varphi(\omega)=\infty$, by the subadditivity of $\varphi$ for every $n\in\omega$ we get: 
\begin{equation}\label{infinite} \tag{$\ast$}
\varphi\big(\omega\sm [0,n]\big)=\infty.
\end{equation}
Let $n_0 = 0$. Since $\varphi$ is lower semi-continuous, there exists $n_1>n_0$ such that $\varphi([n_0,n_1])>2$. By (\ref{infinite}) we have $\varphi(\omega\sm [0,n_1])=\infty$, so again there is $n_2>n_1$ such that  $\varphi([n_1,n_2])>3$. We continue in this way until we get a strictly increasing sequence $\seqkN{n_k}$ of natural numbers
satisfying for every $k\in\omega$ the inequality
\[\varphi\big([n_k,n_{k+1}]\big)>k+1.\]

The submeasure $\varphi$ is non-pathological, so for each $k\in\omega$ there exists a non-negative measure $\mu_k$ on $\omega$ such that $\mu_k\leq\varphi$, $\supp(\mu_k)\sub[n_k,n_{k+1}]$, and
\[\mu_k\big([n_k,n_{k+1}]\big)>k.\]
Measures $\mu_k$ are finitely supported, and $\lim_{k\to\infty}\mu_k(\omega)\geq\lim_{k\to\infty}\mu_k\big([n_k,n_{k+1}]\big)=\infty$.

We claim that the sequence $\seqkN{\mu_k}$ satisfies the equality $\lim_{k\to\infty} \mu_k(A) = 0$ for every $A\in\exh(\varphi)$, and thus, by Theorem \ref{thm_positive_measures}, the filter $\exh(\varphi)^*$ does not have the Nikodym property. Let $A\in\exh(\varphi)$ and $\eps>0$.
Since $\lim_{n\to\infty}\varphi\big(A\sm [0,n]\big)=0$, there is $M\in\omega$ such that $\varphi\big(A\sm [0,M]\big)<\eps$. Let $k\in\omega$ be such that $n_k>M$. It holds:
\[\mu_k(A)=\mu_k\big(A\cap[n_k, n_{k+1}]\big)\leq\varphi\big(A\cap[n_k, n_{k+1}]\big)\leq \varphi\big(A\sm [0,M]\big)<\eps,\]
hence $\lim_{k\to\infty} \mu_k(A) = 0$.
\end{proof}

Recall that by classical theorem of Sierpiński (see \cite[Theorem 4.1.1]{BJ}), every ideal of the form $\exh(\varphi)$ for some lsc submeasure $\varphi$ is meager and of measure zero (as it is a Borel subset of $\cantor$), and every free ultrafilter on $\omega$ is non-meager. Therefore, we get the following corollaries.

\begin{corollary}\label{ultrafilter}
If a filter $F$ on $\omega$ does not have the Nikodym property, then $F$ is meager and of measure zero. In particular, if $F$ is a free ultrafilter on $\omega$, then $F$ has the Nikodym property.
\end{corollary}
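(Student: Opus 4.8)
The plan is to read off the conclusion from Theorem \ref{thm_AN_exh} together with the cited result of Sierpiński. Suppose $F$ does not have the Nikodym property. By the implication $(1)\Rightarrow(2)$ of Theorem \ref{thm_AN_exh}, there is a density submeasure $\varphi$ on $\omega$ with $\varphi(\omega)=\infty$ and $F\sub\exh(\varphi)^*$; in particular $F$ is contained in the filter $\exh(\varphi)^*$ dual to the exhaustive ideal of $\varphi$. So the whole task reduces to transferring the properties ``meager'' and ``measure zero'' from the ideal $\exh(\varphi)$, where Sierpiński's theorem supplies them, first to its dual filter and then to the subfamily $F$.

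For the passage from the ideal to its dual filter, I would identify $\PP(\omega)$ with $\cantor$ via characteristic functions and use the complementation map $A\mapsto\omega\sm A$. Acting coordinatewise by exchanging $0$ and $1$, this map is a self-homeomorphism of $\cantor$ that preserves the standard product measure. Since it carries $\exh(\varphi)$ exactly onto $\exh(\varphi)^*$, and $\exh(\varphi)$ is both meager and of measure zero by Sierpiński's theorem, the image $\exh(\varphi)^*$ inherits both properties.

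It then only remains to observe that meagerness and measure zero are inherited by subsets: from $F\sub\exh(\varphi)^*$ we conclude that $F$ itself is meager and of measure zero, which is the first assertion. For the ``in particular'' clause I would argue by contraposition using the second cited fact, namely that every free ultrafilter on $\omega$ is non-meager. Such a filter can therefore not be meager, hence by the first assertion it cannot fail the Nikodym property, so it must have it.

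I do not expect any genuine obstacle here: the only point that deserves an explicit word is the step from $\exh(\varphi)$ to $\exh(\varphi)^*$, and that becomes immediate once one notes that complementation is a measure-preserving homeomorphism of $\cantor$; everything else is either quoted from Theorem \ref{thm_AN_exh} and Sierpiński's theorem or is a trivial hereditary property of the ideals of meager and null sets.
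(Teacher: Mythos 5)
Your proposal is correct and follows essentially the same route as the paper, which derives the corollary from Theorem \ref{thm_AN_exh} together with Sierpi\'nski's theorem that ideals of the form $\exh(\varphi)$ are meager and of measure zero, plus the non-meagerness of free ultrafilters. The only detail you add beyond the paper's (very terse) argument is the explicit observation that complementation is a measure-preserving self-homeomorphism of $\cantor$ carrying $\exh(\varphi)$ onto $\exh(\varphi)^*$, which the paper leaves implicit.
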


It appears that the notion of a totally bounded ideal is closely related to the Nikodym property of filters. 

\begin{definition}[\hernandez\textrm{ and }\hrusak\,\cite{HHH}]
An analytic P-ideal $\II$ on $\omega$ is said to be \textit{totally bounded} if whenever $\varphi$ is a lower semicontinuous submeasure on $\omega$ for which $\II=\exh(\varphi)$, then $\varphi(\omega)<\infty$.
\end{definition}

A typical example of a totally bounded ideal is $\ZZ$ (\cite[Proposition 3.18]{HHH}), whereas, e.g., no summable ideal is totally bounded (see Section $7.1$). We will study totally bounded ideals further in the context of density ideals in the next section.

\begin{corollary}\label{tot_bounded}
Let $\II=\exh(\varphi)$ for an lsc submeasure $\varphi$ on $\omega$.
If $\II$ is totally bounded, then $\II^*$ has the Nikodym property.
\end{corollary}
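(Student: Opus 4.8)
The plan is to argue by contraposition, using Theorem \ref{thm_AN_exh} to convert a hypothetical failure of the Nikodym property into a second submeasure that can be merged with $\varphi$. So suppose that $\II=\exh(\varphi)$ is totally bounded but, contrary to the claim, $\II^*$ does \emph{not} have the Nikodym property. Applying the implication $(1)\Rightarrow(2)$ of Theorem \ref{thm_AN_exh} to the filter $F=\II^*$, I obtain a density submeasure $\psi$ on $\omega$ with $\psi(\omega)=\infty$ and $\II^*\sub\exh(\psi)^*$. Since for families on $\omega$ the inclusion $S\sub T$ is equivalent to $S^*\sub T^*$, and $(\II^*)^*=\II$ while $(\exh(\psi)^*)^*=\exh(\psi)$, this last inclusion dualizes to $\II\sub\exh(\psi)$.

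Next I would combine the two submeasures by setting $\chi=\max(\varphi,\psi)$, and verify that $\chi$ is again an lsc submeasure on $\omega$: it vanishes on $\emptyset$, is finite on singletons, and is monotone, while subadditivity follows from the elementary inequality $\max(a+b,c+d)\leq\max(a,c)+\max(b,d)$. Lower semicontinuity holds because the sequences $n\mapsto\varphi(A\cap[0,n])$ and $n\mapsto\psi(A\cap[0,n])$ are both nondecreasing, so the limit of their pointwise maximum equals the maximum of their limits, i.e.\ $\chi(A)=\lim_n\chi(A\cap[0,n])$.

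The key computation is that $\exh(\chi)=\exh(\varphi)\cap\exh(\psi)$: indeed $A\in\exh(\chi)$ means $\max\big(\varphi(A\sm[0,n]),\psi(A\sm[0,n])\big)\to 0$, which is equivalent to both tails tending to $0$ separately. Using the inclusion $\II=\exh(\varphi)\sub\exh(\psi)$ obtained above, this yields $\exh(\chi)=\II\cap\exh(\psi)=\II$. Finally, $\chi(\omega)=\max\big(\varphi(\omega),\psi(\omega)\big)\geq\psi(\omega)=\infty$, so $\chi$ is an lsc submeasure with $\exh(\chi)=\II$ yet $\chi(\omega)=\infty$, contradicting the total boundedness of $\II$. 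This completes the contrapositive.

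I do not expect any serious obstacle, since once Theorem \ref{thm_AN_exh} is in hand the argument reduces to the observation that the pointwise maximum of submeasures intersects the associated exhaustive ideals while preserving all the submeasure axioms. The only points needing a little care are checking that $\max(\varphi,\psi)$ is genuinely an lsc submeasure and that its total mass remains infinite; the one conceptual step is recognizing that the hypothesis $\II\sub\exh(\psi)$ (rather than equality) is precisely what makes the intersection $\exh(\varphi)\cap\exh(\psi)$ collapse back to $\II$.
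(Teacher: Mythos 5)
Your argument is correct and is essentially identical to the paper's own proof: both apply Theorem \ref{thm_AN_exh} to extract a submeasure $\psi$ with $\psi(\omega)=\infty$ and $\exh(\varphi)\sub\exh(\psi)$, form the pointwise maximum $\max(\varphi,\psi)$, and observe that its exhaustive ideal equals $\II$ while its total mass is infinite, contradicting total boundedness. The only difference is that you spell out the routine verifications (that the maximum is an lsc submeasure, and that $\exh(\max(\varphi,\psi))=\exh(\varphi)\cap\exh(\psi)$) which the paper leaves implicit.
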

\begin{proof}

If $\II^*$ does not have the Nikodym property, then by Theorem \ref{thm_AN_exh} there is an lsc submeasure $\psi$ on $\omega$ such that $\psi(\omega)=\infty$ and $\exh(\varphi)\sub\exh(\psi)$.
We define an lsc submeasure on $\omega$ by $\varphi'=\max(\psi, \varphi)$. Of course, we have $\varphi'(\omega)=\infty$ and $\exh(\varphi')\sub\exh(\varphi)$. Now, if $A\in\exh(\varphi)$, then also $A\in\exh(\psi)$, so 
\[\lim_{n\to\infty}\varphi'(A\sm [0,n])=\max\Big(\lim_{n\to\infty}\varphi(A\sm [0,n]), \lim_{n\to\infty}\psi(A\sm [0,n])\Big)=0, \]
and hence $\exh(\varphi)\sub\exh(\varphi')$. Thus we have $\exh(\varphi)=\exh(\varphi')$, and so $\II$ is not totally bounded.

\end{proof}

\begin{remark}\label{nik_tot_bounded}
Let us note that the converse to Corollary \ref{tot_bounded} does not hold in general. The counterexamples were given in \cite[Theorem 4.12]{FT19} (with the ideal defined in \cite{FS10}) and in \cite[Example 6.15]{MS}. Both of them are of the form $\II=\exh(\varphi)=\finI(\varphi)$ for some lsc submeasure $\varphi$ on $\omega$ such that $\II^*$ does not have the BJNP. By Proposition \ref{nikodym_bjn}, either filter $\II^*$ has the Nikodym property, and $\varphi(\omega)=\infty$ because $\omega\notin\finI(\varphi)$, thus $\II$ is not totally bounded.
\end{remark}

\section{The Nikodym property and the \katetov\ order} \label{section_katetov}

We will focus in this section on the structure of the \katetov\ order (and its variant) on the ideals connected with the Nikodym property. 

Let $\II$ and $\JJ$ be ideals on $\omega$. We say that $\II$ is \textit{\katetov\ below } $\JJ$, which we denote by $\II\leq_K\JJ$, if there is a function $f\colon\omega\to\omega$ such that $f^{-1}[A]\in\JJ$ for all $A\in\II$. We call such a function $f$ a \textit{\katetov\ reduction}.
Similarly, we say that $\II$ is \textit{\katetov--Blass below} $\JJ$, which we denote by $\II\leq_{KB}\JJ$, if there is a finite-to-one function $f\colon\omega\to\omega$ such that $f^{-1}[A]\in\JJ$ for all $A\in\II$, and we call such a function $f$ a \textit{\katetov--Blass reduction}. 
We write $\II<_{K}\JJ$ if $\II\leq_K\JJ$ and $\JJ\nleq_K\II$. We say that $\II$ and $\JJ$ are \textit{\katetov\ equivalent}, which we denote by $\II\equiv_K\JJ$, if $\II\leq_K\JJ$ and $\JJ\leq_K\II$. Trivially, $\II\sub\JJ$ implies $\II\leq_{KB}\JJ$, with the identity function being a \katetov--Blass reduction. 
We say that $\II$ is \textit{isomorphic to} $\JJ$, which we denote by $\II\cong\JJ$, if there is a bijection $f\colon\omega\to\omega$ such that $A\in\II$ if and only if $f[A]\in\JJ$ for all $A\sub\omega$. It is immediate that $\II\cong\JJ$ implies $\II\equiv_K\JJ$.
We define the \katetov\ order for filters on $\omega$ in the natural way via dual ideals.

The following lemma is folklore.

\begin{lemma}\label{katetov_blass}
Let $\II$ and $\JJ$ be ideals on $\omega$. If $\II\leq_K\JJ$ and $\JJ$ is a P-ideal, then $\II\leq_{KB}\JJ$.
\end{lemma}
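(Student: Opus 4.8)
The plan is to start from a given \katetov\ reduction $f\colon\omega\to\omega$ witnessing $\II\leq_K\JJ$ and to repair it into a finite-to-one function by correcting it only on a single ``bad'' set that is negligible from the point of view of $\JJ$. The guiding idea is that $f$ fails to be finite-to-one precisely on its infinite fibers, and that these fibers can be controlled simultaneously using the P-ideal property of $\JJ$.

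First I would record that since $Fin\sub\II$, every singleton $\{n\}$ belongs to $\II$, so each fiber $f^{-1}[\{n\}]$ lies in $\JJ$. Applying the P-ideal property to the sequence $\seqnN{f^{-1}[\{n\}]}$ of members of $\JJ$, I obtain a single set $B\in\JJ$ with $f^{-1}[\{n\}]\sub^*B$ for every $n\in\omega$. Consequently $f^{-1}[\{n\}]\sm B$ is finite for each $n$, which is exactly the statement that the restriction $f\restriction(\omega\sm B)$ is finite-to-one.

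Next I would define the corrected function $g\colon\omega\to\omega$ by setting $g\restriction(\omega\sm B)=f\restriction(\omega\sm B)$ and letting $g\restriction B$ be any injection into $\omega$ (for instance the identity on $B$). Then $g$ is finite-to-one: for each value $m$, the fiber $g^{-1}[\{m\}]$ meets $\omega\sm B$ in a finite set (by the previous step) and meets $B$ in at most one point, so it is finite. To check that $g$ is still a reduction, take any $A\in\II$ and observe that $g^{-1}[A]\sub f^{-1}[A]\cup B$: on $\omega\sm B$ the function $g$ agrees with $f$, so that part is contained in $f^{-1}[A]\in\JJ$, while the part lying in $B$ is contained in $B\in\JJ$. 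Since $\JJ$ is closed under subsets and finite unions, $g^{-1}[A]\in\JJ$, and hence $g$ witnesses $\II\leq_{KB}\JJ$.

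I do not expect a genuine obstacle here; the one place requiring care is the verification that $g$ is finite-to-one, where one must combine the finiteness of the fibers of $f\restriction(\omega\sm B)$ with the fact that an injective correction on $B$ contributes at most one preimage per value. The whole argument turns on the single application of the P-ideal property, which converts the countably many ``small'' fibers into one uniformly small set $B$; everything else is bookkeeping on unions of sets in the ideal $\JJ$.
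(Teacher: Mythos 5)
Your proof is correct and follows essentially the same route as the paper's: both apply the P-ideal property to the fibers $f^{-1}[\{n\}]\in\JJ$ to obtain a single set $B\in\JJ$ almost containing all of them, redefine $f$ as the identity on $B$ to get a finite-to-one $g$, and verify the reduction via $g^{-1}[A]\sub f^{-1}[A]\cup B\in\JJ$. No gaps; the argument matches the paper's proof step for step.
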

\begin{proof}\footnote{This argument was presented to us by Jacek Tryba in personal communication.}
Let $f\colon\omega\to\omega$ be a \katetov\ reduction witnessing $\II\leq_K\JJ$. For every $n\in\omega$ the set $A_n=f^{-1}(\{n\})$ belongs to the ideal $\JJ$. By the assumption about $\JJ$, there exists $A\in\JJ$ such that $A_n\sub^* A$ for every $n\in\omega$. We define $g\colon\omega\to\omega$ by:
\[ g(n) = \begin{cases}
	n, & \text{if } n\in A, \\
	f(n), & \text{if } n\notin A.
\end{cases} \]
The function $f$ is finite-to-one on $\omega\sm A$, as $(\omega\sm A)\cap A_n$ is finite for every $n\in\omega$, thus $g$ is finite-to-one on $\omega$. The function $g$ is a \katetov--Blass reduction, because for any $X\in\II$ we have $g^{-1}[X]\sub f^{-1}[X]\cup A\in\JJ$, hence also $g^{-1}[X]\in\JJ$.
\end{proof}

First, we show how the \katetov\ order transfers the negation of the Nikodym property between filters (this strengthens Remark \ref{rem_subfilter}).

\begin{proposition}\label{katetov_nikodym}
If $F\leq_K G$ and $G$ does not have the Nikodym property, then also $F$ does not have the Nikodym property.
\end{proposition}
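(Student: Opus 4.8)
The plan is to reduce everything to the measure-theoretic characterization of the Nikodym property for filters provided by Theorem \ref{thm_positive_measures}, and then to transport a witnessing sequence from $N_G$ to $N_F$ by pushing the measures forward along a \katetov\ reduction.

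First I would unpack what $F\leq_K G$ means at the level of the filters themselves. Since the \katetov\ order for filters is defined via the dual ideals, $F\leq_K G$ means $F^*\leq_K G^*$, so there is a function $f\colon\omega\to\omega$ with $f^{-1}[A]\in G^*$ for every $A\in F^*$. Dualizing (writing $A=\omega\sm B$), this says precisely that $f^{-1}[B]\in G$ for every $B\in F$: preimages of $F$-large sets are $G$-large. This is the only property of $f$ that the argument uses. Next, since $G$ does not have the Nikodym property, Theorem \ref{thm_positive_measures} furnishes a sequence $\seqnN{\mu_n}$ of finitely supported non-negative measures on $N_G$ with $\supp(\mu_n)\sub\omega$, with $\mu_n(\omega)\to\infty$, and with $\mu_n(\omega\sm A)\to 0$ for every $A\in G$.

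I would then define the pushforward measures $\nu_n=f_*\mu_n$; explicitly, if $\mu_n=\sum_i\alpha_i\delta_{x_i}$ then $\nu_n=\sum_i\alpha_i\delta_{f(x_i)}$, combining the coefficients of any points sharing an image. Each $\nu_n$ is again a finitely supported non-negative measure supported inside $\omega\sub N_F$, and the bookkeeping identity $\nu_n(S)=\mu_n\big(f^{-1}[S]\big)$ holds for every $S\sub\omega$. It remains to verify that $\seqnN{\nu_n}$ satisfies conditions $(1)$--$(3)$ of Theorem \ref{thm_positive_measures} for $F$. Condition $(1)$ is immediate. For $(2)$, since $f^{-1}[\omega]=\omega$ we get $\nu_n(\omega)=\mu_n(\omega)\to\infty$. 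For $(3)$, given $B\in F$ the reduction property gives $f^{-1}[B]\in G$, and
\[ \nu_n(\omega\sm B)=\mu_n\big(f^{-1}[\omega\sm B]\big)=\mu_n\big(\omega\sm f^{-1}[B]\big)\to 0, \]
by applying condition $(3)$ for $\seqnN{\mu_n}$ to the set $A=f^{-1}[B]\in G$. Hence, by Theorem \ref{thm_positive_measures}, $F$ does not have the Nikodym property.

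I do not expect any genuine obstacle here: once the dual reformulation of $F\leq_K G$ is in place, the heart of the argument is simply that pushing forward along $f$ converts condition $(3)$ for $G$ into condition $(3)$ for $F$. The only points needing a little care are confirming that the pushforward preserves finite support and non-negativity—so that Theorem \ref{thm_positive_measures} is applicable—and checking the identity $\nu_n(S)=\mu_n\big(f^{-1}[S]\big)$; both are routine precisely because no cancellation can occur among non-negative coefficients.
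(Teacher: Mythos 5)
Your proof is correct and is essentially identical to the paper's own argument: the paper likewise takes a witnessing sequence from Theorem \ref{thm_positive_measures} for $G$ and pushes it forward along the \katetov\ reduction, defining $\mu'_n = \sum_{x\in\supp(\mu_n)} \mu_n(\{x\})\cdot\delta_{f(x)}$, then verifies conditions $(1)$--$(3)$ for $F$ using $f^{-1}[A]\in G$ for $A\in F$. The only cosmetic difference is that the paper checks condition $(3)$ by contradiction while you verify it directly via the identity $\nu_n(S)=\mu_n\big(f^{-1}[S]\big)$.
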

\begin{proof}
Let $f\colon\omega\to\omega$ be a witness for $F\leq_K G$ and let $\seqnN{\mu_n}$ be a sequence of finitely supported non-negative measures on $N_G$ as in Theorem \ref{thm_positive_measures}. For each $n\in\omega$ define a measure:
\[\mu'_n = \sum_{x\in\supp(\mu_n)} \mu_n(\{x\})\cdot\delta_{f(x)}. \]
Then, we have $\|\mu_n'\|\to\infty$, $\supp(\mu_n')\sub\omega$, and $\mu_n'\geq 0$. We also claim that $\lim_{n\to\infty}\mu_n'(\omega\sm A)=0$ holds for any $A\in F$, and thus, by Theorem \ref{thm_positive_measures}, the filter $F$ does not have the Nikodym property. Indeed, if there was $A\in F$ such that $\limsup_{n\to\infty}\mu_n'(\omega\sm A)>0$, then $f^{-1}[A]\in G$ (as $f$ is a \katetov\ reduction) and 
$\lim\sup_{n\to\infty}\mu_n\big(\omega\sm f^{-1}[A]\big)>0$, which would contradict condition $(3)$ from Theorem \ref{thm_positive_measures}.
\end{proof}

Proposition \ref{katetov_nikodym} yields the following variant of Theorem \ref{thm_AN_exh}.

\begin{theorem} \label{thm_nikodym_exh}
Let $F$ be a filter on $\omega$. Then, the following are equivalent:
\begin{enumerate}[(1)]
\item $F$ does not have the Nikodym property;
\item there is a density submeasure $\varphi$ on $\omega$ such that $\varphi(\omega)=\infty$ and $F\leq_K\exh(\varphi)^*$;
\item there is a non-pathological lsc submeasure $\varphi$ on $\omega$ such that $\varphi(\omega)=\infty$ and $F\leq_K\exh(\varphi)^*$.
\end{enumerate}
\end{theorem}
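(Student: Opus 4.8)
The plan is to deduce Theorem \ref{thm_nikodym_exh} from the already-proved Theorem \ref{thm_AN_exh} together with the monotonicity statement in Proposition \ref{katetov_nikodym}. The point is that conditions $(2)$ and $(3)$ here are obtained from the corresponding conditions in Theorem \ref{thm_AN_exh} by relaxing the containment $F\sub\exh(\varphi)^*$ to the \katetov\ reduction $F\leq_K\exh(\varphi)^*$, so the only thing to verify is that this relaxation does not destroy the equivalence with $(1)$.

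For $(1)\Rightarrow(2)$ I would simply invoke Theorem \ref{thm_AN_exh}: if $F$ lacks the Nikodym property, it furnishes a density submeasure $\varphi$ with $\varphi(\omega)=\infty$ and $F\sub\exh(\varphi)^*$. It then suffices to observe that containment of filters is stronger than \katetov\ reducibility. Precisely, $F\sub\exh(\varphi)^*$ passes to the dual ideals as $F^*\sub\exh(\varphi)$ (if $X\in F^*$ then $\omega\sm X\in F\sub\exh(\varphi)^*$, so $X\in\exh(\varphi)$), and since $\II\sub\JJ$ implies $\II\leq_{KB}\JJ$ via the identity map, we get $F^*\leq_K\exh(\varphi)$, that is, $F\leq_K\exh(\varphi)^*$. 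This yields $(2)$. The implication $(2)\Rightarrow(3)$ is immediate, as every density submeasure is a non-pathological lsc submeasure.

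For the reverse implication $(3)\Rightarrow(1)$, suppose there is a non-pathological lsc submeasure $\varphi$ with $\varphi(\omega)=\infty$ and $F\leq_K\exh(\varphi)^*$. Applying the implication $(3)\Rightarrow(1)$ of Theorem \ref{thm_AN_exh} to the filter $\exh(\varphi)^*$ itself (which trivially satisfies $\exh(\varphi)^*\sub\exh(\varphi)^*$), we conclude that $\exh(\varphi)^*$ does not have the Nikodym property. Proposition \ref{katetov_nikodym} then transfers this downward along the reduction $F\leq_K\exh(\varphi)^*$, giving that $F$ does not have the Nikodym property, i.e. $(1)$. I do not expect a genuine obstacle here: the entire content is that the negation of the Nikodym property of filters is invariant under $\leq_K$ in the downward direction, which is exactly Proposition \ref{katetov_nikodym}. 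The one point to get right is the bookkeeping between a filter, its dual ideal, and the direction of the induced reduction, so that $F\sub\exh(\varphi)^*$ really does yield $F\leq_K\exh(\varphi)^*$ and not the opposite inequality.
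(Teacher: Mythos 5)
Your proposal is correct and follows essentially the same route as the paper: both deduce $(1)\Rightarrow(2)$ from Theorem \ref{thm_AN_exh} plus the fact that filter containment implies a Kat\v{e}tov reduction, and both prove $(3)\Rightarrow(1)$ by applying Theorem \ref{thm_AN_exh} to $\exh(\varphi)^*$ itself and then transferring the failure of the Nikodym property down the reduction via Proposition \ref{katetov_nikodym}. Your extra bookkeeping (passing to dual ideals and using the identity map as a Kat\v{e}tov--Blass reduction) is exactly the detail the paper leaves implicit, and it is handled correctly.
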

\begin{proof}
$(1)\Rightarrow(2)$ Let $F$ be a filter on $\omega$ without the Nikodym property. By Theorem \ref{thm_AN_exh}, there is a density submeasure $\varphi$ on $\omega$ such that $\varphi(\omega)=\infty$ and $F\sub\exh(\varphi)^*$, and so $F\leq_K\exh(\varphi)^*$.

$(2)\Rightarrow(3)$ Obvious.

$(3)\Rightarrow(1)$ Let $\varphi$ be a non-pathological lsc submeasure on $\omega$ such that $\varphi(\omega)=\infty$ and $F\leq_K\exh(\varphi)^*$. By Theorem \ref{thm_AN_exh}, the filter $\exh(\varphi)^*$ does not have the Nikodym property, thus by Proposition \ref{katetov_nikodym} the filter $F$  does not have the Nikodym property. 
\end{proof}

\corollaryzet
\begin{proof}
If $F$ does not have the Nikodym property, then by Theorem \ref{thm_nikodym_exh} there exists a density ideal $\II$ such that $F^*\leq_K\II$.
By \cite[Proposition 3.6]{HHH} we have $\II\leq_K\ZZ$, thus $F^*\leq_K\ZZ$.
\end{proof}

We will now prove that, in the case of density ideals, several known properties are equivalent with the Nikodym property. For the definition of \erdos--Ulam ideal see \cite[Section 1.13]{Far}, for the Bolzano--Weierstrass property see \cite[Section 2]{Gda}, and for a splitting family in a Boolean algebra see \cite[page 589]{HHH}.
The following theorem extends \cite[Theorem 3.16]{Try}\footnote{We omit in $(3)$ the condition $\limsup_{n\in\omega}(\omega)>0$, as the definition of an ideal which we use in this paper already implies this condition.}.

\theoremnikodymequiv

\begin{proof}
The equivalence of conditions $(1)$, $(2)$, and $(3)$ was already shown in \cite[Theorem 3.16]{Try}. 

$(2)\Leftrightarrow(4)$ By \cite[Theorem 5.1]{Gda}. 

$(4)\Rightarrow(5)$ By \cite[Lemma 3.17]{HHH}. 

$(5)\Rightarrow(7)$ By Corollary \ref{tot_bounded}. 

$(1)\Rightarrow(6)$ If there exists an \erdos--Ulam ideal $\JJ$ such that $\II\cong\JJ$, then in particular $\II\equiv_K\JJ$. By \cite[Lemma 1.13.10]{Far} we have $\JJ\equiv_K\ZZ$, thus also $\II\equiv_K\ZZ$.

$(6)\Rightarrow(7)$ Assume, for the sake of contradiction, that $\II\equiv_K\ZZ$ and $\II^*$ does not have the Nikodym property. By Proposition \ref{katetov_nikodym} we get that $\ZZ$ does not have the Nikodym property, but $\ZZ$ is totally bounded by \cite[Proposition 3.18]{HHH}, which contradicts Corollary \ref{tot_bounded}.

$(7)\Rightarrow(3)$ Assume that condition $a)$ or condition $b)$ from $(3)$ fails. If $\sup_{n\in\omega}\|\mu_n\| = \infty$, then for the density submeasure $\varphi=\sup_{n\in\omega}\mu_n$ we have $\II=\exh(\varphi)$ and $\varphi(\omega)=\infty$, hence by
Theorem \ref{thm_AN_exh} the filter $\II^*$ does not have the Nikodym property. If $\lim\sup_{n\to\infty} \at^+(\mu_n) > 0$, then $\lim\sup_{n\to\infty} \varphi(\{n\}) > 0$, and so by \cite[Lemma 1.4]{HHH} the ideal $\II$ is not tall. It is well-known that this implies that $\II\equiv_K Fin$, and $Fin^*=Fr$ does not have the Nikodym property by the discussion in  Introduction about non-trivial convergent sequences. Thus, by Proposition \ref{katetov_nikodym}, the filter $\II^*$ does not have the Nikodym property.
In both cases we got a contradiction, hence $(3)$ is satisfied.
\end{proof}

\begin{corollary}\label{EU_Nikodym}
Every \erdos--Ulam ideal is totally bounded and has the Nikodym property.
\end{corollary}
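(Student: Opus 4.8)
The plan is to read this corollary off directly from the equivalence established in Theorem \ref{nikodym_equiv}. The essential point is that every \erdos--Ulam ideal is in particular a \emph{density} ideal, so that Theorem \ref{nikodym_equiv} applies to it verbatim, and then the two desired conclusions are simply two of the seven equivalent conditions listed there. Here ``$\JJ$ has the Nikodym property'' for an ideal $\JJ$ is understood, as throughout, to mean that the dual filter $\JJ^*$ has the Nikodym property, i.e. that $\JJ\notin\AAA\NN$.

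First I would recall from Farah \cite[Section 1.13]{Far} that an \erdos--Ulam ideal $\JJ$ is a density ideal: it can be written as $\exh(\varphi)$ for a density submeasure $\varphi=\sup_{n}\mu_n$ arising from a suitable normalized sequence $\seqnN{\mu_n}$ of finitely supported non-negative measures with pairwise disjoint supports. Once $\JJ$ is presented as the density ideal generated by such a sequence, Theorem \ref{nikodym_equiv} is applicable to $\JJ$.

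Next, I would observe that condition $(1)$ of Theorem \ref{nikodym_equiv} holds trivially for $\JJ$: the ideal $\JJ$ is isomorphic to an \erdos--Ulam ideal, namely to itself, with the identity map witnessing $\JJ\cong\JJ$. Since the seven conditions of Theorem \ref{nikodym_equiv} are mutually equivalent, all of them hold for $\JJ$. In particular, condition $(5)$ gives that $\JJ$ is totally bounded, and condition $(7)$ gives that $\JJ^*$ has the Nikodym property, which is exactly the assertion of the corollary.

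The only point that requires any care --- and hence the main, if mild, obstacle --- is justifying that an \erdos--Ulam ideal is genuinely a density ideal generated by a sequence of measures, so that Theorem \ref{nikodym_equiv} can legitimately be invoked; this is part of the standard structure theory of analytic P-ideals in Farah \cite[Section 1.13]{Far}. Alternatively, one can sidestep condition $(1)$ altogether by invoking Farah \cite[Lemma 1.13.10]{Far} directly to obtain $\JJ\equiv_K\ZZ$, i.e. condition $(6)$, and then read off conditions $(5)$ and $(7)$ from the theorem; this is essentially the route already used inside the proof of the implication $(1)\Rightarrow(6)$ above.
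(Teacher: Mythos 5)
Your proposal is correct and matches the paper exactly: the corollary is stated there without proof, as an immediate consequence of Theorem \ref{nikodym_equiv}, precisely because an \erdos--Ulam ideal is a density ideal satisfying condition $(1)$ trivially (being isomorphic to itself), whence conditions $(5)$ and $(7)$ yield total boundedness and the Nikodym property. Your care in noting that \erdos--Ulam ideals are density ideals (so the theorem applies), and your alternative route via condition $(6)$ and \cite[Lemma 1.13.10]{Far}, are both sound but do not depart from the paper's reasoning.
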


\noindent By Corollary \ref{zz} and Theorem \ref{nikodym_equiv} we get the following corollary.

\begin{corollary} \label{equiv_density}
If $\II$ is a density ideal, then $\II\in\AAA\NN$ if and only if $I<_K\ZZ$.
\end{corollary}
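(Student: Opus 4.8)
The plan is to read the statement off directly from Theorem \ref{nikodym_equiv}, combined with the fact that every density ideal sits \katetov-below $\ZZ$. Recall that by definition $\II\in\AAA\NN$ means exactly that the dual filter $\II^*$ does \emph{not} have the Nikodym property, which is precisely the negation of condition $(7)$ in Theorem \ref{nikodym_equiv}. Since that theorem asserts the equivalence of $(6)$ and $(7)$, negating both sides gives that, for a density ideal $\II$,
\[ \II\in\AAA\NN\iff \II\not\equiv_K\ZZ. \]
So it remains only to verify that, for density ideals, $\II\not\equiv_K\ZZ$ is the same as $\II<_K\ZZ$.

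For this the key ingredient is $\II\leq_K\ZZ$, which holds for \emph{every} density ideal by \cite[Proposition 3.6]{HHH} (the same fact invoked in the proof of Corollary \ref{zz}). Granting $\II\leq_K\ZZ$, the two possibilities $\II<_K\ZZ$ and $\II\equiv_K\ZZ$ are mutually exclusive and jointly exhaustive: indeed, $\II<_K\ZZ$ means $\II\leq_K\ZZ$ together with $\ZZ\nleq_K\II$, whereas $\II\equiv_K\ZZ$ means $\II\leq_K\ZZ$ together with $\ZZ\leq_K\II$; since $\II\leq_K\ZZ$ already holds, exactly one of $\ZZ\nleq_K\II$ and $\ZZ\leq_K\II$ can be true. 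Hence $\II\not\equiv_K\ZZ\iff\II<_K\ZZ$.

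Combining the two displayed equivalences yields $\II\in\AAA\NN\iff\II<_K\ZZ$, as required. I do not anticipate a genuine obstacle here, since all the substantive work is already carried out in Theorem \ref{nikodym_equiv} and in \cite[Proposition 3.6]{HHH}; the only point requiring a little care is to have $\II\leq_K\ZZ$ in hand before upgrading ``not $\equiv_K$-equivalent'' to ``strictly $<_K$-below''. As a cross-check in the forward direction, one could also bypass \cite{HHH} and obtain $\II=(\II^*)^*\leq_K\ZZ$ straight from Corollary \ref{zz}, using that $\II^*$ then fails the Nikodym property.
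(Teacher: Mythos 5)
Your proof is correct and follows essentially the same route as the paper, which likewise derives the corollary by combining the equivalence $(6)\Leftrightarrow(7)$ of Theorem \ref{nikodym_equiv} with the fact that every density ideal is $\leq_K\ZZ$. Your citing \cite[Proposition 3.6]{HHH} directly rather than Corollary \ref{zz} is an immaterial difference, since Corollary \ref{zz} is itself proved from that proposition, and you note the Corollary \ref{zz} route yourself.
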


Let us note that the equivalence from the previous corollary does not hold even for ideals of the form $\II=\exh(\varphi)$ for a non-pathological submeasure $\varphi$. A counterexample is given in Section $6$.

\subsection{Cofinal structure of $(\AAA\NN,\leq_K)$}
Now, we will define an operator $\Phi$ from $\Baire$ to $\AAA\NN$ such that its image is $\leq_{K}$-cofinal in $\AAA\NN$ and which preserves dominating families. This will enable us to obtain some properties of the order $(\AAA\NN, \leq_K)$.

Let $f\in\Baire$ be such that $f(n)>0$ for every $n\in\omega_+$.
We define a sequence $\seqnNP{\mu_n^f}$ of measures satisfying: 
\begin{itemize}
\item $\seqnN{I_n}$ is an interval partition of $\omega$, $\min(I_1)=0$ and for every $n\in\omega_+$ we have $\max(I_n)+1=\min(I_{n+1})$,
\item the measure $\mu_n^f$ is supported on the interval $I_n$,
\item $\|\mu_n^f\|=n$,
\item $\at^+(\mu_n^f)=\at^-(\mu_n^f)=1/f(n)$. 
\end{itemize}
Finally, let 
\[\Phi(f)=\exh\Big(\sup_{n\in\omega_+}\mu_n^f\Big).\]

\noindent Then, $\Phi(f)$ is a density ideal.

The following theorem presents the most important properties of the operator $\Phi$.

\begin{theorem}\label{thm_katetov}
\leavevmode
\begin{enumerate}
\item For every $\II\in\AAA\NN$ there exists $f\in\Baire$ such that $\II\leq_K\Phi(f)$. 
\item For every $g,h\in\Baire$, if $2n^2\cdot g(n)\leq h(n)$ for all but finitely many $n\in\omega$, then $\Phi(g)\leq_K\Phi(h)$.
\end{enumerate}
\end{theorem}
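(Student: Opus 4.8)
The plan is to establish both items by exhibiting explicit Kat\v{e}tov reductions assembled from a single combinatorial device. Given a finitely supported non-negative measure $\rho$ and one of the building blocks $\mu_m^f$ (which is uniform, with $\|\mu_m^f\|=m$ and every atom equal to $1/f(m)$ on the interval $I_m$), I will map the points of $I_m$ onto $\supp(\rho)$ so that each $y\in\supp(\rho)$ receives exactly $\big\lfloor \lambda f(m)\rho(\{y\})\big\rfloor$ preimages, where $\lambda=m/\|\rho\|$; call this the \emph{proportional flooring} of $\mu_m^f$ along $\rho$. Two facts drive everything. First, the pushforward of $\mu_m^f$ is coordinatewise at most $\lambda\rho$, so it contributes at most $\lambda\rho(A)$ to the measure of any preimage $r^{-1}[A]$. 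Second, the points of $I_m$ left unassigned by the flooring number at most $|\supp(\rho)|$, hence carry total $\mu_m^f$-mass at most $|\supp(\rho)|/f(m)$, which I can force below $1/m$ simply by taking $f(m)\geq m\cdot|\supp(\rho)|$.

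For item $(2)$ I match $I_n^h$ with $I_n^g$ for all $n$ with $2n^2g(n)\leq h(n)$ (all but finitely many), using $\lambda=\|\mu_n^h\|/\|\mu_n^g\|=1$; the finitely many remaining intervals are mapped arbitrarily and do not affect any limit. Here each $y\in I_n^g$ receives $\lfloor h(n)/g(n)\rfloor$ preimages, so the flooring remainder consists of at most $|I_n^g|=ng(n)$ points, of total mass $ng(n)/h(n)\leq 1/(2n)$. Consequently, for every $A\in\Phi(g)$, that is with $\mu_n^g(A)\to 0$, the reduction $r$ satisfies $\mu_n^h\big(r^{-1}[A]\big)\leq \mu_n^g(A)+1/(2n)\to 0$, whence $r^{-1}[A]\in\Phi(h)$ and $\Phi(g)\leq_K\Phi(h)$.

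For item $(1)$ I first invoke Theorem \ref{thm_nikodym_exh} to fix a density submeasure $\varphi=\sup_n\nu_n$ with $\varphi(\omega)=\infty$ (so $\sup_n\|\nu_n\|=\infty$) and $\II\leq_K\exh(\varphi)$, and then split into two cases according to the atoms of $\varphi$. If $\limsup_n\at^+(\nu_n)\geq\delta>0$, then the points carrying the heaviest atom of $\nu_n$, for the infinitely many $n$ with $\at^+(\nu_n)\geq\delta$, form an infinite set no infinite subset of which lies in $\exh(\varphi)$; thus $\exh(\varphi)$ is not tall, hence $\exh(\varphi)\equiv_K Fin$, and $\II\leq_K Fin\leq_K\Phi(f)$ for every admissible $f$. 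If instead $\at^+(\nu_n)\to 0$, I pass to a subsequence with $\|\nu_{n_k}\|\geq 2^{k+1}$ and greedily carve from each $\nu_{n_k}$ a submeasure $\rho_k\leq\nu_{n_k}$ with $\|\rho_k\|\in[2^k,2^k+1]$, which is possible precisely because the atoms tend to $0$. Setting $\psi=\sup_k\rho_k$, I obtain $\psi(\omega)=\infty$ and $\exh(\varphi)\subseteq\exh(\psi)$ (passing to the subsequence enlarges the ideal, and $\rho_k\leq\nu_{n_k}$ enlarges it further), so $\II\leq_K\exh(\psi)$; moreover the masses $d_k=\|\rho_k\|$ increase to infinity with $d_{k+1}/d_k\leq 3$.

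Finally I reduce $\exh(\psi)\leq_K\Phi(f)$: for each large $m$ I pick the index $k(m)$ with $d_{k(m)}\leq m<d_{k(m)+1}$, so that $k(m)\to\infty$ and the amplification $\lambda_m=m/d_{k(m)}$ stays below $3$, and I apply the proportional flooring of $\mu_m^f$ along $\rho_{k(m)}$, choosing $f(m)\geq m\cdot|\supp(\rho_{k(m)})|$ to push the remainder mass below $1/m$. Then for every $A\in\exh(\psi)$, i.e. with $\rho_k(A)\to 0$, the reduction $r$ gives $\mu_m^f\big(r^{-1}[A]\big)\leq\lambda_m\rho_{k(m)}(A)+1/m\leq 3\rho_{k(m)}(A)+1/m\to 0$, so $r^{-1}[A]\in\Phi(f)$; composing with $\II\leq_K\exh(\psi)$ yields $\II\leq_K\Phi(f)$. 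The main obstacle is exactly the rigidity of the prescription $\|\mu_m^f\|=m$: because the masses of the blocks are forced to run through all integers, a general density ideal must first be replaced, without leaving its $\leq_K$-cone, by one whose generating measures have masses increasing to infinity with bounded consecutive ratios and with vanishing atoms. The subsequence-and-carving step manufactures such a $\psi$, and the bounded amplification it leaves behind, together with the flooring remainder, is absorbed into the limit through the freedom in choosing $f$.
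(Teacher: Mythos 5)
Your proposal is correct, and its combinatorial heart --- the ``proportional flooring'' of a uniform fine-atom block $\mu_m^f$ along a target measure $\rho$ --- is the same device as the paper's Lemma \ref{lem_katetov} (there phrased as choosing disjoint sets $X_a\subseteq B$ with $\mu(X_a)$ close to $\lambda(\{a\})$); your item $(2)$ coincides in substance with the paper's, which deduces it from Lemma \ref{katetov_max}. Where you genuinely diverge is item $(1)$. The paper handles the mismatch between $\|\mu_m^f\|=m$ and the norms of the generating measures by \emph{rescaling}: after passing to a subsequence with $\|\lambda_n\|\geq n+1$, it replaces $\lambda_{n-1}$ by $\nu_n=\lambda_{n-1}\cdot n/\|\lambda_{n-1}\|$, which has norm exactly $n$ and, since the scaling factor is at most $1$, still satisfies $\II\subseteq\exh\big(\sup_n\nu_n\big)$; then the matching lemma applies verbatim with $f(n)=2n^2\cdot\lceil 1/\at^-(\nu_n)\rceil$, with no hypothesis whatsoever on the atoms of $\nu_n$. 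You instead keep the original measures and manufacture comparable norms by carving dyadic-mass submeasures $\rho_k\leq\nu_{n_k}$ and matching each block $\mu_m^f$ against $\rho_{k(m)}$ with amplification bounded by $3$. This works, but the carving needs $\at^+(\nu_n)\to 0$, which forces your separate non-tall case (where $\limsup_n\at^+(\nu_n)>0$ yields $\exh(\varphi)\equiv_K Fin\leq_K\Phi(f)$) --- a case the paper's argument avoids entirely, since scaling down by a constant is insensitive to atom sizes. What your route buys is a matching device that tolerates unequal total masses (any bounded amplification ratio), making explicit that the only real obstruction is the rigidity of the prescription $\|\mu_m^f\|=m$; what the paper's rescaling buys is uniformity --- a single formula for $f$ and no case analysis.
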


For the proof of Theorem \ref{thm_katetov} we will need the following simple lemma.

\begin{lemma}\label{lem_katetov}
Let $\lambda$ be a non-negative measure on a finite non-empty set $A$ and let $\eps > 0$. If $\mu$ is
a non-negative measure on a finite set $B$ such that $\mu(B)=\lambda(A)$ and $\at^+(\mu) \leq \eps/(2|A|)$, then there exists a function
$f\colon B\to A$ such that for every $C\sub A$ we have:
\[ \big| \lambda(C) - \mu\big(f^{-1}[C]\big) \big| \leq \eps.\]
\end{lemma}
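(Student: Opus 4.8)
The plan is to realize $f$ as a \emph{monotone coupling} of $\mu$ and $\lambda$: I spread both measures as mass along the interval $[0,M]$, where $M=\mu(B)=\lambda(A)$, and then let $f$ send each atom of $\mu$ to the point of $A$ over whose ``slot'' its left endpoint lies. The equal-mass hypothesis makes the two layouts fill exactly the same interval, and the bound $\at^+(\mu)\le\eps/(2|A|)$ on the atoms of $\mu$ is precisely what keeps the coupling error below one atom at every slot boundary.

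Concretely, I may first assume that every point of $B$ carries positive $\mu$-mass, since points of mass $0$ can be sent anywhere without changing any value $\mu(f^{-1}[C])$. Write $k=|A|$, enumerate $A=\{a_1,\dots,a_k\}$ and $B=\{b_1,\dots,b_m\}$, set $w_j=\mu(\{b_j\})\le\eps/(2k)$, and form the partial sums $s_0=0$, $s_i=\sum_{l\le i}\lambda(\{a_l\})$ and $t_0=0$, $t_j=\sum_{l\le j}w_l$; the mass hypothesis gives $s_k=t_m=M$. Viewing $J_i=[s_{i-1},s_i)$ as the slot of $a_i$ (of length $\lambda(\{a_i\})$), I define $f(b_j)=a_i$ for the unique $i$ with $t_{j-1}\in J_i$; this is well defined on all of $B$ because $0\le t_{j-1}<M=s_k$.

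The heart of the argument is the per-point estimate
\[\bigl|\lambda(\{a_i\})-\mu\bigl(f^{-1}(\{a_i\})\bigr)\bigr|\le\eps/(2k)\qquad\text{for every }i.\]
If some atoms are sent to $a_i$, they are consecutive, say $b_p,\dots,b_q$, so $\mu(f^{-1}(\{a_i\}))=t_q-t_{p-1}$, and I rewrite the error as $(t_q-s_i)-(t_{p-1}-s_{i-1})$. Here $t_{p-1}\in J_i$ gives $t_{p-1}\ge s_{i-1}$, while the left endpoint of the preceding atom lies below $s_{i-1}$, so $t_{p-1}<s_{i-1}+\eps/(2k)$ (the boundary case $p=1$ forces $i=1$ and $t_{p-1}=0=s_{i-1}$); likewise $t_q\ge s_i$ (because the next atom's left endpoint, or the total $t_m=s_k$, is $\ge s_i$) and $t_q=t_{q-1}+w_q<s_i+\eps/(2k)$ since $t_{q-1}<s_i$. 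Thus both bracketed terms lie in $[0,\eps/(2k))$ and their difference is at most $\eps/(2k)$ in absolute value. If no atom is sent to $a_i$, then either $\lambda(\{a_i\})=0$ and the error is $0$, or a single atom straddles all of $J_i$, which forces $\lambda(\{a_i\})<w_j\le\eps/(2k)$; in both cases the estimate holds.

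Summing over $C\sub A$ by additivity yields
\[\bigl|\lambda(C)-\mu\bigl(f^{-1}[C]\bigr)\bigr|\le\sum_{a_i\in C}\bigl|\lambda(\{a_i\})-\mu\bigl(f^{-1}(\{a_i\})\bigr)\bigr|\le |C|\cdot\frac{\eps}{2k}\le\frac{\eps}{2}\le\eps,\]
which is even stronger than required. The one place demanding care is the per-point estimate: I must check that the bound $t_q\in[s_i,s_i+\eps/(2k))$ is genuinely uniform across the cases where a slot receives many atoms, exactly one, or none (the empty case arising only when one atom is large enough to cover a whole short slot). Once this bookkeeping is pinned down, the slack built into the factor $2$ turns the raw estimate $\eps/2$ into the desired $\eps$, and no further work is needed.
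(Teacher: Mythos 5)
Your proof is correct, and it follows a genuinely different construction than the paper's. The paper argues greedily: using $\at^+(\mu)\le\eps/(2|A|)$ it builds pairwise disjoint sets $X_a\sub B$ with $\lambda(\{a\})-\eps/(2|A|)<\mu(X_a)\leq\lambda(\{a\})$ for each $a\in A$, and then dumps the leftover set $Y=B\sm\bigcup_a X_a$ (whose mass is forced to be $<\eps/2$ by the equality $\mu(B)=\lambda(A)$) onto a single designated point $a_0$; the error for a set $C$ then splits into a one-sided undershoot of at most $\eps/2$ plus, when $a_0\in C$, the overshoot $\mu(Y)<\eps/2$. Your monotone (transport-style) coupling instead orders both measures along $[0,M]$ and assigns each $\mu$-atom to the slot containing its left endpoint, which yields a \emph{uniform two-sided} per-point bound $\bigl|\lambda(\{a_i\})-\mu(f^{-1}(\{a_i\}))\bigr|\le\eps/(2|A|)$ at every point of $A$, with no exceptional dump point; summing gives $\eps/2$. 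Both proofs exploit the same two hypotheses in the same roles (equal total mass to make the layouts match, small atoms to control each block), but your version has the advantage that the fiber masses $\mu(f^{-1}(\{a\}))$ approximate $\lambda(\{a\})$ uniformly — the error analysis is symmetric in the points of $A$ and entirely mechanical — at the cost of the boundary-case bookkeeping (slots receiving many, one, or no atoms) that you correctly identify and resolve; the paper's version is shorter to state but leaves both the greedy construction of the $X_a$'s and the final verification implicit, and its fiber at $a_0$ can be off by nearly $\eps/2$ rather than $\eps/(2|A|)$.
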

\begin{proof}
As $\at^+(\mu) \leq \eps/(2|A|)$, we can construct a family $\langle X_a\colon a\in A\rangle$ of pairwise disjoint subsets of $B$ satisfying
\[ \lambda(\{a\}) - \eps/(2|A|) < \mu(X_a) \leq \lambda(\{a\}) \]
for every $a\in A$. Let us define:
\[ Y = B \sm\bigcup\{X_a\colon a\in A\}, \]
and note that above conditions together with $\mu(B)=\lambda(A)$ imply that $\mu(Y) < \eps/2$.

Fix any $a_0\in A$ and define the function $f\colon B\to A$ in the following way:
\[ f(b) = \begin{cases}
	a, & \text{if } b\in X_a, \\
	a_0, & \text{if } b\in Y.
\end{cases} \]
It is not difficult to check that $f$ has the desired property. 
\end{proof}

The next lemma provides a technical condition sufficient for the existence of a {\katetov } reduction between density ideals.

\begin{lemma}\label{katetov_max}
If $\seqnNP{\mu_n}$ and $\seqnNP{\lambda_n}$ are disjointly supported sequences of measures on $\omega$ with finite supports such that $\|\mu_n\|=\|\lambda_n\|=n$ for every $n\in\omega_+$, and $\at^+(\mu_n)\leq \at^-(\lambda_n)/2n^2$ is satisfied for all but finitely many $n\in\omega_+$, then we have:
\[\exh\Big(\sup_{n\in\omega_+}\lambda_n\Big)\leq_{K} \exh\Big(\sup_{n\in\omega_+}\mu_n\Big). \]
\end{lemma}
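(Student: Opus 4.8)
The plan is to construct the required \katetov\ reduction $f\colon\omega\to\omega$ block by block, defining it separately on each support $\supp(\mu_n)$ via Lemma \ref{lem_katetov} and then gluing the pieces together. Throughout write $S_n=\supp(\lambda_n)$ and $T_n=\supp(\mu_n)$, and recall that for the density ideals in question $A\in\exh(\sup_n\lambda_n)$ precisely when $\lambda_n(A)\to 0$, and analogously for $\mu_n$. The observation that makes Lemma \ref{lem_katetov} applicable on each block is that the masses agree: $\mu_n(T_n)=\|\mu_n\|=n=\|\lambda_n\|=\lambda_n(S_n)$.

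The arithmetic heart of the argument is choosing the error parameter. Since each atom of $\lambda_n$ has mass at least $\at^-(\lambda_n)$, we get $n=\lambda_n(S_n)\geq|S_n|\cdot\at^-(\lambda_n)$, hence $|S_n|\leq n/\at^-(\lambda_n)$. Feeding this into the hypothesis $\at^+(\mu_n)\leq\at^-(\lambda_n)/(2n^2)$ gives
\[ 2|S_n|\cdot\at^+(\mu_n)\leq 2\cdot\frac{n}{\at^-(\lambda_n)}\cdot\frac{\at^-(\lambda_n)}{2n^2}=\frac{1}{n}. \]
Thus for each of the cofinitely many $n$ for which the hypothesis holds we have $\at^+(\mu_n)\leq (1/n)/(2|S_n|)$, so Lemma \ref{lem_katetov} — applied with $A=S_n$, $B=T_n$ and $\eps=1/n$ — yields a map $f_n\colon T_n\to S_n$ with $\big|\lambda_n(C)-\mu_n(f_n^{-1}[C])\big|\leq 1/n$ for every $C\sub S_n$.

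I then glue these maps: set $f=f_n$ on $T_n$ for the good indices, define $f$ arbitrarily on the remaining finitely many blocks $T_n$, and send every point of $\omega\sm\bigcup_n T_n$ to $0$; disjointness of the $T_n$ makes this well defined. To verify that $f$ is a \katetov\ reduction, fix $A\in\exh(\sup_n\lambda_n)$, so that $\lambda_n(A)\to 0$. Because $f_n$ takes values in $S_n$ and $\mu_n$ is supported on $T_n$, we have $\mu_n(f^{-1}[A])=\mu_n\big(f_n^{-1}[A\cap S_n]\big)$, which by the block estimate differs from $\lambda_n(A\cap S_n)=\lambda_n(A)$ by at most $1/n$. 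Therefore $\mu_n(f^{-1}[A])\to 0$, i.e. $f^{-1}[A]\in\exh(\sup_n\mu_n)$, and the finitely many exceptional $n$ affect none of these limits.

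I expect no serious obstacle here: the whole argument hinges on the single estimate above, and the factor $2n^2$ in the hypothesis has evidently been calibrated precisely so that the per-block error supplied by Lemma \ref{lem_katetov} can be taken to be $1/n\to 0$. The only care needed is the bookkeeping that the mass-matching condition $\mu_n(T_n)=\lambda_n(S_n)=n$ is exactly the equality required by Lemma \ref{lem_katetov}, and that the pullback on a block equals $f_n^{-1}[A\cap S_n]$ because $f_n$ lands inside $S_n$.
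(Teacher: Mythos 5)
Your proposal is correct and follows essentially the same route as the paper's proof: the same estimate $|S_n|\le n/\at^-(\lambda_n)$ combined with the hypothesis to get $\at^+(\mu_n)\le\frac{1}{n}\cdot\frac{1}{2|S_n|}$, an application of Lemma \ref{lem_katetov} on each block with $\eps=1/n$, and a gluing of the block maps (sending leftover points to $0$) whose \katetov-reduction property is verified exactly as you do. The only cosmetic difference is that the paper phrases the final verification with an explicit $\eps$--$N$ bookkeeping ($\lambda_n(X)<1/p$ implies $\mu_n(f^{-1}[X])\le 2/p$ eventually) rather than your direct bound $\mu_n(f^{-1}[A])\le\lambda_n(A)+1/n\to 0$, which is the same argument.
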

\begin{proof}
For each $n\in\omega_+$ let $A_n=\supp(\lambda_n)$. We have $|A_n|\cdot \at^-(\lambda_n)\leq\|\lambda_n\|=n$, thus $\at^-(\lambda_n)\leq n/|A_n|$. Let $N\in\omega_+$ be such that $\at^+(\mu_n)\leq \at^-(\lambda_n)/2n^2$ for every $n\geq N$. Then, for every $n\geq N$ we get:
\[\at^+(\mu_n)\leq \at^-(\lambda_n)/2n^2 \leq\frac{n}{2n^2\cdot|A_n|}=\frac{1}{n}\cdot\frac{1}{2|A_n|}.\]

Therefore, for $n\geq N$ the assumptions of Lemma \ref{lem_katetov} are satisfied with $\lambda=\lambda_n$ on $A=A_n=\supp(\lambda_n)$ and $\mu=\mu_n$ on $B=\supp(\mu_n)$, and $\eps=1/n$. Thus, for every $n\geq N$ there exists a function $f_n\colon\supp(\mu_n)\to A_n$ such that for every $C\sub A_n$ we have:
\begin{equation}\label{measures_close}\tag{$\ast$}
\big| \lambda_n(C) - \mu_n\big(f_n^{-1}[C]\big) \big| \leq 1/n.
\end{equation}

Now we define the function $f\colon\omega\to\omega$ by the formula
\[ f(l) = \begin{cases}
  f_n(l),  & \text{ if } l\in\supp(\mu_n) \text{ for some } n\geq N, \\
  0, & \text{ otherwise,}
\end{cases} \]
for every $l\in\omega$. We will show that $f$ is a witness for $\exh\big(\sup_{n\in\omega_+}\lambda_n\big)\leq_{K} \exh\big(\sup_{n\in\omega_+}\mu_n\big)$, i.e. $f^{-1}[X]\in\exh\big(\sup_{n\in\omega_+}\mu_n\big)$ for any $X\in\exh\big(\sup_{n\in\omega_+}\lambda_n\big)$.
Let us fix $X\in\exh\big(\sup_{n\in\omega_+}\lambda_n\big)$ and $p\geq 1$. There is $m\geq\max(N,p)$ such that for every $n\geq m$ we have $\lambda_n(X)<1/p$.

Then, by (\ref{measures_close}) and the definition of $f$, for every $n\geq m$ we get:
\[\mu_n\big(f^{-1}[X]\big)\leq 2/p, \]
which shows that $\lim_{n\to\infty}\mu_n\big(f^{-1}[X]\big)=0$ and hence that $f^{-1}[X]\in\exh\big(\sup_{n\in\omega_+}\mu_n\big)$.
\end{proof}

\noindent We are in the position to prove Theorem \ref{thm_katetov}.

\begin{proof}[Proof of Theorem \ref{thm_katetov}]
$(1)$. Let $\II\in\AAA\NN$. By Theorem \ref{thm_AN_exh}, there is a density ideal $\exh\big(\sup_{n\in\omega}\lambda_n\big)$ for some sequence $\seqnN{\lambda_n}$ of finitely supported measures with disjoint supports such that $\|\lambda_n\|\to\infty$ and $\II\sub\exh\big(\sup_{n\in\omega}\lambda_n\big)$. Without losing of generality we may assume that $\|\lambda_n\|\geq n+1$ for all $n\in\omega$, as by taking a subsequence of the sequence $\seqnN{\lambda_n}$ we can only enlarge the ideal $\exh\big(\sup_{n\in\omega}\lambda_n\big)$.

We define another sequence $\seqnNP{\nu_n}$ of measures by setting \[\nu_n(A)=\lambda_{n-1}(A)\cdot n/\|\lambda_{n-1}\|\] for each $A\sub\omega$ and $n\in\omega_+$. It follows that $\|\nu_n\|=n$ for every $n\in\omega_+$, and $\II\sub\exh\big(\sup_{n\in\omega_+}\nu_n\big)$.
Indeed, if $A\in\II$ then $\lambda_n(A)\to 0$, and so \[\nu_n(A)=\lambda_{n-1}(A)\cdot n/\|\lambda_{n-1}\|\to 0,\] since $n/\|\lambda_{n-1}\|\leq 1$ for every $n\in\omega$. 

 Let us define $f\in\Baire$ by setting $f(n)=2n^2\cdot \lceil 1/\at^-(\nu_n)\rceil$ for each $n\in\omega_+$ and $f(0)=0$. 
Then, for the sequence of measures $\seqnNP{\mu_n^f}$ generating the density ideal $\Phi(f)$ we have $\at^+(\mu_n^f)\leq \at^-(\nu_n)/2n^2$  for every $n\in\omega_+$, and so by Lemma \ref{katetov_max} we get $\exh\big(\sup_{n\in\omega_+}\nu_n\big)\leq_{K}\Phi(f)$, thus also
$\II\leq_{K}\Phi(f)$.

$(2)$. Let $g,h\in\Baire$ be such that $2n^2\cdot g(n)\leq h(n)$ for all but finitely many $n\in\omega$. It follows immediately from Lemma \ref{katetov_max} that $\Phi(g)\leq_K\Phi(h)$.
\end{proof}

Theorem \ref{thm_katetov} has several important consequences. The next corollary shows that $\Phi$ preserves dominating families.

\begin{corollary}\label{an_domin}
If $\FF$ is a dominating family in $\Baire$, then $\Phi[\FF]$ is $\leq_K$-dominating in $\AAA\NN$, i.e. for any $\II\in\AAA\NN$ there is $f\in\FF$ such that $\II\leq_K\Phi(f)$.
\end{corollary}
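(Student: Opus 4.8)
The plan is to combine the two parts of Theorem \ref{thm_katetov} directly. Given a dominating family $\FF\sub\Baire$, I want to show that for an arbitrary $\II\in\AAA\NN$ there is some $f\in\FF$ with $\II\leq_K\Phi(f)$. The natural strategy is to first produce \emph{some} witness for $\II$ via part $(1)$ of the theorem, and then use the dominating property of $\FF$ together with part $(2)$ to replace that witness by a member of $\FF$, the monotonicity of $\Phi$ (in the sense of part $(2)$) ensuring that the reduction survives.

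Concretely, first I would invoke Theorem \ref{thm_katetov}.(1) to obtain a function $g\in\Baire$ such that $\II\leq_K\Phi(g)$. The point of reducing to the range of $\Phi$ is precisely that $\Phi$ behaves well under the $\leq^*$ order on $\Baire$, via part $(2)$. Next I would apply the hypothesis that $\FF$ is dominating: since $\FF$ is dominating, for the function $n\mapsto 2n^2\cdot g(n)$ (which lies in $\Baire$) there exists $f\in\FF$ with $2n^2\cdot g(n)\leq^* f(n)$, i.e. $2n^2\cdot g(n)\leq f(n)$ for all but finitely many $n\in\omega$. This is exactly the hypothesis of Theorem \ref{thm_katetov}.(2) applied with the pair $(g,f)$, so I conclude $\Phi(g)\leq_K\Phi(f)$.

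Finally, since $\leq_K$ is transitive, from $\II\leq_K\Phi(g)$ and $\Phi(g)\leq_K\Phi(f)$ we obtain $\II\leq_K\Phi(f)$ with $f\in\FF$, which is the desired conclusion. Because $\II$ was arbitrary in $\AAA\NN$ and each $\Phi(f)\in\AAA\NN$ (as $\Phi(f)$ is a density ideal whose generating submeasure has infinite total variation, and hence lies in $\AAA\NN$ by Theorem \ref{thm_AN_exh}), this shows $\Phi[\FF]$ is $\leq_K$-dominating in $\AAA\NN$.

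I do not anticipate a serious obstacle here: the corollary is essentially a formal consequence of the two parts of Theorem \ref{thm_katetov} combined with transitivity of $\leq_K$. The only minor point requiring care is to feed the correctly scaled function $n\mapsto 2n^2\cdot g(n)$ into the dominating family rather than $g$ itself, so that the growth condition $2n^2\cdot g(n)\leq^* f(n)$ needed in part $(2)$ is met; overlooking the factor $2n^2$ would be the one place a naive argument could go wrong.
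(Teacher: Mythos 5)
Your proposal is correct and follows essentially the same route as the paper: apply Theorem \ref{thm_katetov}.(1) to get a witness $g$ with $\II\leq_K\Phi(g)$, use the dominating property of $\FF$ on the scaled function $n\mapsto 2n^2\cdot g(n)$ to find $f\in\FF$ satisfying the hypothesis of Theorem \ref{thm_katetov}.(2), and conclude by transitivity of $\leq_K$. Your explicit check that $\Phi(f)\in\AAA\NN$ (via Theorem \ref{thm_AN_exh}, since the generating density submeasure is unbounded) is a small addition the paper leaves implicit, but the argument is the same.
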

\begin{proof}
Let us fix a dominating family $\FF$ in $\Baire$ and let $\II\in\AAA\NN$. By Theorem $\ref{thm_katetov}.(1)$ there is $f\in\Baire$ such that $\II\leq_K\Phi(f)$. As $\FF$ is dominating, there exists $g\in\FF$ such that $g(n)\geq 2n^2\cdot f(n)$ for all but finitely many $n\in\omega$. By Theorem $\ref{thm_katetov}.(2)$ we get $\Phi(f)\leq_K\Phi(g)$, and so $\II\leq_K\Phi(g)$.
\end{proof}

The following theorem follows immediately from Corollary \ref{an_domin}, as by definition there is a dominating family $\FF\sub\Baire$ of size $\dom$.

\corollarynikodymdominating

\begin{corollary}\label{bound}
Every subfamily of $\AAA\NN$ of size less than $\unb$ is bounded in the order $(\AAA\NN,\leq_K)$.
\end{corollary}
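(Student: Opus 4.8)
The plan is to combine the two parts of Theorem \ref{thm_katetov} with the defining property of the bounding number $\unb$. Given a subfamily $\GG\sub\AAA\NN$ with $|\GG|<\unb$, the goal is to produce a single density ideal of the form $\Phi(g)$ that is a $\leq_K$-upper bound for every member of $\GG$; since each $\Phi(g)$ lies in $\AAA\NN$, this bound is itself an element of $\AAA\NN$, as required.

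First I would use Theorem \ref{thm_katetov}.(1) to choose, for each $\II\in\GG$, a function $f_{\II}\in\Baire$ with $\II\leq_K\Phi(f_{\II})$. The key point is that the domination criterion in Theorem \ref{thm_katetov}.(2) carries the factor $2n^2$, so rather than bounding the $f_{\II}$ directly I would pass to the rescaled functions $h_{\II}\in\Baire$ defined by $h_{\II}(n)=2n^2\cdot f_{\II}(n)$. The family $\{h_{\II}\colon\II\in\GG\}$ still has cardinality at most $|\GG|<\unb$.

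Next I would invoke the definition of $\unb$: a family in $\Baire$ of size strictly less than $\unb$ cannot be unbounded, hence it is $\leq^*$-bounded. This yields some $g_0\in\Baire$ with $h_{\II}\leq^* g_0$ for every $\II\in\GG$. To ensure $g_0$ is an admissible input for $\Phi$ (which requires positivity on $\omega_+$), I would replace it by $g(n)=\max(g_0(n),1)$ for $n\in\omega_+$; this does not disturb any of the $\leq^*$ relations. Thus $2n^2\cdot f_{\II}(n)=h_{\II}(n)\leq g(n)$ for all but finitely many $n$, for each $\II\in\GG$.

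Finally, Theorem \ref{thm_katetov}.(2) gives $\Phi(f_{\II})\leq_K\Phi(g)$ for every $\II\in\GG$, and combining this with $\II\leq_K\Phi(f_{\II})$ via transitivity of the \katetov\ order yields $\II\leq_K\Phi(g)$ for all $\II\in\GG$. As $\Phi(g)\in\AAA\NN$, this completes the argument. I do not anticipate a genuine obstacle here: the only point requiring care is to absorb the factor $2n^2$ into the functions \emph{before} applying the boundedness property of $\unb$, so that the hypothesis of Theorem \ref{thm_katetov}.(2) is met; everything else is bookkeeping with transitivity and the trivial renormalization making $g$ positive.
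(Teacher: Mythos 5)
Your proposal is correct and follows essentially the same route as the paper's proof: choose $f_{\II}$ via Theorem \ref{thm_katetov}.(1), use $|\GG|<\unb$ to find a single $g\in\Baire$ eventually dominating the rescaled functions $2n^2\cdot f_{\II}(n)$, and conclude $\II\leq_K\Phi(g)$ by Theorem \ref{thm_katetov}.(2) and transitivity. Your explicit rescaling before invoking boundedness (and the trivial positivity adjustment of $g$) merely spells out steps the paper leaves implicit.
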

\begin{proof}
Let $\{\II_{\alpha}\colon\alpha < \kappa\}$ be a subfamily of $\AAA\NN$ for some $\kappa<\unb$. By Theorem $\ref{thm_katetov}.(1)$, for every $\alpha<\kappa$ there is $f_{\alpha}\in\Baire$ such that $\II_{\alpha}\leq_K\Phi(f_{\alpha})$. As $\kappa<\unb$, there exists $g\in\Baire$ such that for every $\alpha<\kappa$ we have
 $g(n)\geq 2n^2\cdot f_{\alpha}(n)$ for all but finitely many $n\in\omega$, so by Theorem $\ref{thm_katetov}.(2)$ we get $\II_{\alpha}\leq_K\Phi(g)$ for all $\alpha<\kappa$.
\end{proof}

We also have the following result, which is true in fact for all families of tall ideals which are downwards-closed with respect to $\leq_K$.

\begin{corollary}\label{decreasing}
The order $(\AAA\NN,\leq_K)$ contains decreasing chains of cardinality $\con^+$ and antichains of size $\con$.
\end{corollary}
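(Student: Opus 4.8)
The plan is to reduce the statement to a purely \katetov-theoretic fact about \emph{tall} ideals, exploiting that $\AAA\NN$ is both rich and $\leq_K$-downward closed. The first point to record is the downward closure: this is just the ideal-side reading of Proposition \ref{katetov_nikodym}, since for ideals $\II\leq_K\JJ$ means $\II^*\leq_K\JJ^*$, and hence if $\JJ\in\AAA\NN$ (i.e.\ $\JJ^*$ lacks the Nikodym property) then $\II^*$ lacks it as well, so $\II\in\AAA\NN$. The second point is that $\AAA\NN$ contains tall ideals: for $f\in\Baire$ with $f(n)\to\infty$ the density ideal $\Phi(f)$ is tall (its generating measures satisfy $\at^+(\mu_n^f)=1/f(n)\to0$) and lies in $\AAA\NN$, because $\|\mu_n^f\|=n$ forces $\sup_n\|\mu_n^f\|=\infty$, so condition $(3a)$ of Theorem \ref{nikodym_equiv} fails and $\Phi(f)^*$ lacks the Nikodym property. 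Consequently it suffices to produce, among tall ideals lying in $\AAA\NN$, a $\leq_K$-antichain of size $\con$, and a strictly $\leq_K$-decreasing chain of length $\con^+$ all lying below one fixed tall $\II_0\in\AAA\NN$; downward closure then places the entire chain in $\AAA\NN$. This is exactly the general assertion about $\leq_K$-downward-closed families of tall ideals announced in the remark.

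For the antichain I would use summable ideals. If $h\colon\omega\to(0,\infty)$ satisfies $\sum_n h(n)=\infty$, then, writing $\varphi_h(A)=\sum_{n\in A}h(n)$, we have $\II_h=\exh(\varphi_h)$ with $\varphi_h$ an lsc non-pathological submeasure (it is a measure) and $\varphi_h(\omega)=\infty$; since $\II_h^*\sub\exh(\varphi_h)^*=\II_h^*$ trivially, Theorem \ref{thm_AN_exh} yields $\II_h\in\AAA\NN$, and for $h(n)\to0$ the ideal $\II_h$ is tall. It then remains to exhibit $\con$ many such weights whose summable ideals are pairwise $\leq_K$-incomparable; that there is a perfect family of pairwise $\leq_K$-incomparable tall ideals, realizable by summable (or density) ideals, is a known feature of the \katetov\ order (see \cite{Hru} and the constructions behind Corollaries \ref{many_summable} and \ref{many_density}, via \cite{GuzMez} and \cite{Kwela}). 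Combining the two facts gives the desired antichain inside $\AAA\NN$.

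For the decreasing chain I would fix a tall $\II_0\in\AAA\NN$ and construct $\seq{\II_\alpha}{\alpha<\con^+}$ by transfinite recursion so that each $\II_\alpha$ is a tall ideal with $\II_\alpha\leq_K\II_0$ and $\II_\alpha<_K\II_\beta$ whenever $\beta<\alpha$. Downward closure of $\AAA\NN$ then guarantees $\II_\alpha\in\AAA\NN$ for every $\alpha$, and strictness guarantees the $\con^+$ members are $\leq_K$-inequivalent, so the chain has cardinality $\con^+$. The recursion is driven by a single amalgamation step: given the already-constructed strictly $\leq_K$-descending family of tall ideals below $\II_0$, of length $<\con^+$, one must find a tall ideal $\KK\leq_K\II_0$ that is strictly $\leq_K$-below \emph{all} of them at once. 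I would attempt this by a diagonal/bookkeeping construction that amalgamates the earlier \katetov\ reductions into one ideal supported on a suitably thinned $\II_0$-positive set, repeatedly using tallness of $\II_0$ to refine, and relying on the ill-foundedness of $\leq_K$ below tall ideals to secure genuine strictness.

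The amalgamation (limit) step is the main obstacle. The delicate part is showing that the strict lower bound $\KK$ remains tall, remains reducible to $\II_0$, and is strictly below \emph{every} previously constructed ideal simultaneously, while the number of ideals to be dominated stays below $\con^+$ so that the recursion never runs out of room. Once such an amalgamation lemma is established, the successor clauses, the verification that each $\II_\alpha$ is tall and $\leq_K\II_0$, and the membership in $\AAA\NN$ via downward closure are all routine, and the two halves of the corollary follow.
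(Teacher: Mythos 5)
Your overall skeleton is exactly the paper's: reduce via the $\leq_K$-downward closure of $\AAA\NN$ (Proposition \ref{katetov_nikodym}) and the presence of tall ideals in $\AAA\NN$ to a general \katetov-theoretic fact about tall ideals. The antichain half of your argument is in order: the reduction is sound, and antichains of size $\con$ consisting of summable (or simple density) ideals inside $\AAA\NN$ are indeed available via \cite{GuzMez} and \cite{Kwela}, as the paper itself records in Corollaries \ref{many_summable} and \ref{many_density}. Your verification that $\Phi(f)\in\AAA\NN$ is tall for $f(n)\to\infty$, and that $\II_h\in\AAA\NN$ is tall when $h(n)\to 0$ and $\sum_n h(n)=\infty$, is also correct.

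The decreasing-chain half, however, contains a genuine gap that you yourself flag but do not close. The ``amalgamation lemma'' --- given a strictly $\leq_K$-descending family of at most $\con$ tall ideals below a fixed tall $\II_0$, produce a tall $\KK\leq_K\II_0$ strictly $\leq_K$-below \emph{all} of them simultaneously --- is not a routine bookkeeping step; it \emph{is} the entire mathematical content of the existence of a chain of length $\con^+$. Strictness below each $\II_\beta$ means defeating every one of the $\con$ many candidate \katetov\ reductions $\omega\to\omega$ for each of up to $\con$ many ideals at once, and your sketch (``thinned $\II_0$-positive set, repeatedly using tallness, relying on ill-foundedness'') names no mechanism that accomplishes this; indeed ``ill-foundedness of $\leq_K$ below tall ideals'' is part of what is being proved, so invoking it is circular. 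The paper disposes of both halves in one line by citing \cite[Theorem 4.2]{HruSur}, which asserts precisely that below every tall ideal the \katetov\ order contains strictly decreasing chains of length $\con^+$ and antichains of size $\con$; combined with Proposition \ref{katetov_nikodym} and any single tall member of $\AAA\NN$ (your $\Phi(f)$, or $\II_{1/n}$), the corollary is immediate. To make your proposal complete, either cite that theorem at the amalgamation step or actually prove the lemma --- as written, the chain of length $\con^+$ has not been established.
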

\begin{proof}
By \cite[Theorem 4.2]{HruSur} and Proposition \ref{katetov_nikodym}.
\end{proof}

In Section $7$ we will show that such a $\leq_K$-antichain of size $\con$ in $\AAA\NN$ can consist of summable ideals, or of density ideals. Finally, we prove that the order $(\AAA\NN, \leq_K)$ does not have any maximal elements. 

\theoremnomaximal

\begin{proof}
Let $\II\in\AAA\NN$. By Theorem $\ref{thm_katetov}.(1)$ there is $f\in\Baire$ satisfying $\II\leq_K\Phi(f)$. Moreover, by Theorem $\ref{thm_katetov}.(2)$ we may assume that $f$ is sufficiently big so that both $f(n)\geq n^4$ and $f(n)\geq\sum_{i<n}f(i)$ hold for every $n\in\omega$.
Define $g\in\Baire$ by setting $g(n)=n\cdot f(f(n))$ for each $n\in\omega$. We will show that $\Phi(f)<_K\Phi(g)$, and so $\II<_K\Phi(g)$.

First, we have $g(n)\geq n\cdot\big(f(n)\big)^4$ for every $n\in\omega$, by the assumption that $f(m)\geq m^4$ for $m=f(n)$. Next, for all $n\geq 2$ we have 
\[n\cdot\big(f(n)\big)^4\geq 2n^2\cdot f(n) \textrm{, as } \big(f(n)\big)^3\geq 2n.\] Therefore, by Theorem $\ref{thm_katetov}.(2)$, we get $\Phi(f)\leq_K\Phi(g)$.

Let $\seqnNP{\lambda_n}$ and $\seqnNP{\mu_n}$ be the sequences of measures generating the ideals $\Phi(g)$ and $\Phi(f)$, respectively, as described in the definition of the operator $\Phi$. Let $A_n=\supp(\lambda_n)$ and $B_n=\supp(\mu_n)$ for each $n\in\omega_+$, so that $|A_n|=n^2\cdot f(f(n))$ and $|B_n|=n\cdot f(n)$. We have $\lambda_n(x)=1/\big(n\cdot f(f(n))\big)$ for every $x\in A_n$ and $\mu_n(x)=1/f(n)$ for every $x\in B_n$.

To show that $\Phi(g)\nleq_K\Phi(f)$, assume for the sake of contradiction that there is a \katetov\ reduction $\varphi\colon\omega\to\omega$  witnessing $\Phi(g)\leq_K\Phi(f)$. Since every density ideal is a P-ideal, by Lemma \ref{katetov_blass} we may additionally assume that $\varphi$ is finite-to-one. It is enough to find such a subset $X\sub\omega$ that \[X\notin\exh\big(\sup_{n\in\omega_+}\mu_n\big)=\Phi(f) \textrm{ and } \varphi[X]\in\exh\big(\sup_{n\in\omega_+}\lambda_n\big)=\Phi(g)\] (as $X\sub\varphi^{-1}\big[\varphi[X]\big]$, this will imply $\varphi^{-1}\big[\varphi[X]\big]\notin\exh\big(\sup_{n\in\omega_+}\mu_n\big)$, contradicting that $\varphi$ is a \katetov\ reduction). We proceed in two cases.

\medskip
\underline{\textbf{Case 1.}} There is a strictly increasing sequence $\seqkN{n_k}$ in $\omega_+$ such that for each $k\in\omega$ there exists $F_k\sub B_{n_k}$ satisfying $|F_k|= f(n_k)$ and $\varphi[F_k]\sub\bigcup\big\{A_i\colon f(i)\geq n_k\big\}$.

Let $X=\bigcup_{k\in\omega} F_k$. Since for every $k\in\omega$ we have $\mu_{n_k}(X) = 1$, it follows that $X\notin\exh\big(\sup_{n\in\omega_+}\mu_n\big)$. Next, for every $m\in\omega_+$ we have 
\[\varphi[X]\cap A_m=\bigcup_{k\in\omega}\varphi[F_k]\cap A_m\sub\bigcup\big\{\varphi[F_k]\colon n_k\leq f(m)\big\},\] 
and so
\[ \big|\varphi[X]\cap A_m\big|\leq \sum\Big\{\big|\varphi[F_k]\big|\colon n_k\leq f(m)\Big\} \leq \sum_{i\leq f(m)} f(i) = f(f(m)) + \sum_{i < f(m)} f(i)  \leq 2\cdot f(f(m)), \]
where the last inequality follows from $f(f(m))\geq\sum_{i<f(m)}f(i)$. Therefore, for every $m\in\omega_+$ we have:
\[\lambda_m\big(\varphi[X]\big)\leq \frac{2\cdot f(f(m))}{m\cdot f(f(m))}=\frac{2}{m},\]
which shows that $\lim_{m\to\infty}\lambda_m\big(\varphi[X]\big)=0$, and hence that $\varphi[X]\in\exh\big(\sup_{n\in\omega_+}\lambda_n\big)$.

\medskip
\underline{\textbf{Case 2.}} If we are not in Case $1$, then there exists $N\in\omega_+$ such that for each $n\geq N$ there is no subset $F_n\sub B_n$ satisfying $|F_n|= f(n)$ (i.e. $\mu_n(F_n)=1$) and $\varphi[F_n]\sub\bigcup\big\{A_i\colon f(i)\geq n\big\}$. Then, for every $n\geq N$ there must exist $F_n\sub B_n$ satisfying $\mu_n(F_n)= n-1$ and $\varphi[F_n]\sub\bigcup\big\{A_i\colon f(i) < n\big\}$. Without losing generality we may assume that $N\geq 3$.

First, let us take any $n\geq N$. As $f(i)\geq i^4$ for every $i\in\omega$, we have \[\big|\{i\in\omega_+\colon f(i)<n\}\big|< \sqrt[4]{n} \leq \sqrt{n-1}.\] Thus, from $F_n\sub\bigcup\big\{\varphi^{-1}[A_i]\colon f(i) < n\big\}$ it follows that there exists such a number $i(n)\in\omega_+$ that $f(i(n))<n$ and
\[\mu_n\big(F_n\cap\varphi^{-1}[A_{i(n)}]\big)\geq (n-1)\big/\sqrt{n-1} = \sqrt{n-1}. \]
By $f(i(n))<n$ and $f(i(n))\geq i(n)^4$, it follows that $n\geq i(n)^4 + 1$ and so $\sqrt{n-1}\geq i(n)^2$.
For each $n\geq N$ let us define \[E_n=F_n\cap\varphi^{-1}[A_{i(n)}].\] 
Then, $E_n\sub B_n$, $\mu_n(E_n)=\sqrt{n-1}$, and $\varphi[E_n]\sub A_{i(n)}$. 

For each $n\in\omega_+$ let $A_n=\bigcup_{j=1}^{n^2}C_j^n$ be a division of $A_n$ into $n^2$ disjoint parts of size $f(f(n))$, i.e. $\lambda_n(C_j^n)=1/n$ for every $j=1,\ldots,n^2$ and $C_j^n\cap C_i^n=\emptyset$ for $i\neq j$.  As we have 
\[ E_n\sub\bigcup_{j=1}^{n^2}\varphi^{-1}\Big[C_j^{i(n)}\Big], \]
for every 
$n\geq N$ there exists $j(n)\in\{1,\ldots,n^2\}$ such that
\[\mu_n\Big(E_n\cap\varphi^{-1}\Big[C_{j(n)}^{i(n)}\Big]\Big)\geq \sqrt{n-1}\big/i(n)^2\geq 1, \]
where the last inequality follows from $n\geq i(n)^4 + 1$.
For each $n\geq N$ let us denote \[D_n=E_n\cap\varphi^{-1}\Big[C_{j(n)}^{i(n)}\Big].\] We have $D_n\sub B_n$, $\mu_n(D_n)\geq 1$, and $\varphi[D_n]\sub C_{j(n)}^{i(n)}\sub A_{i(n)}$.

As $\varphi$ is finite-to-one, there is an increasing sequence $\seqkN{n_k}$ of natural numbers such that $i(n_k)< i(n_j)$ for every $k<j$. Let \[ X=\bigcup_{k\in\omega}D_{n_k}.\] 
Since for every $k\in\omega$ we have $\mu_{n_k}(X)\geq 1$, it follows that $X\notin\exh\big(\sup_{n\in\omega_+}\mu_n\big)$. Finally, we have \[\varphi[X]\sub\bigcup_{k\in\omega}C_{j(n_k)}^{i(n_k)},\] where the union is disjoint, and so $\lambda_{i(n_k)}(\varphi[X])\leq 1/i(n_k)$ for every $k\in\omega$ (and $\lambda_m(\varphi[X])=0$ for other $m\in\omega$). Thus, we get
$\lim_{n\to\infty}\lambda_n\big(\varphi[X]\big) =0$, due to the fact that $i(n_k)\to\infty$. Therefore, we have $\varphi[X]\in\exh\big(\sup_{n\in\omega_+}\lambda_n\big)$.

\end{proof}

\begin{corollary}
For every family $\{\II_{\alpha}\colon\alpha < \kappa\}\sub\AAA\NN$, where $\kappa<\unb$, there is $\JJ\in\AAA\NN$ such that $\II_{\alpha}<_K\JJ$ for all $\alpha<\kappa$.
\end{corollary}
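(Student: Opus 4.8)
The plan is to deduce this corollary by combining the two results directly preceding it: the boundedness of small families (Corollary \ref{bound}) and the non-existence of $\leq_K$-maximal elements (Theorem \ref{thm_no_maximal}). The idea is simply that a $\leq_K$-upper bound of the family can be pushed strictly higher while staying inside $\AAA\NN$, so no genuinely new construction is needed.

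First I would apply Corollary \ref{bound}: since $\kappa<\unb$, the subfamily $\{\II_\alpha\colon\alpha<\kappa\}$ of $\AAA\NN$ has size less than $\unb$, hence it is bounded in $(\AAA\NN,\leq_K)$, so there is some $\JJ_0\in\AAA\NN$ (concretely of the form $\Phi(g)$ for a suitable $g\in\Baire$, as in the proof of that corollary) with $\II_\alpha\leq_K\JJ_0$ for every $\alpha<\kappa$. Next I would invoke Theorem \ref{thm_no_maximal} for the ideal $\JJ_0$, which produces $\JJ\in\AAA\NN$ with $\JJ_0<_K\JJ$.

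It then remains only to check that strictness propagates down to each $\II_\alpha$. By transitivity of $\leq_K$ we have $\II_\alpha\leq_K\JJ_0\leq_K\JJ$, so $\II_\alpha\leq_K\JJ$; and if $\JJ\leq_K\II_\alpha$ held for some $\alpha$, then $\JJ\leq_K\II_\alpha\leq_K\JJ_0$ would give $\JJ\leq_K\JJ_0$, contradicting $\JJ_0<_K\JJ$. Hence $\JJ\nleq_K\II_\alpha$, and therefore $\II_\alpha<_K\JJ$ for all $\alpha<\kappa$. Since the whole argument is a formal consequence of two already-established facts, there is no substantial technical obstacle; the only point needing a moment's care is this last propagation-of-strictness step, a short chase through the transitivity of the \katetov\ order.
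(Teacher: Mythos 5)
Your proof is correct and uses exactly the same two ingredients as the paper---Corollary \ref{bound} and Theorem \ref{thm_no_maximal}---only applied in the opposite order: the paper first strictly dominates each $\II_{\alpha}$ by some $\JJ_{\alpha}\in\AAA\NN$ and then bounds the family $\{\JJ_{\alpha}\colon\alpha<\kappa\}$, whereas you first bound $\{\II_{\alpha}\colon\alpha<\kappa\}$ by a single $\JJ_0$ and then push that bound strictly higher (which is marginally more economical, needing only one application of Theorem \ref{thm_no_maximal}). Both orderings finish with the same transitivity chase to propagate strictness, so this is essentially the paper's argument.
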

\begin{proof}
If $\{\II_{\alpha}\colon\alpha < \kappa\}\sub\AAA\NN$ and $\kappa<\unb$, then, by Theorem \ref{thm_no_maximal}, for each $\alpha<\kappa$ there is $\JJ_{\alpha}\in\AAA\NN$ such that $\II_{\alpha}<_K\JJ_{\alpha}$. By Corollary \ref{bound}, there is $\JJ\in\AAA\NN$ satisfying $\JJ_{\alpha}\leq_K\JJ$ for every $\alpha<\kappa$, thus we have $\II_{\alpha}<_K\JJ$ for all $\alpha<\kappa$.
\end{proof}

\section{$\AAA\NN$ vs $\BB\JJ\NN\PP$}

We will now summarize differences between the classes of ideals $\AAA\NN$ and $\BB\JJ\NN\PP$. As we said in Introduction, these are connected with differences between the Nikodym property and the Grothendieck property of Boolean algebras. We start with the following result, stating that $\AAA\NN$ is a subclass of $\BB\JJ\NN\PP$. Note that it actually follows from Theorems \ref{thm_positive_measures} and \ref{thm_bjnp}, but for the convenience of the reader and the self-containment of the paper we provide here a direct (and more analytic) argument.

\begin{proposition} \label{nikodym_bjn}
If a filter $F$ does not have the Nikodym property, then it has the BJNP. Consequently, $\AAA\NN\sub\BB\JJ\NN\PP$.
\end{proposition}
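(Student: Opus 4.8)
The plan is to derive the statement directly from Theorem \ref{thm_positive_measures}. Since $F$ does not have the Nikodym property, that theorem furnishes a sequence $\seqnN{\mu_n}$ of finitely supported non-negative measures on $N_F$ with $\supp(\mu_n)\sub\omega$, with $\mu_n(\omega)=\|\mu_n\|\to\infty$, and with $\mu_n(\omega\sm A)\to 0$ for every $A\in F$. Discarding the finitely many initial indices, I may assume $\mu_n(\omega)>0$ for every $n$. The idea is to turn each $\mu_n$ into a norm-one signed measure by placing a compensating mass at the non-isolated point $p_F$ and renormalizing; this is precisely the normalization of the AN-sequence produced inside the proof of Theorem \ref{thm_positive_measures}.

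Concretely, I would set
\[ \nu_n=\frac{1}{2\mu_n(\omega)}\Big(\mu_n(\omega)\cdot\delta_{p_F}-\mu_n\Big) \]
for each $n\in\omega$. Each $\nu_n$ is finitely supported, and since its positive part sits on $\{p_F\}$ while its negative part is supported on $\omega$, the two have disjoint supports and equal total mass $1/2$; hence $\|\nu_n\|=1$. This settles the first requirement in the definition of a BJN-sequence.

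It remains to check that $\nu_n(f)\to 0$ for every $f\in C_b(N_F)$. Expanding the integral gives
\[ \nu_n(f)=\tfrac12\Big(f(p_F)-\frac{1}{\mu_n(\omega)}\int_{N_F} f\,d\mu_n\Big), \]
so the claim reduces to showing that the $\mu_n$-averages of $f$ converge to $f(p_F)$. Here I would use continuity of $f$ at $p_F$: for a fixed $\eps>0$ there is $A\in F$ with $|f(n)-f(p_F)|<\eps$ for all $n\in A$, while boundedness gives a constant $M$ with $|f|\le M$ everywhere. Splitting $\int(f-f(p_F))\,d\mu_n$ over $A$ and $\omega\sm A$ and dividing by $\mu_n(\omega)$ yields
\[ \Big|\frac{1}{\mu_n(\omega)}\int_{N_F} f\,d\mu_n - f(p_F)\Big|\le\eps+\frac{2M\cdot\mu_n(\omega\sm A)}{\mu_n(\omega)}, \]
using $\mu_n(A)\le\mu_n(\omega)$ on the first piece and $|f-f(p_F)|\le 2M$ on the second.

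The main (and essentially only) obstacle is controlling the second term, i.e. the contribution to the average coming from outside the $F$-large set $A$; this is exactly where conditions $(2)$ and $(3)$ of Theorem \ref{thm_positive_measures} enter, since $\mu_n(\omega\sm A)\to 0$ together with $\mu_n(\omega)\to\infty$ forces that term to $0$. Letting $\eps\to 0$ then gives $\nu_n(f)\to 0$, so $\seqnN{\nu_n}$ is a BJN-sequence on $N_F$ and $F$ has the BJNP. Finally, the inclusion $\AAA\NN\sub\BB\JJ\NN\PP$ is immediate by passing to dual filters: an ideal lies in $\AAA\NN$ exactly when its dual filter lacks the Nikodym property, in which case the first part shows that this filter has the BJNP, i.e. the ideal lies in $\BB\JJ\NN\PP$.
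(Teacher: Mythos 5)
Your proof is correct, and it takes a genuinely different route from the one in the paper. The paper's own argument does not use Theorem \ref{thm_positive_measures} at all: it starts from an arbitrary AN-sequence, normalizes it to $\lambda_n=\mu_n/\|\mu_n\|$, transfers these measures to the \cech--Stone compactification $\beta N_F$ (which is the Stone space of a Boolean algebra, hence zero-dimensional), uses the Stone--Weierstrass theorem to upgrade convergence on clopen sets to convergence against every $f\in C(\beta N_F)$, and then pulls back to $C_b(N_F)$ via the universal property of $\beta N_F$. You instead invoke Theorem \ref{thm_positive_measures} to obtain non-negative finitely supported $\mu_n$ concentrated on $\omega$ with $\mu_n(\omega)\to\infty$ and $\mu_n(\omega\sm A)\to 0$ for all $A\in F$, and build the explicit norm-one sequence $\nu_n=\frac{1}{2\mu_n(\omega)}\big(\mu_n(\omega)\delta_{p_F}-\mu_n\big)$; your verification that $\nu_n(f)\to 0$ (continuity of $f$ at $p_F$ giving the estimate on an $F$-large set, boundedness of $f$ off it, then conditions $(2)$ and $(3)$ killing the error term) is sound, and there is no circularity, since Theorem \ref{thm_positive_measures} is proved in Section 4 independently of this proposition. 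The paper itself flags a derivation of this kind as possible---it remarks that the proposition ``actually follows from Theorems \ref{thm_positive_measures} and \ref{thm_bjnp}''---but your version is more self-contained than that alternative, since you construct the BJN-sequence by hand rather than citing Theorem \ref{thm_bjnp}, the Marciszewski--Sobota characterization. As for what each approach buys: the paper's proof needs nothing beyond the definition of an AN-sequence (no disjoint-support preprocessing) and isolates a reusable analytic principle, namely that uniformly bounded finitely supported measures converging on clopen sets converge against all bounded continuous functions via compactification; yours is elementary, avoids compactifications and density arguments entirely, and exhibits the BJN-sequence in the canonical form $\frac{1}{2}(\delta_{p_F}-\widetilde{\mu}_n)$ with $\widetilde{\mu}_n$ probability measures on $\omega$, which is exactly the shape of witness that the rest of the theory works with.
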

\begin{proof}
Let $\seqnN{\mu_n}$ be an anti-Nikodym sequence of measures on $N_F$. As $\|\mu_n\|\rightarrow\infty$, by taking a suitable subsequence we may assume that $\|\mu_n\|\geq 1$ for each $n\in\omega$. We define a sequence $\seqnN{\lambda_n}$ of measures by setting $\lambda_n=\mu_n/\|\mu_n\|$ for each $n\in\omega$. We have $\|\lambda_n\|=1$ for all $n\in\omega$ and  $|\lambda_n(A)|=|\mu_n(A)|/\|\mu_n\|\leq|\mu_n(A)|/1\rightarrow 0$ for every $A\in\clopen(N_F)$. 

By \cite[Lemma 3.8.(2)]{MS}, the \cech--Stone compactification $\beta N_F$ is the Stone space of some Boolean algebra, thus in particular it is zero-dimensional.
For every $n\in\omega$ we define a measure $\overline{\lambda_n}$ on $\beta N_F$ by setting $\overline{\lambda_n}(A)=\lambda_n(A\cap N_F)$ for each $A\in Bor(\beta N_F)$. By the definition we have $\big\|\overline{\lambda_n}\big\|=1$ for each $n\in\omega$ and
$\overline{\lambda_n}(A)\rightarrow 0$ for every $A\in\clopen(\beta N_F)$. We will show that, moreover, for every $f\in C(\beta N_F)$ we have$\int_{\beta N_F}f d\overline{\lambda_n}\to 0$, i.e. $\seqnN{\overline{\lambda_n}}$ is a BJN-sequence of measures on $\beta N_F$.

Let us take arbitrary $f\in C(\beta N_F)$ and $\eps>0$. By the zero-dimensionality of $\beta N_F$ and the Stone--Weierstrass theorem, the linear span of the set $\big\{\chi_{A}\colon A\in\clopen(\beta N_F)\big\}$ is norm dense in $C(\beta N_F)$ with the supremum norm. Therefore, there exist $m\in\omega$, real numbers $\alpha_1,\ldots,\alpha_m$, and clopen subsets $A_1,\ldots,A_m$ of $\beta N_F$ such that $\|f-\sum_{i=1}^m \alpha_i\chi_{A_i}\|_{\infty}<\eps/2$. By the convergence of the sequence $\seqnN{\overline{\lambda_n}}$ on clopen sets, there is $N\in\omega$ such that for every $n\geq N$ we have:
\[\sum_{i=1}^m|\alpha_i|\cdot|\overline{\lambda_n}(A_i)|<\eps/2.\]
Then, for every $n\geq N$ it holds:
\[\Big|\int_{\beta N_F} f d\overline{\lambda_n}\Big|\leq \int_{\beta N_F}\Big|f-\sum_{i=1}^m \alpha_i\chi_{A_i}\Big| d\overline{\lambda_n} + \int_{\beta N_F}\Big|\sum_{i=1}^m \alpha_i\chi_{A_i}\Big| d\overline{\lambda_n} \leq\]
\[\leq\Big\|f-\sum_{i=1}^m \alpha_i\chi_{A_i}\Big\|_{\infty}\cdot\|\overline{\lambda_n}\| + \sum_{i=1}^m |\alpha_i|\cdot|\overline{\lambda_n}(A_i)| < \eps/2 + \eps/2 = \eps, \]
which proves that $\seqnN{\overline{\lambda_n}}$ is a BJN-sequence of measures on $\beta N_F$.

To show that $\seqnN{\lambda_n}$ is a BJN-sequence of measures on $N_F$, let $f\in C_b(N_F)$. By the universal property of the \cech--Stone compactification, there is a continuous extension $\overline{f}\in C(\beta N_F)$ of $f$. We proved that 
$\int_{\beta N_F}\overline{f} d\overline{\lambda_n}\to 0$, and by the definition of $\overline{\lambda_n}$'s we have $\int_{\beta N_F}\overline{f} d\overline{\lambda_n}=\int_{N_F}f d\lambda_n$ for every $n\in\omega$, thus $\int_{N_F}f d\lambda_n\to 0$. 
\end{proof}

Let us recall that our characterization of the Nikodym property via exhaustive ideals, theorem \ref{thm_AN_exh}, was inspired by the following characterization of the BJNP.

\begin{theorem}[Marciszewski and Sobota, \cite{MS}]\label{thm_bjnp}
Let $F$ be a free filter on $\omega$. Then, the following are equivalent:
\begin{enumerate}
\item $N_F$ has the BJNP,
\item there is a density submeasure $\varphi$ such that $F\sub\exh(\varphi)^*$,
\item there is a non-pathological lsc submeasure $\varphi$ such that $F\sub\exh(\varphi)^*$,
\item $F \leq_K \ZZ^*.$
\end{enumerate}
\end{theorem}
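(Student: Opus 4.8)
The plan is to prove the cycle $(1)\Rightarrow(2)\Rightarrow(3)\Rightarrow(4)\Rightarrow(1)$, closely mirroring the proof of Theorem \ref{thm_AN_exh} but tracking the one crucial difference: a BJN-sequence is \emph{norm-bounded} ($\|\mu_n\|=1$) with total mass tending to $0$, rather than having $\|\mu_n\|\to\infty$. This is exactly why condition $(2)$ here drops the requirement $\varphi(\omega)=\infty$ present in Theorem \ref{thm_AN_exh}, and it is the structural source of the distinction between $\AAA\NN$ and the class of filters with the BJNP.

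For $(1)\Rightarrow(2)$ I would start from a BJN-sequence $\seqnN{\mu_n}$ on $N_F$, reduce it (by the BJNP-analogue of Proposition \ref{prop_disjoint}, cf.\ \cite[Lemma 5.1.(3)]{MS}) to a disjointly supported sequence, and then run the analogue of Theorem \ref{thm_positive_measures}: splitting each $\mu_n$ into positive and negative parts and removing the point $p_F$ from the supports produces finitely supported non-negative measures $\nu_n$ on $\omega$, with disjoint supports, with $\|\nu_n\|$ bounded and bounded away from $0$, and with $\nu_n(\omega\sm A)\to 0$ for every $A\in F$. Arranging the supports into increasing blocks and putting $\varphi=\sup_n\nu_n$ gives a density submeasure, and because $\|\nu_n\|$ is \emph{bounded} one automatically gets $\varphi(\omega)<\infty$, so no infinite-mass hypothesis is available or needed. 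The verification that $F\sub\exh(\varphi)^*$, i.e.\ that $\omega\sm A\in\exh(\varphi)$ for each $A\in F$, is then identical to the corresponding step of Theorem \ref{thm_AN_exh}, using $\nu_n(\omega\sm A)\to 0$. The implication $(2)\Rightarrow(3)$ is immediate since every density submeasure is a non-pathological lsc submeasure.

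For $(3)\Rightarrow(4)$, observe that $F\sub\exh(\varphi)^*$ is equivalent to $F^*\sub\exh(\varphi)$, so the identity map witnesses $F^*\leq_{KB}\exh(\varphi)$; combined with $\exh(\varphi)\leq_K\ZZ$, which holds for every analytic P-ideal by \cite[Proposition 3.6]{HHH} with no assumption on $\varphi(\omega)$, this yields $F^*\leq_K\ZZ$, that is, $F\leq_K\ZZ^*$ by the definition of the \katetov\ order for filters via dual ideals. Finally, $(4)\Rightarrow(1)$ follows by transferring a fixed BJN-sequence from $N_{\ZZ^*}$ to $N_F$ along a \katetov\ reduction, exactly as in Proposition \ref{katetov_nikodym} but for BJN-sequences (the pushforward of a norm-one, weak$^*$-null sequence remains norm-one and weak$^*$-null). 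The base case that $N_{\ZZ^*}$ has the BJNP is witnessed by the uniform probability measures $\nu_n$ on the dyadic blocks $[2^n,2^{n+1})$: since $\at^+(\nu_n)=2^{-n}\to 0$ and each $\nu_n$ concentrates on density-one sets, the measures $\tfrac12(\nu_n-\delta_{p_F})$ are norm-one and weak$^*$-null, hence form a BJN-sequence.

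The main obstacle is the reduction in $(1)\Rightarrow(2)$. Unlike the anti-Nikodym setting, where the divergence $\|\mu_n\|\to\infty$ provides slack, here one must remove $p_F$ from the supports \emph{while preserving the normalization} and the weak$^*$-convergence, since the masses are bounded and the total masses cancel to $0$. This is precisely where the two cases of the proof of Proposition \ref{prop_disjoint}—according to whether $\liminf_n\big|\mu_n(\{p_F\})\big|$ is finite or infinite—must be adapted so that the resulting non-negative measures retain bounded, rather than divergent, norms. Everything else is a routine transcription of the already-established Nikodym analogue together with the cited \katetov-order facts.
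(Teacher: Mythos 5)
The paper does not prove this theorem at all: it is quoted verbatim from \cite{MS} (the paper's own contributions, Theorems \ref{thm_positive_measures} and \ref{thm_AN_exh}, are the Nikodym analogues, ``strongly inspired'' by it). So there is no internal proof to compare against; what you have done is reconstruct the \cite{MS} argument by transcribing the paper's proofs of Proposition \ref{prop_disjoint}, Theorem \ref{thm_positive_measures}, Theorem \ref{thm_AN_exh} and Proposition \ref{katetov_nikodym} into the norm-bounded setting. That architecture (the cycle $(1)\Rightarrow(2)\Rightarrow(3)\Rightarrow(4)\Rightarrow(1)$, passing through a non-negative-measure characterization, building $\varphi=\sup_n\nu_n$ from disjointly supported blocks, and transferring a canonical sequence along a \katetov\ reduction) is sound and is essentially how the cited result is proved. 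One simplification you missed: the obstacle you single out in $(1)\Rightarrow(2)$ largely dissolves once you invoke \cite[Lemma 5.1.(3)]{MS}, since pairwise disjoint supports force at most one $\mu_n$ to charge $p_F$; discarding it, you may take $\nu_n=|\mu_n|$, which has norm exactly $1$, and the remaining work is the clopen-set argument (as in Proposition \ref{prop_characterization}) showing $|\mu_n|(\omega\sm A)\to 0$ for every $A\in F$.

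Two of your justifications, however, are false as stated and need repair. First, in $(3)\Rightarrow(4)$ you claim $\exh(\varphi)\leq_K\ZZ$ ``holds for every analytic P-ideal by \cite[Proposition 3.6]{HHH}.'' That proposition concerns non-pathological ideals, and the unrestricted statement is provably false: by your own implication $(4)\Rightarrow(1)$ it would give every analytic P-ideal's dual filter the BJNP, contradicting the counterexamples recorded in Remark \ref{nik_tot_bounded}, which are pathological ideals $\exh(\varphi)=\finI(\varphi)$ whose dual filters fail the BJNP. Your application happens to be within the valid scope (the $\varphi$ in $(3)$ is non-pathological), so the step survives, but the scope restriction is exactly the pathological/non-pathological divide this whole circle of results turns on. Second, in $(4)\Rightarrow(1)$ the parenthetical ``the pushforward of a norm-one, weak$^*$-null sequence remains norm-one'' is false for signed measures: a non-injective \katetov\ reduction can collide positive and negative mass and collapse the norm, even to zero. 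The transfer works only because you push forward the special sequence $\tfrac12(\nu_n-\delta_{p})$: the $\nu_n$ are non-negative and supported in $\omega$, pushforward of a non-negative measure preserves total mass, and the image stays in $\omega$ so it cannot cancel against $-\tfrac12\delta_{p_F}$. This is precisely why Proposition \ref{katetov_nikodym} in the paper transfers the non-negative measures of Theorem \ref{thm_positive_measures} rather than AN-sequences themselves; make the same move explicit here and the proof is complete.
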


The only difference between conditions $(2)$ and $(3)$ of this theorem and the corresponding conditions of Theorem \ref{thm_AN_exh} is the requirement in Theorem \ref{thm_AN_exh} that the submeasure $\varphi$ is not bounded, i.e. $\varphi(\omega)=\infty$. However, this apparently small difference changes significantly the properties of families of ideals, for example the cofinal structure of $\BB\JJ\NN\PP$ is very simple, as $(\BB\JJ\NN\PP, \leq_K)$ has a maximal element by Theorem \ref{thm_bjnp}, namely $\ZZ$.

By Theorem \ref{thm_bjnp} and Corollary \ref{tot_bounded} we get:

\begin{corollary}\label{nikodym_and_bjn}
If $\varphi$ is a non-pathological lsc submeasure on $\omega$ and the ideal $\II=\exh(\varphi)$ is totally bounded, then $\II\in\BB\JJ\NN\PP\sm \AAA\NN$.
\end{corollary}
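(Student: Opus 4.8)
The plan is to read off the two membership assertions directly from the results already established, since the statement is a conjunction $\II\in\BB\JJ\NN\PP$ and $\II\notin\AAA\NN$ and each conjunct is governed by one of the cited results. Recall that by definition $\II\in\BB\JJ\NN\PP$ means that $\II^*$ has the BJNP, while $\II\notin\AAA\NN$ means precisely that $\II^*$ \emph{has} the Nikodym property. So I would split the argument into these two halves, each of which reduces to verifying the hypotheses of an earlier theorem for the filter $F=\II^*$.

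For the BJNP half I would invoke Theorem \ref{thm_bjnp} with $F=\II^*$. The key observation is that the submeasure supplied in the hypothesis is exactly the one needed: $\varphi$ is a non-pathological lsc submeasure and, since $\II=\exh(\varphi)$, we have $F=\II^*=\exh(\varphi)^*$, so trivially $F\sub\exh(\varphi)^*$. Thus condition $(3)$ of Theorem \ref{thm_bjnp} holds, and the equivalence $(3)\Rightarrow(1)$ yields that $N_{\II^*}$ has the BJNP, i.e. $\II\in\BB\JJ\NN\PP$. No construction of a new submeasure is required here; the only thing to check is that the hypothesis provides a submeasure of the right type with the right exhaustive ideal, which it does by assumption.

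For the Nikodym half I would apply Corollary \ref{tot_bounded} verbatim: since $\II=\exh(\varphi)$ for an lsc submeasure $\varphi$ and $\II$ is totally bounded, the corollary gives that $\II^*$ has the Nikodym property, which is by definition the assertion $\II\notin\AAA\NN$. Combining the two halves yields $\II\in\BB\JJ\NN\PP\sm\AAA\NN$, as desired.

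Because both conjuncts are immediate consequences of previously proved results, there is no genuine obstacle in the argument; the only subtlety worth flagging is that the single hypothesis ``$\varphi$ is a non-pathological lsc submeasure with $\exh(\varphi)$ totally bounded'' must simultaneously feed both Theorem \ref{thm_bjnp} (which wants a non-pathological lsc submeasure witnessing the BJNP via its dual filter) and Corollary \ref{tot_bounded} (which wants total boundedness to force the Nikodym property). Since the same $\varphi$ serves both roles, the corollary falls out without any further work.
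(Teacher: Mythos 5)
Your proof is correct and follows exactly the paper's route: the paper derives this corollary precisely by combining Theorem \ref{thm_bjnp} (condition $(3)$, witnessed by the given $\varphi$ itself since $\II^*\sub\exh(\varphi)^*$ trivially) with Corollary \ref{tot_bounded}, just as you do. No gaps; the two halves of the conjunction are handled by the same two results in the same way.
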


In particular, all the density ideals satisfying any of the conditions from Theorem \ref{nikodym_equiv} are elements of $\BB\JJ\NN\PP\sm \AAA\NN$. 

\begin{corollary}
$\BB\JJ\NN\PP\sm \AAA\NN$ contains $\con$ many Borel pairwise non-isomorphic ideals on $\omega$.
\end{corollary}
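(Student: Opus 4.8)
The plan is to produce the required $\con$ many ideals as \erdos--Ulam ideals. First I would observe that the reduction is almost immediate from the results already established. Every \erdos--Ulam ideal $\II$ is a density ideal, so its generating submeasure $\varphi$ is non-pathological; by Corollary \ref{EU_Nikodym} the ideal $\II$ is totally bounded; hence Corollary \ref{nikodym_and_bjn} gives $\II\in\BB\JJ\NN\PP\sm\AAA\NN$. Moreover $\II=\exh(\varphi)$ is an $F_{\sigma\delta}$ P-ideal and so is Borel. Consequently the whole statement reduces to exhibiting a family of $\con$ pairwise non-isomorphic \erdos--Ulam ideals.

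To construct such a family I would parametrize totally bounded density ideals by their generating data. For a parameter $x$ ranging over a set of size $\con$ (for instance $\cantor$, or an almost disjoint family on $\omega$) I would choose a disjointly supported sequence $\seqnNP{\mu_n^x}$ of finitely supported non-negative measures with $\|\mu_n^x\|=1$ and $\at^+(\mu_n^x)\to 0$, and set $\II_x=\exh\big(\sup_{n\in\omega_+}\mu_n^x\big)$. By the equivalence of conditions $(3)$ and $(5)$ in Theorem \ref{nikodym_equiv}, each $\II_x$ is totally bounded, hence by condition $(1)$ isomorphic to an \erdos--Ulam ideal, and thus lies in $\BB\JJ\NN\PP\sm\AAA\NN$ by the first paragraph. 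The encoding would be arranged so that the profiles of the blocks $\supp(\mu_n^x)$ — for instance the growth rates of their sizes, or the distribution of the weights inside them — differ essentially for distinct values of $x$.

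The main obstacle is verifying that distinct parameters yield non-isomorphic ideals; the membership in $\BB\JJ\NN\PP\sm\AAA\NN$ and the Borelness are routine. Here I would rely on a rigidity invariant for density ideals: any isomorphism of $\PP(\omega)/\II_x$ onto $\PP(\omega)/\II_y$ is induced, modulo the ideals, by a map carrying the generating measures of $\II_x$ to measures comparable up to a uniform multiplicative constant with those of $\II_y$, in the style of the classification of analytic P-ideals of Farah \cite{Far}. Such comparability pins down the block profiles up to bounded distortion, so choosing the parameters pairwise ``incompatible'' under these distortions forces non-isomorphism. Making this invariant precise — while simultaneously keeping every ideal in the family totally bounded — is the delicate point; the fallback is to invoke directly the known fact that there exist $\con$ pairwise non-isomorphic \erdos--Ulam ideals and to combine it with the reduction of the first paragraph.
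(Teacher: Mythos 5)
Your first paragraph, together with the ``fallback'' you offer at the end, is precisely the paper's proof: the paper cites \cite[Propositions 5 and 6]{KwelaEU} for a family of $\con$ many pairwise non-isomorphic \erdos--Ulam ideals, notes that these are Borel (being density ideals), and concludes from Corollary \ref{EU_Nikodym} (via Corollary \ref{nikodym_and_bjn}) that they all lie in $\BB\JJ\NN\PP\sm\AAA\NN$. So the citation-based version of your argument is correct and essentially identical to the paper's.

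The self-contained construction you sketch instead (your second and third paragraphs) has a genuine gap at exactly the point you flag, and the invariant you propose cannot close it. First, the paper's notion of isomorphism of ideals is a bijection $f\colon\omega\to\omega$ with $A\in\II_x$ if and only if $f[A]\in\II_y$; to rule out such bijections one needs a combinatorial invariant of the ideals themselves, which is what \cite{KwelaEU} actually constructs and which your sketch never produces --- ``choosing the parameters pairwise incompatible under these distortions'' is precisely the nontrivial content left unproven. Second, the rigidity statement you appeal to, namely that every isomorphism of $\PP(\omega)/\II_x$ onto $\PP(\omega)/\II_y$ is induced by a map of $\omega$ respecting the generating measures, is not a ZFC theorem: Farah-type rigidity for such quotients holds only under additional axioms of OCA type, whereas under CH the quotients over tall density ideals of this kind are countably saturated (by results going back to Just, Krawczyk, and Farah) and hence pairwise isomorphic as Boolean algebras. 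Consequently no invariant of the quotient algebra can distinguish your ideals $\II_x$ in ZFC, so this route is not merely incomplete but structurally unworkable; the only complete argument in your proposal is the fallback, which coincides with the paper's proof.
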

\begin{proof}
By \cite[Propositions 5 and 6]{KwelaEU}, there is a family of pairwise non-isomorphic \erdos--Ulam ideals of size $\con$. Every \erdos--Ulam as a density ideal is Borel. The result follows then from Corollary \ref{EU_Nikodym}.
\end{proof}

Another example of an element of $\BB\JJ\NN\PP\sm \AAA\NN$ is the ideal $\tr(\NN)$ (see e.g. \cite[page 580]{HHH}), which is of the form $\exh(\varphi)$ for some non-pathological submeasure $\varphi$, and is totally bounded by \cite[Proposition 3.18]{HHH}. The statement of Corollary \ref{equiv_density} does not hold for this ideal, as $\tr(\NN)<_K\ZZ$.
It follows from the following facts:
\begin{itemize}
 \item by \cite[Proposition 3.6]{HHH} we have $\tr(\NN)\leq_K\ZZ$;
 \item there cannot hold $\tr(\NN)\equiv_K\ZZ$, as it would imply that $\cov^*\big(\tr(\NN)\big)=\cov^*(\ZZ)$ by \cite[Proposition 3.1]{HHH} (see \cite[Section 1]{HHH} for the definition of $\cov^*$), but by \cite[Theorem 3.15]{HHH} this equality does not hold in the random model, and the \katetov\ order between Borel ideals is absolute (see e.g. \cite[Section 1.2]{Hru}).
\end{itemize}

\section{Examples}

In this section we present some examples of classes of ideals on $\omega$ which are connected to the class $\AAA\NN$.

\subsection{Summable ideals and large families of non-isomorphic filters}

Given $f\colon\omega\to[0, \infty)$ such that $\sum_{n\in\omega} f(n) = \infty$, the \textit{summable ideal} corresponding to $f$ is the ideal
 \[\II_f=\Big\{A\sub\omega\colon\sum_{n\in A} f(n) < \infty\Big\}.\]

\noindent The classical example is the ideal $\II_{1/n}$ corresponding to the function $f(n)=1/n$.  It is easy to see that 
$\II_f=\exh(\mu_f)$, where $\mu_f$ is a non-negative measure on $\omega$ defined for every $A\sub\omega$ by
\[\mu_f(A)=\sum_{n\in A} f(n).\]

Since by the definition we have $\mu_f(\omega)=\infty$, and $\mu_f$ is obviously a non-pathological lsc submeasure, by Theorem \ref{thm_AN_exh} we get the following corollary
\begin{corollary}\label{summable}
The dual filter $\II_f^*$ of any summable ideal $\II_f$ does not have the Nikodym property.
\end{corollary}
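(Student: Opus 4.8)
The plan is to apply the equivalence $(3)\Leftrightarrow(1)$ of Theorem \ref{thm_AN_exh} directly, taking the witnessing submeasure to be $\mu_f$ itself. First I would verify that $\mu_f$ qualifies as a non-pathological lsc submeasure. It is non-negative and finitely additive, hence a non-negative measure on $\omega$, and it was already noted that every non-negative measure on $\omega$ is non-pathological; lower semicontinuity is immediate, since $\mu_f(A)=\sum_{n\in A}f(n)=\lim_{k\to\infty}\sum_{n\in A\cap[0,k]}f(n)=\lim_{k\to\infty}\mu_f(A\cap[0,k])$ for every $A\sub\omega$, and $\mu_f(\{n\})=f(n)<\infty$ for each $n$, so $\mu_f$ is indeed a submeasure finite on singletons.

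Next I would record the two remaining facts needed to feed condition $(3)$: the defining hypothesis of a summable ideal gives $\mu_f(\omega)=\sum_{n\in\omega}f(n)=\infty$, and the identity $\II_f=\exh(\mu_f)$ noted just before the statement gives $\exh(\mu_f)^*=\II_f^*$, whence trivially $\II_f^*\sub\exh(\mu_f)^*$. Thus, with $\varphi=\mu_f$ and $F=\II_f^*$, all hypotheses of condition $(3)$ of Theorem \ref{thm_AN_exh} are satisfied, and the implication $(3)\Rightarrow(1)$ yields that $\II_f^*$ does not have the Nikodym property.

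There is essentially no obstacle here: the whole content is the observation that a summable ideal is presented from the outset as the exhaustive ideal of an \emph{unbounded} non-negative measure, so the general characterization applies verbatim, and the only genuine checking is the (routine) verification that $\mu_f$ fits the submeasure framework. If one preferred a self-contained argument bypassing Theorem \ref{thm_AN_exh}, one could instead invoke Theorem \ref{thm_positive_measures}: since $\mu_f(\omega\sm[0,n])=\infty$ for every $n$, one may pick pairwise disjoint finite intervals $I_k\sub\omega$ with $\mu_f(I_k)>k$ and set $\mu_k=\mu_f\restriction I_k$. Because $\mu_f$ is a genuine measure and not merely a submeasure, these restrictions are already the required finitely supported non-negative measures, so one does not even need non-pathologicality to extract dominated measures; conditions $(1)$--$(3)$ of Theorem \ref{thm_positive_measures} then follow exactly as in the proof of $(3)\Rightarrow(1)$ of Theorem \ref{thm_AN_exh}.
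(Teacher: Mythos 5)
Your proposal is correct and follows exactly the paper's own route: the paper likewise observes that $\mu_f$ is a non-pathological lsc submeasure with $\mu_f(\omega)=\infty$ and $\II_f=\exh(\mu_f)$, and then invokes Theorem \ref{thm_AN_exh} (condition $(3)\Rightarrow(1)$). The extra self-contained variant via Theorem \ref{thm_positive_measures} is a fine bonus but not needed.
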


Let us denote by $\Sigma$ the family of all summable ideals. It appears that $(\Sigma,\leq_K)$---and so, by Corollary \ref{summable}, $(\AAA\NN,\leq_K)$---has a rich structure.
Recall that $\PP(\omega)/Fin$ is the quotient algebra of subsets of $\omega$ modulo finite sets, endowed with the ordering induced by $\sub^*$. It is well-known that $(\PP(\omega)/Fin, \sub^*)$ contains increasing  chains of size $\mathfrak{b}$ and antichains of size $\con$. The following result connects $(\PP(\omega)/Fin, \sub^*)$ and
$(\Sigma,\leq_K)$.

\begin{theorem}[\guzman\textrm{ and }\meza\,\cite{GuzMez}]
There is an order embedding of \\ $(\PP(\omega)/Fin, \sub^*)$ into $(\Sigma, \leq_K)$.
\end{theorem}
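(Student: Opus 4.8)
The plan is to exhibit an explicit map $\Phi\colon\PP(\omega)\to\Sigma$, $A\mapsto\II_{f_A}$, and to verify that $A\sub^* B$ holds if and only if $\Phi(A)\leq_K\Phi(B)$; this immediately descends to a well-defined order embedding of $(\PP(\omega)/Fin,\sub^*)$ into $(\Sigma,\leq_K)$, well-definedness and injectivity being the standard consequences of the ``if and only if'' together with antisymmetry. To build $\Phi$ I would fix a partition $\omega=\bigsqcup_{k\in\omega}B_k$ into infinite blocks and, crucially, a strictly increasing \emph{scale} of divergent summable weights $\langle w_k\colon k\in\omega\rangle$ with $w_k$ living on $B_k$; the set $A$ is then encoded by declaring the block $B_k$ to be \emph{heavy} (carrying $w_k$, so that $\sum_{n\in B_k}w_k(n)=\infty$) exactly when $k\in A$, and \emph{negligible} (carrying a weight of total mass $\leq 2^{-k}$) when $k\notin A$. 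Writing $w_k^A$ for the weight actually attached to $B_k$, the measure $\mu_{f_A}=\sum_k w_k^A$ diverges on $\omega$ whenever $A$ is infinite, and the $f_A$-summable sets are, up to the harmless negligible part, exactly the $X$ with $\sum_{k\in A}w_k(X\cap B_k)<\infty$.

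For the easy direction, suppose $A\sub^* B$. Since the rate $w_k$ attached to each index $k$ is the \emph{same} in $\Phi(A)$ and $\Phi(B)$, a \katetov\ reduction $h$ witnessing $\Phi(A)\leq_K\Phi(B)$ can be taken to act as the identity on the blocks $B_k$ with $k\in A\cap B$ (which is cofinite in $A$), to dump the finitely many blocks with $k\in A\sm B$ together with all negligible blocks into a fixed negligible region of the target, and to distribute the remaining heavy blocks of $\Phi(B)$ across the heavy blocks of $\Phi(A)$ by a mass-matching argument in the spirit of Lemma \ref{lem_katetov}. Checking that the preimage of every $f_A$-summable set is $f_B$-summable is then a routine block-by-block estimate.

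The \textbf{main obstacle} is the converse rigidity statement: if $A\sm B$ is infinite then $\Phi(A)\not\leq_K\Phi(B)$. Here the choice of the scale $\langle w_k\rangle$ does the real work, and it must be arranged so that a heavy block cannot be ``absorbed'' by the heavy blocks present in $\Phi(B)$; indeed, with identical weights on every block no such obstruction exists, since one can always relocate mass into a repeated rate, so the rates must grow rapidly enough to be incompressible. Assuming toward a contradiction that some $h$ witnesses $\Phi(A)\leq_K\Phi(B)$, I would first use that summable ideals are P-ideals together with Lemma \ref{katetov_blass} to assume $h$ is finite-to-one. Then, exploiting the infinitely many indices $k\in A\sm B$ whose rates $w_k$ are missing from $\Phi(B)$, I would diagonalize to produce a single set $X$ that is $f_A$-summable while $h^{-1}[X]$ fails to be $f_B$-summable, contradicting the reduction property. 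Calibrating the weights precisely enough that this diagonalization succeeds while the reduction of the easy direction still exists is the delicate balance on which the whole argument rests. Finally, Corollary \ref{summable} places the entire image of $\Phi$ inside $\AAA\NN$, which is the feature of interest for the paper.
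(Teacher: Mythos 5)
The paper does not actually prove this statement: it is imported as a black box from \cite{GuzMez} and used only to deduce Corollary \ref{many_summable}, so there is no internal proof to compare against and your proposal must stand on its own. Unfortunately it contains a structural error that no choice of the weights $w_k$ can repair. In your coding, enlarging $A$ enlarges the weight function: if $D\sub C$, then $f_D\leq f_C$ pointwise except for an error supported on the blocks $B_k$ with $k\in C\sm D$ (where $f_D$ is the negligible weight), and that error has total mass at most $\sum_k 2^{-k}$. Hence every $f_C$-summable set is $f_D$-summable, i.e. $\Phi(C)\sub\Phi(D)$, and therefore $\Phi(C)\leq_K\Phi(D)$, the identity being a reduction (as the paper notes, $\II\sub\JJ$ implies $\II\leq_{KB}\JJ$). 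So $\Phi$ is inclusion-\emph{reversing} into the \katetov\ order, whereas you need it to be order-preserving: concretely, $\Phi(\omega)\leq_K\Phi(B)$ for every $B$, while $\omega\not\sub^* B$ whenever $B$ is co-infinite, so the ``only if'' half of your equivalence is false outright. Worse, the two halves are jointly inconsistent: if the ``if'' half held, then for arbitrary $A,B$ we would have $\Phi(A)\leq_K\Phi(A\cap B)$ (by the monotonicity just described, since $A\cap B\sub A$) and $\Phi(A\cap B)\leq_K\Phi(B)$ (by your ``if'' half, since $A\cap B\sub B$), whence $\Phi(A)\leq_K\Phi(B)$ for \emph{all} pairs and the whole image collapses to a single $\equiv_K$-class. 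Thus the ``delicate balance'' of weights you defer to is not delicate but impossible; no calibration rescues the statement as you formulated it.

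What your blocks-and-scales idea can plausibly yield is an order-\emph{reversing} embedding, $A\sub^* B$ iff $\Phi(B)\leq_K\Phi(A)$, which, composed with the complement map $A\mapsto\omega\sm A$ (an order anti-isomorphism of $(\PP(\omega)/Fin,\sub^*)$), would give the theorem. This reorientation also dissolves the tension you flagged: the positive half then only ever requires absorbing the \emph{finitely many} heavy blocks indexed by $A\sm B$ into the heavy region of the other ideal, while the negative half requires that \emph{infinitely many} foreign heavy blocks cannot be simultaneously absorbed. (Under your orientation the positive half would need infinitely many absorptions, since $B\sm A$ may be infinite when $A\sub^* B$, which is exactly what your incompressibility demand is designed to forbid.) But even in the corrected form your outline proves neither half: Lemma \ref{lem_katetov} matches two measures of equal \emph{finite} total mass on finite sets and says nothing about transporting a block of infinite mass; no concrete scale $\langle w_k\colon k\in\omega\rangle$ is exhibited; and the diagonalization needs a quantitative statement (for every absorbing map of the $j$-th foreign block there is a target set of mass at most $2^{-j}$ whose preimage has mass at least $1$) that must coexist with the absorbability of each single block needed for the positive half. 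Arranging weights with both features is precisely the content of the construction in \cite{GuzMez}, and it is the part that is missing here. Two minor further points: $\Phi(\emptyset)$ has finite total mass, so it is not a summable ideal and the extreme class in $\PP(\omega)/Fin$ needs separate treatment; and injectivity cannot be obtained from ``antisymmetry'', since $\leq_K$ on $\Sigma$ is only a quasi-order, not a partial order.
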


\begin{corollary}\label{many_summable}
There is an order embedding of $(\PP(\omega)/Fin, \sub^*)$ into $(\AAA\NN, \leq_K)$. In particular, $(\AAA\NN, \leq_K)$ contains increasing chains of size $\mathfrak{b}$ and antichains of size $\con$, consisting of summable ideals.
\end{corollary}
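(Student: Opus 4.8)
The plan is to compose the order embedding supplied by the theorem of \guzman\ and \meza\ with the inclusion of summable ideals into $\AAA\NN$, so that the corollary becomes a purely formal consequence. First I would record that $\Sigma\sub\AAA\NN$: this is exactly Corollary \ref{summable}, which asserts that the dual filter $\II_f^*$ of every summable ideal $\II_f$ fails the Nikodym property, and hence every summable ideal belongs to $\AAA\NN$ by definition of that class.

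Next I would invoke the preceding theorem (\cite{GuzMez}) to fix an order embedding $e\colon(\PP(\omega)/Fin,\sub^*)\to(\Sigma,\leq_K)$, i.e. a map satisfying $[A]\sub^*[B]$ if and only if $e([A])\leq_K e([B])$. Since $\leq_K$ is a single fixed relation on the class of all ideals and $\Sigma\sub\AAA\NN$, the very same map $e$, now regarded as taking values in $\AAA\NN$, still witnesses this biconditional; hence it is an order embedding of $(\PP(\omega)/Fin,\sub^*)$ into $(\AAA\NN,\leq_K)$ whose range consists entirely of summable ideals. The only point that deserves to be spelled out is that enlarging the codomain from $\Sigma$ to $\AAA\NN$ cannot destroy the embedding property: an order embedding both preserves and reflects the order, and here the order is merely the restriction of one global relation to successively larger subclasses, so the biconditional is unaffected.

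For the final clause I would transport known combinatorial structure through $e$. It is well known (and recalled in the text) that $(\PP(\omega)/Fin,\sub^*)$ carries a $\sub^*$-increasing chain of length $\mathfrak{b}$ and an antichain of size $\con$. Applying $e$, an increasing chain maps to a $\leq_K$-increasing chain of the same length because order is preserved, while an antichain maps to a $\leq_K$-antichain of the same size because order is reflected (comparability of $e([A])$ and $e([B])$ would force comparability of $[A]$ and $[B]$); in both cases the resulting ideals are summable. I do not expect any genuine obstacle: the statement is a direct composition of Corollary \ref{summable} with the cited embedding theorem, and the mild verification that order embeddings survive the enlargement of codomain is the entire content.
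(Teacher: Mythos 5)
Your proposal is correct and is exactly the paper's (implicit) argument: the corollary is stated as an immediate consequence of Corollary \ref{summable} (which places $\Sigma$ inside $\AAA\NN$) and the Guzm\'an--Meza embedding theorem, with the chains and antichains transported through the embedding from the well-known structure of $(\PP(\omega)/Fin,\sub^*)$. Your additional remark that enlarging the codomain from $\Sigma$ to $\AAA\NN$ preserves the embedding property is the same routine verification the paper leaves to the reader.
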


\noindent Recall that in Theorem \ref{dom} and Corollary \ref{decreasing} we have shown that $(\AAA\NN, \leq_K)$ contains dominating families of size $\dom$ and decreasing chains of size $\con^+$.

We now present a sketch of the construction of large families of non-isomorphic filters with and without the Nikodym property (the ideas follow closely \cite[Section 6.1]{MS}).

\begin{corollary}\label{many_many}
There exist families $\FF_1$ and $\FF_2$ of size $2^{\con}$, each consisting of pairwise non-isomorphic free filters on $\omega$ whose dual ideals are tall, such that:
\begin{enumerate}
 \item every $F\in\FF_1$ does not have the Nikodym property,
 \item every $F\in\FF_2$ has the Nikodym property.
\end{enumerate}
\end{corollary}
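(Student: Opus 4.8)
The plan is to decouple the two requirements of the statement: first reduce the clause ``pairwise non-isomorphic'' to a pure cardinality count, and then exhibit, for each of the two classes, a family of $2^{\con}$ many free filters with tall dual ideals and the prescribed Nikodym status (not worrying about isomorphism at this stage). The reduction is as follows. Every isomorphism between filters on $\omega$ is induced by a bijection $\omega\to\omega$, and there are only $\con$ such bijections, so each isomorphism class of filters has size at most $\con$. Hence any family $\GG$ of $2^{\con}$ free filters meets at least $2^{\con}$ isomorphism classes: otherwise $\GG$ would be covered by some $\kappa<2^{\con}$ classes, each of size $\leq\con$, and by $\kappa\we\con=\max(\kappa,\con)<2^{\con}$ the union would be too small. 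Thus one may thin $\GG$ to $2^{\con}$ pairwise non-isomorphic members, and it suffices to produce, in each case, $2^{\con}$ \emph{distinct} free filters with tall duals of the required type.

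For $\FF_2$ I would simply use ultrafilters. There are $2^{\con}$ free ultrafilters $\UU$ on $\omega$; each dual ideal $\UU^{*}$ is tall, since splitting an arbitrary infinite $B=B_0\sqcup B_1$ into infinite halves, exactly one half lies in $\UU^{*}$ and is an infinite subset of $B$; and each $\UU$ has the Nikodym property by Corollary \ref{ultrafilter}. Applying the counting reduction to these $2^{\con}$ ultrafilters yields $\FF_2$.

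For $\FF_1$ the obstacle is a tension: the failure of the Nikodym property forces the dual ideal $F^{*}$ to lie inside a density ideal (Theorem \ref{thm_AN_exh}), a ``thin'', Borel-dominated regime, whereas tallness together with the sheer number $2^{\con}$ pulls toward ultrafilter-like, non-meager structure. I would reconcile the two by intersecting one fixed tall summable ideal with the $2^{\con}$ maximal ideals. Fix the summable ideal $\II_{1/n}=\exh(\mu_f)$ with $f(n)=1/n$, which satisfies $\mu_f(\omega)=\infty$, so that $\II_{1/n}^{*}$ fails the Nikodym property by Corollary \ref{summable}; it is tall, since any infinite $B=\{b_0<b_1<\cdots\}$ has an infinite subset $\{b_{k_j}\}$ with $\sum_j 1/b_{k_j}<\infty$. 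For a free ultrafilter $\UU$ put $\KK_{\UU}=\II_{1/n}\cap\UU^{*}$, a free ideal on $\omega$. Then $\KK_{\UU}\sub\II_{1/n}$, so $\KK_{\UU}^{*}\sub\II_{1/n}^{*}$ and hence $\KK_{\UU}^{*}$ fails the Nikodym property by Remark \ref{rem_subfilter}; and $\KK_{\UU}$ is tall, because for infinite $B=B_0\sqcup B_1$ with $B_0\notin\UU$ one picks an infinite $Y\sub B_0$ with $Y\in\II_{1/n}$, and then $\omega\setminus Y\supseteq\omega\setminus B_0\in\UU$ gives $Y\in\UU^{*}$, so $Y\in\KK_{\UU}$.

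It remains to see that $\UU\mapsto\KK_{\UU}$ takes $2^{\con}$ distinct values, and this last step is the heart of the argument. I would restrict to the $2^{\con}$ ultrafilters concentrating on a fixed infinite summable set $S\in\II_{1/n}$ (say $S=\{2^k\colon k\in\omega\}$). Every $X\sub S$ is then summable, so for such $\UU$ one has $X\in\KK_{\UU}$ iff $\omega\setminus X\in\UU$ iff $X\notin\UU$; therefore the trace $\{X\sub S\colon X\notin\KK_{\UU}\}$ equals $\UU\restriction S$, and since $S\in\UU$ the whole ultrafilter is recovered from $\UU\restriction S$. Thus $\KK_{\UU}$ determines $\UU$, so $\{\KK_{\UU}^{*}\}$ consists of $2^{\con}$ distinct tall-dual free filters all failing the Nikodym property, and the counting reduction delivers $\FF_1$. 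The point demanding care is precisely this distinctness: one must arrange the ultrafilters so that intersecting with the summable ideal still separates $2^{\con}$ of them, while the containment in $\II_{1/n}$ never leaves the density-ideal regime that guarantees Nikodym-failure.
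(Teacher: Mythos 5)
Your proposal is correct, and its construction of $\FF_1$ is genuinely different from the paper's. The skeleton is shared: both use the same counting reduction (isomorphism classes of filters have size at most $\con$, since isomorphisms are induced by bijections of $\omega$, so $2^{\con}$ distinct filters can be thinned to $2^{\con}$ pairwise non-isomorphic ones), both take $\FF_2$ to consist of free ultrafilters (tall duals, Nikodym property by Corollary \ref{ultrafilter}), and both seed $\FF_1$ with the summable ideal $\II_{1/n}$ via Corollary \ref{summable}. The difference is how an ultrafilter is combined with $\II_{1/n}$. The paper forms disjoint sums $G\oplus\II_{1/n}^*$ over free ultrafilters $G$: Nikodym failure transfers through the \katetov\ inequality $G\oplus\II_{1/n}^*\leq_K\II_{1/n}^*$ and Proposition \ref{katetov_nikodym}, while tallness of $(G\oplus\II_{1/n}^*)^*=G^*\oplus\II_{1/n}$ and injectivity of $G\mapsto G\oplus\II_{1/n}^*$ are immediate coordinatewise. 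You instead intersect on a single copy of $\omega$, setting $\KK_{\UU}=\II_{1/n}\cap\UU^*$, so that $\KK_{\UU}^*$ is an honest subfilter of $\II_{1/n}^*$ and Nikodym failure follows from the elementary monotonicity of Remark \ref{rem_subfilter}, with no \katetov\ machinery at all; the price is that tallness requires your (correct) splitting argument and, above all, that injectivity of $\UU\mapsto\KK_{\UU}$ is not automatic---your trace argument over the summable set $S=\{2^k\colon k\in\omega\}$, which recovers $\UU$ from $\{X\sub S\colon X\notin\KK_{\UU}\}=\{X\sub S\colon X\in\UU\}$, is exactly the missing piece, and it works. In short, the paper's sum construction buys trivial distinctness and tallness at the cost of invoking the \katetov-order transfer result, whereas your intersection construction is more self-contained and keeps all witnesses inside $\II_{1/n}^*$, at the cost of a bespoke distinctness argument.
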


\begin{proof}
Let $\GG_2$ be the family of all ultrafilters on $\omega$ and let $F=\II_{1/n}^*$. We put:
\[ \GG_1=\{G\oplus F\colon G\in\GG_2\},\]
where $G\oplus F$ denotes the disjoint sum of $F$ and $G$.
By Corollary \ref{ultrafilter} all members of $\GG_2$ do have the Nikodym property. 
Since $F$ does not have the Nikodym property by Corollary \ref{summable}, and $G\oplus F\leq_K F$ for every $G\in\GG_2$ (see \cite[Section 1.2]{Hru}), by Proposition \ref{katetov_nikodym} all members of $\GG_1$ do not have the Nikodym property. Moreover, $\big(G\oplus F\big)^*=G^*\oplus F^*$ is tall for each $G\in\GG_2$, as $G^*$ and $F^*$ are tall.
As $|\GG_2|=|\GG_1|=2^{\con}$, and each family of pairwise isomorphic filters has cardinality $\leq\con$, for each $i=1,2$ we can select a subfamily $\FF_i\sub\GG_i$ consisting of $2^{\con}$ many pairwise non-isomorphic filters.
\end{proof}

\subsection{Simple density ideals}

The class of simple density ideals was introduced in \cite{Lodz} (cf. also \cite{Kwela}). For every function $g\colon\omega\to [0,\infty)$ satisfying $g(n)\to\infty$ and $n/g(n)\nrightarrow 0$ we define the \textit{simple density ideal} corresponding to $g$ as:
\[\ZZ_g=\Big\{A\sub\omega\colon\limsup_{n\to\infty}\frac{|A\cap n|}{g(n)}=0\Big\}.\]
It is proved in \cite[Theorem 3.2]{Lodz} that every $\ZZ_g$ is a density ideal. \cite[Proposition 10]{Kwela} presents a sufficient condition for a density ideal generated by a sequence of measures $\seqnN{\mu_n}$ to be a simple density ideal. Since the statement of this result is rather long and technical, we omit it here.
The ideal $\Phi(f)$ satisfies conditions $(i),(ii)$ and $(v)$ from \cite[Proposition 10]{Kwela} for any $f\in\Baire$. Conditions $(iv)$ and $(vi)$ are also satisfied for $\Phi(f)$ when $f$ is non-decreasing and $f(n)\to\infty$, as using the notation from that proposition we have $a_n=1/f(n)$. By the definition of $\Phi(f)$ we have also $\min\big(\supp(\mu_n^f)\big)=\sum_{i<n}f(i)$ for every $n\in\omega$, where $\seqnN{\mu_n^f}$ is the sequence of measures generating $\Phi(f)$. Thus, for
condition $(iii)$ to be satisfied, we need that
\[\frac{1}{f(n)}\cdot\sum_{i<n}f(i)\to 0.\]
Therefore, by \cite[Proposition 10]{Kwela} we have the following corollary.

\begin{corollary}
Let $f\in\Baire$ be non-decreasing and satisfy the condition $f(n)\big/\sum_{i<n}f(i)\to\infty$. Then, the ideal $\Phi(f)$ is equal to the simple density ideal $\ZZ_g$, where $g\colon\omega\to [0,\infty)$ is defined by $g(k)=f(n)$ for $k\in\omega$ when $k\in\supp(\mu_n)$.
\end{corollary}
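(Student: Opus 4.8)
The plan is to realise both ideals as the same $\limsup$-ideal and then check the two inclusions by hand, keeping careful track of where the ratio hypothesis is actually needed. By construction $\mu_n^f$ is the uniform measure of point-mass $1/f(n)$ on its support $I_n$, so $|I_n|=n\cdot f(n)$ and $\mu_n^f(A)=|A\cap I_n|/f(n)$ for every $A\sub\omega$; since $A\in\Phi(f)$ exactly when $\mu_n^f(A)\to 0$, this identifies
\[\Phi(f)=\Big\{A\sub\omega\colon \frac{|A\cap I_n|}{f(n)}\to 0\Big\},\]
whereas $g$ is constant, equal to $f(m)$, on each block $I_m$, so $A\in\ZZ_g$ means $|A\cap n|/g(n)\to 0$ (a $\limsup$ equal to $0$ is the same as convergence to $0$ here, as all terms are non-negative). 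I would first record that the hypotheses already force $f(n)\to\infty$ (a non-decreasing $f$ that stayed bounded would have $\sum_{i<n}f(i)\to\infty$, forcing $f(n)/\sum_{i<n}f(i)\to 0$), so that $g(n)\to\infty$ and $\ZZ_g$ is genuinely a simple density ideal.

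The inclusion $\ZZ_g\sub\Phi(f)$ is the routine half and does not use the ratio hypothesis. For $A\in\ZZ_g$ I would evaluate the defining quotient at the right endpoints $n_m=\max(I_m)$: there $g(n_m)=f(m)$ and $[0,n_m)\supseteq I_m\sm\{\max(I_m)\}$, so $|A\cap n_m|\geq |A\cap I_m|-1$, and dividing by $f(m)$ together with $f(m)\to\infty$ gives $|A\cap I_m|/f(m)\to 0$, i.e. $A\in\Phi(f)$.

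The converse inclusion $\Phi(f)\sub\ZZ_g$ is the heart of the argument and the step I expect to be the main obstacle, since for $n\in I_m$ the count $|A\cap n|$ accumulates the contributions of all earlier blocks, and these must be shown to be negligible against $f(m)$. Fixing $A\in\Phi(f)$ and $n\in I_m$ (so $g(n)=f(m)$), I would split
\[|A\cap n|=\sum_{j<m}|A\cap I_j|+\big|A\cap(I_m\cap[0,n))\big|,\]
where the second summand divided by $f(m)$ is at most $|A\cap I_m|/f(m)\to 0$. For the tail sum, given $\eps>0$ I would pick $J$ with $|A\cap I_j|\leq\eps f(j)$ for all $j\geq J$ (available since $A\in\Phi(f)$), bound $\sum_{j<m}|A\cap I_j|\leq C_J+\eps\sum_{j<m}f(j)$ with $C_J$ a constant, and divide by $f(m)$ to obtain $C_J/f(m)+\eps\cdot\big(\sum_{j<m}f(j)/f(m)\big)$. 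Both terms vanish as $m\to\infty$: the first because $f(m)\to\infty$, and the second precisely because $f(n)/\sum_{i<n}f(i)\to\infty$ is equivalent to $\sum_{i<n}f(i)/f(n)\to 0$. This is exactly where the hypothesis is indispensable, and it yields $|A\cap n|/g(n)\to 0$, hence $A\in\ZZ_g$. Note that the key estimate controls $|A\cap I_j|$ against $f(j)$ (not against $|I_j|=jf(j)$), which is why the hypothesis involves $\sum_{i<n}f(i)$ rather than $\sum_{i<n}if(i)$.

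Finally, I would remark that this hands-on computation is really just a verification of the hypotheses of \cite[Proposition 10]{Kwela}: the discussion preceding the statement already checks conditions $(i),(ii),(v)$ for arbitrary $f$ and $(iv),(vi)$ for non-decreasing $f$ with $f(n)\to\infty$, while condition $(iii)$ unwinds to $\tfrac{1}{f(n)}\sum_{i<n}f(i)\to 0$, which is the stated hypothesis. One may therefore alternatively conclude $\Phi(f)=\ZZ_g$ directly from that proposition, with the direct argument above serving to isolate the single place where the ratio condition does the work.
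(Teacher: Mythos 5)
Your argument is correct, and it takes a genuinely different route from the paper's. The paper contains no direct proof at all: the corollary is obtained by checking conditions $(i)$--$(vi)$ of \cite[Proposition 10]{Kwela} in the discussion preceding the statement, which is exactly the reduction you sketch in your closing paragraph, so your two-inclusion verification is the genuinely new content here --- and it buys something real. The paper's check of condition $(iii)$ rests on the claim that $\min\big(\supp(\mu_n^f)\big)=\sum_{i<n}f(i)$; but since $\|\mu_i^f\|=i$ and every atom of $\mu_i^f$ has mass $1/f(i)$, the support $I_i$ has exactly $i\cdot f(i)$ points (as the paper itself records in the proof of Theorem \ref{thm_no_maximal}), so in fact $\min\big(\supp(\mu_n^f)\big)=\sum_{i<n}i\cdot f(i)$. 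This quantity need not be $o\big(f(n)\big)$ under the stated hypothesis: for $f(n)=n\cdot n!$ (with $f(0)=1$) one has $\sum_{i<n}f(i)=n!$, so $f(n)\big/\sum_{i<n}f(i)=n\to\infty$, while $\sum_{i<n}i\cdot f(i)\big/f(n)\geq(n-1)^2(n-1)!\big/\big(n^2(n-1)!\big)\to 1$. Thus the paper's verification of condition $(iii)$, as written, rests on a miscount, and (depending on the precise form of that condition) the route through \cite[Proposition 10]{Kwela} may require a strictly stronger hypothesis. Your hands-on proof shows the corollary is nevertheless true exactly as stated, and it isolates why: membership of $A$ in $\Phi(f)$ bounds $|A\cap I_j|$ by $\eps f(j)$ rather than by $|I_j|=jf(j)$, so the accumulated contribution of earlier blocks to $|A\cap n|$ is at most $C_J+\eps\sum_{j<m}f(j)$, which the ratio hypothesis makes $o\big(f(m)\big)$; the reverse inclusion, evaluated at right endpoints, is routine. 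The only cosmetic omission is that calling $\ZZ_g$ a simple density ideal also requires $n/g(n)\nrightarrow 0$ (part of the definition in \cite{Lodz}), which is immediate here since $\max(I_m)\big/g\big(\max(I_m)\big)\geq\big(m\cdot f(m)-1\big)\big/f(m)\to\infty$.
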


It is proved in \cite[Theorem 3]{Kwela} that there is a family $\{\ZZ_{g_\alpha}\colon\alpha<\con\}$ such that $\ZZ_{g_\alpha}$ is not $\leq_K$-comparable to $\ZZ_{g_{\beta}}$ for any $\alpha\neq\beta$. By Theorem \ref{nikodym_equiv}, for every density ideal $\II$ such that $\II\notin\AAA\NN$ we have $\II\equiv_K\ZZ$, and so the $\leq_K$-antichain must lie inside the $\AAA\NN$ family:

\begin{corollary}\label{many_density}
$(\AAA\NN,\leq_K)$ contains antichains of size $\con$ consisting of density ideals.
\end{corollary}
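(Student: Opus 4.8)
The plan is to invoke directly the large Katětov antichain of simple density ideals produced by Kwela and then to discard the single member that could possibly fail to lie in $\AAA\NN$. By \cite[Theorem 3]{Kwela} there is a family $\{\ZZ_{g_\alpha}\colon\alpha<\con\}$ of simple density ideals that is pairwise $\leq_K$-incomparable, and by \cite[Theorem 3.2]{Lodz} every simple density ideal is in particular a density ideal. Thus I would start already in possession of $\con$ many density ideals forming a $\leq_K$-antichain; it remains only to check that almost all of them belong to $\AAA\NN$.

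The key step is to apply the dichotomy for density ideals recorded in Corollary \ref{equiv_density} (equivalently, the equivalence of conditions $(6)$ and $(7)$ in Theorem \ref{nikodym_equiv}): for a density ideal $\II$ one has $\II\in\AAA\NN$ if and only if $\II<_K\ZZ$, that is, if and only if $\II\not\equiv_K\ZZ$. I would then observe that a $\leq_K$-antichain can contain \emph{at most one} ideal that is Katětov-equivalent to $\ZZ$: if both $\ZZ_{g_\alpha}$ and $\ZZ_{g_\beta}$ satisfied $\ZZ_{g_\alpha}\equiv_K\ZZ\equiv_K\ZZ_{g_\beta}$ with $\alpha\neq\beta$, then $\ZZ_{g_\alpha}$ and $\ZZ_{g_\beta}$ would be $\leq_K$-comparable, contradicting their incomparability. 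Consequently all but at most one of the ideals $\ZZ_{g_\alpha}$ satisfy $\ZZ_{g_\alpha}\not\equiv_K\ZZ$ and therefore lie in $\AAA\NN$.

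Finally I would remove from $\{\ZZ_{g_\alpha}\colon\alpha<\con\}$ the (at most one) index $\alpha$ for which $\ZZ_{g_\alpha}\equiv_K\ZZ$; since deleting a single element from a family of size $\con$ leaves a family of size $\con$, what remains is a $\leq_K$-antichain of $\con$ many density ideals, each belonging to $\AAA\NN$, as required. There is no genuine obstacle here beyond bookkeeping: all the substantive content is already packaged into Theorem \ref{nikodym_equiv} and Corollary \ref{equiv_density}, and the only point to watch is that membership in a Katětov antichain forbids equivalence to the fixed ideal $\ZZ$ for all but one member of the family.
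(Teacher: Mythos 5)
Your proof is correct and is essentially the paper's own argument: both invoke Kwela's $\leq_K$-antichain of simple density ideals (which are density ideals) and the dichotomy from Theorem \ref{nikodym_equiv}/Corollary \ref{equiv_density} that a density ideal outside $\AAA\NN$ must be $\equiv_K\ZZ$, so that pairwise incomparability forces membership in $\AAA\NN$. If anything, your version is slightly more careful than the paper's, which asserts outright that the whole antichain lies inside $\AAA\NN$, whereas you explicitly note that at most one member could satisfy $\ZZ_{g_\alpha}\equiv_K\ZZ$ and discard it, which still leaves $\con$ many ideals.
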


\section{The Nikodym property of Boolean algebras}\label{section_algebras}
This section is devoted to an analysis of the connection between embedding of a space $N_F$ without the finitely supported Nikodym property into the Stone space of a given Boolean algebra $\AAA$ and the lack of the Nikodym property of $\AAA$.

We first need to introduce some notation (cf. \cite[Section 7]{MS}). Let $X$ be a compact Hausdorff space, $Y$ an infinite countable subset of $X$, and $x\in\overline{Y}\sm Y$. Let $Z_{X}(x, Y)$ be the space $Y\cup\{x\}$ endowed with the subspace topology.
By $\mathfrak{N}_{X}(x)$ we denote the collection of all $U\sub X$ such that $x\in\interior U$. We then put:
\[ \mathfrak{J}_{X}(x, Y)=\big\{U\cap Y\colon U\in\mathfrak{N}_{X}(x)\big\}.\]
Note that since  $x\in\overline{Y}\sm Y$, $\mathfrak{J}_{X}(x, Y)$ is a filter on the infinite countable set $Y$ containing all its co-finite subsets. If $f\colon\omega\to Y$ is a bijection, then $F=\big\{f^{-1}[V]\colon V\in\mathfrak{J}_{X}(x, Y)\big\}$ is a free filter on $\omega$---we will say in this case that $F$ is \textit{f-associated} to 
$\mathfrak{J}_{X}(x, Y)$. We also have a bijective continuous function $\varphi_{f}\colon N_F\to Z_{X}(x, Y)$ associated with $f$, which is defined by
$\varphi_f\restriction\omega=f$ and $\varphi_f\big(p_F\big)=x$.

\begin{proposition}\label{prop_associated}
Suppose $\AAA$ is a Boolean algebra. Let $Y$ be a countable subset of $St(\AAA)$ and $x\in\overline{Y}\sm Y$. Let $f\colon\omega\to Y$ be a bijection and $F$ the filter on $\omega$ $f$-associated to $\mathfrak{J}_{St(\AAA)}(x, Y)$.

Then, if $F$ does not have the Nikodym property, then the algebra $\AAA$ does not have the Nikodym property.
\end{proposition}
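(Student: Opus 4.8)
The plan is to convert an anti-Nikodym sequence on $N_F$ into a pointwise-null but norm-unbounded sequence of measures on $\AAA$ by transporting the measures along the bijection $f$. First I would invoke Proposition \ref{prop_disjoint} to fix an AN-sequence $\seqnN{\theta_n}$ on $N_F$ whose supports are contained in $\omega$ (disjointness is not essential here, but having $\supp(\theta_n)\sub\omega$ is). Pushing each $\theta_n$ forward through $f$, I would define the finitely supported signed Borel measure
\[\nu_n=\sum_{k\in\supp(\theta_n)}\theta_n(\{k\})\cdot\delta_{f(k)}\]
on $St(\AAA)$, and let $m_n$ be its restriction to the clopen algebra, i.e. $m_n(A)=\nu_n\big([A]_{\AAA}\big)$ for $A\in\AAA$. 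Each $m_n$ is a finitely additive signed measure on $\AAA$, and since the finitely many points $f(k)$ are distinct in the zero-dimensional Hausdorff space $St(\AAA)$ they can be separated by clopen sets; hence $\|m_n\|=\|\nu_n\|=\|\theta_n\|$, so $\|m_n\|\to\infty$.

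It remains to show that $\seqnN{m_n}$ is pointwise null, i.e. $m_n(A)\to 0$ for every $A\in\AAA$; then the equivalent form of the Nikodym property recalled in the Introduction (see \cite[Proposition 2.4]{SZ}) immediately yields that $\AAA$ fails to have the Nikodym property. Writing $D_A=f^{-1}\big[[A]_{\AAA}\cap Y\big]\sub\omega$, one has $m_n(A)=\theta_n(D_A)$ because $\supp(\theta_n)\sub\omega$ and $f$ maps $\omega$ bijectively onto $Y$. I would split into two cases according to whether $x$ lies in $[A]_{\AAA}$. If $x\notin[A]_{\AAA}$, then $[\neg A]_{\AAA}$ is a clopen neighbourhood of $x$, so $\omega\sm D_A=f^{-1}\big[[\neg A]_{\AAA}\cap Y\big]\in F$ (as $F$ is $f$-associated to $\mathfrak{J}_{St(\AAA)}(x,Y)$); thus $D_A$ is a clopen subset of $N_F$ and $m_n(A)=\theta_n(D_A)\to 0$.

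The case $x\in[A]_{\AAA}$ is the crux, and it is exactly where one must use that an AN-sequence is \emph{signed} rather than non-negative. Here $D_A\in F$, so $D_A$ itself need not be clopen in $N_F$, but $D_A\cup\{p_F\}$ is a clopen neighbourhood of $p_F$; since $\theta_n$ is supported inside $\omega$ we get $m_n(A)=\theta_n(D_A)=\theta_n\big(D_A\cup\{p_F\}\big)\to 0$, using that $\theta_n$ tends to $0$ on every clopen set (in particular the whole space $N_F$ is clopen, so its total mass vanishes). Note that replacing $\theta_n$ by the non-negative measures of Theorem \ref{thm_positive_measures} would break precisely this case, since those measures satisfy $\mu_n(D_A)\to\infty$ for $D_A\in F$; it is the cancellation between the positive and negative parts of $\theta_n$, forcing the total mass to vanish, that makes the transported sequence pointwise null on \emph{all} of $\AAA$. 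Combining the two cases gives $m_n(A)\to 0$ for every $A\in\AAA$, completing the argument.
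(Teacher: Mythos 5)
Your proof is correct and follows essentially the same route as the paper: both arguments transport the anti-Nikodym sequence to $St(\AAA)$ along $f$ and use the $f$-association of $F$ to turn clopen subsets of $St(\AAA)$ into clopen subsets of $N_F$, then restrict to $\AAA$ via Stone duality. The only cosmetic difference is that the paper pushes forward an arbitrary AN-sequence through the continuous bijection $\varphi_f\colon N_F\to Z_{St(\AAA)}(x,Y)$ (sending $p_F\mapsto x$) and invokes its continuity wholesale, whereas you first discard the mass at $p_F$ via Proposition \ref{prop_disjoint} and then verify the clopen-preimage property by hand in your two cases, which amounts to unpacking that same continuity.
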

\begin{proof}
Let $\seqnN{\nu_n}$ be an anti-Nikodym sequence of measures on $N_F$. For each $n\in\omega$ we define a measure $\mu_n$ on $St(\AAA)$ by setting
\[\mu_n(B) = \nu_n\big(\varphi_f^{-1}[B]\big) \]
for every $B\in Bor\big(St(\AAA)\big)$. Then, we get that $\|\mu_n\|=\|\nu_n\|\to\infty$ and that for each $n\in\omega$ the support $\supp(\mu_n)$ is finite and contained in $Z_{St(\AAA)}(x, Y)$. 

Since $\varphi_{f}$ is continuous, for any $A\in\clopen\big(St(\AAA)\big)$ we have $\varphi_f^{-1}[A]\in\clopen(N_F)$, hence by the assumption that $\seqnN{\nu_n}$ is anti-Nikodym we get:
\[\mu_n(A)=\nu_n\big(\varphi_f^{-1}[A]\big)\to 0 \textrm{ as } n\to\infty.\]

For each $n\in\omega$ we define a measure $\lambda_n$ on the algebra $\AAA$ by the formula $\lambda_n(A) = \mu_n\big([A]_{\AAA}\big)$ for every $A\in\AAA$. By the properties of the sequence $\seqnN{\mu_n}$ we have
 $\|\lambda_n\|\to\infty$ and $\lambda_n(A)\to 0$ for every $A\in\AAA$. Therefore, $\seqnN{\lambda_n}$ is a pointwise bounded sequence of measures on $\AAA$ which is not uniformly bounded, and hence $\AAA$ does not have the Nikodym property.
\end{proof}

\begin{theorem}
Suppose $\AAA$ is a Boolean algebra and $G$ is a filter on $\omega$. Let $\varphi\colon N_G\to St(\AAA)$ be a continuous function such that $\varphi^{-1}\big(\varphi(p_G)\big)=\{p_G\}$ (e.g. $\varphi$ is an injection). 

Then, if $G$ does not have the Nikodym property, then $\AAA$ does not have the Nikodym property.
\end{theorem}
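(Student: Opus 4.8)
The plan is to reduce this to the argument of Proposition \ref{prop_associated} by pushing an anti-Nikodym sequence on $N_G$ forward along $\varphi$. The point that needs care is that $\varphi$ is no longer assumed injective, so the image measures might lose total variation through cancellation: if two points $k,l\in\omega$ with $\varphi(k)=\varphi(l)$ carry masses of opposite sign, those masses annihilate each other in the pushforward. The remedy is to choose the anti-Nikodym sequence so that all of the diverging mass is concentrated at $p_G$, whose image is protected from any collision by the hypothesis $\varphi^{-1}\big(\varphi(p_G)\big)=\{p_G\}$.

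First I would apply Theorem \ref{thm_positive_measures}: since $G$ does not have the Nikodym property, there is a sequence $\seqnN{m_n}$ of finitely supported non-negative measures on $N_G$ with $\supp(m_n)\sub\omega$, $m_n(\omega)\to\infty$, and $m_n(\omega\sm A)\to 0$ for every $A\in G$. As in the proof of that theorem, the signed measures $\nu_n=m_n(\omega)\cdot\delta_{p_G}-m_n$ then form an AN-sequence on $N_G$; in particular $\nu_n(C)\to 0$ for every $C\in\clopen(N_G)$. Writing $x=\varphi(p_G)$, I would push these forward by setting $\mu_n(B)=\nu_n\big(\varphi^{-1}[B]\big)$ for $B\in Bor\big(St(\AAA)\big)$; each $\mu_n$ is a finitely supported signed measure on $St(\AAA)$.

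The crux is the norm divergence. Because $\supp(m_n)\sub\omega$ and $\varphi^{-1}(x)=\{p_G\}$, the point $x$ lies outside $\varphi[\supp(m_n)]$, so the mass that $\nu_n$ puts on $p_G$ is transported to $x$ with no cancellation against the non-positive part coming from $-m_n$. Explicitly $\mu_n(\{x\})=\nu_n(\{p_G\})=m_n(\omega)$, whence $\|\mu_n\|\geq m_n(\omega)\to\infty$. For the clopen convergence, fix $A\in\clopen\big(St(\AAA)\big)$; by continuity of $\varphi$ the preimage $\varphi^{-1}[A]$ is clopen in $N_G$, so $\mu_n(A)=\nu_n\big(\varphi^{-1}[A]\big)\to 0$.

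Finally I would transfer everything to the algebra exactly as in Proposition \ref{prop_associated}: define $\lambda_n(A)=\mu_n\big([A]_{\AAA}\big)$ for $A\in\AAA$, a finitely additive signed measure. Since the finitely many support points of $\mu_n$ are distinct ultrafilters and are therefore separated by clopen sets of $St(\AAA)$, we get $\|\lambda_n\|=\|\mu_n\|\to\infty$, while $\lambda_n(A)=\mu_n\big([A]_{\AAA}\big)\to 0$ for every $A\in\AAA$ because $[A]_{\AAA}$ is clopen. Thus $\seqnN{\lambda_n}$ is pointwise convergent to $0$, hence pointwise bounded, but not uniformly bounded, so $\AAA$ does not have the Nikodym property. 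The one genuine obstacle is the cancellation described above; once the mass is concentrated at $p_G$ and insulated by the hypothesis, the rest is a routine transcription of Proposition \ref{prop_associated}.
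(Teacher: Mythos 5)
Your proof is correct, but it takes a genuinely different route from the paper's. The paper never pushes measures forward along $\varphi$ directly: instead it sets $Y=\varphi[\omega]$, $x=\varphi(p_G)$, introduces the filter $F$ on $\omega$ that is $f$-associated to the trace filter $\mathfrak{J}_{St(\AAA)}(x,Y)$ for a bijection $f\colon\omega\to Y$, and shows that $g=f^{-1}\circ(\varphi\restriction\omega)$ is a \katetov\ reduction witnessing $F\leq_K G$ --- the hypothesis $\varphi^{-1}\big(\varphi(p_G)\big)=\{p_G\}$ is used there to see that $\varphi^{-1}\big[f[A]\cup\{x\}\big]$ is a neighbourhood of $p_G$, so that $g^{-1}[A]\in G$. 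It then quotes Proposition \ref{katetov_nikodym} to transfer the failure of the Nikodym property from $G$ to $F$, and Proposition \ref{prop_associated} to pass from $F$ to $\AAA$. You instead inline the whole mechanism: you invoke Theorem \ref{thm_positive_measures} to replace an arbitrary AN-sequence by non-negative measures supported on $\omega$, concentrate the diverging mass at $p_G$ via $\nu_n=m_n(\omega)\cdot\delta_{p_G}-m_n$, and push forward along $\varphi$ itself, using the hypothesis on $\varphi^{-1}\big(\varphi(p_G)\big)$ to guarantee $\mu_n(\{x\})=m_n(\omega)$, i.e.\ that no mass from $\varphi[\omega]$ can cancel against the atom at $x$. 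This is exactly the same cancellation issue that the positive-measure characterization is designed to defuse --- the paper's Proposition \ref{katetov_nikodym} uses Theorem \ref{thm_positive_measures} for the same reason when pushing measures along a \katetov\ reduction --- so the two arguments share their engine, but yours is more self-contained and direct (no intermediate filter, no appeal to Propositions \ref{katetov_nikodym} and \ref{prop_associated}), while the paper's factorization makes visible that the phenomenon is governed by the \katetov\ order and reuses machinery already established for the associated-filter framework. One small point worth making explicit in your write-up: the final identity $\|\lambda_n\|=\|\mu_n\|$ holds because the finitely many atoms of $\mu_n$ are distinct points of the zero-dimensional compact space $St(\AAA)$ and can be separated by pairwise disjoint clopen sets, whose unions realize the total variation within the algebra; for the conclusion the inequality $\|\lambda_n\|\geq m_n(\omega)$ (separating the single atom at $x$ from the rest) already suffices.
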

\begin{proof}
Let us define $Y=\varphi[\omega]$ and $x=\varphi(p_G)$. It follows that $Y$ is an infinite countable subset of $St(\AAA)$, as if $Y$ was finite, in particular closed in $St(\AAA)$, then it would imply that $\varphi^{-1}[Y]=\omega$ is closed in $N_F$. 
We also have $x\in\overline{Y}\sm Y$. Let $f\colon\omega\to Y$ be a bijection and let $F$ be the filter on $\omega$ $f$-associated to $\mathfrak{J}_{X}(x, Y)$. We define $g\colon\omega\to\omega$ by the formula 
$g(n)=f^{-1}\big(\varphi(n))$ for every $n\in\omega$, i.e. $g=f^{-1}\circ(\varphi\restriction\omega)$. 

We claim that the function $g$ is a witness for $F\leq_K G$. Let $A\in F$. Since $F$ is $f$-associated to $\mathfrak{J}_{St(\AAA)}(x, Y)$, we have $f[A]\in\mathfrak{J}_{St(\AAA)}(x, Y)$, and thus $f[A]\cup\{x\}$ contains an open neighbourhood of $x$ in $Z_{St(\AAA)}(x, Y)$. Since $\varphi$ is continuous and $\varphi^{-1}(\{x\})=\{p_G\}$, the set
$\varphi^{-1}\big[f[A]\cup\{x\}\big]=\varphi^{-1}\big[f[A]\big]\cup\{p_G\}$ contains an open neighbourhood of $p_G$ in $N_G$, and so $g^{-1}[A]=\varphi^{-1}\big[f[A]\big]\in G$.

Therefore, $F\leq_K G$. By Proposition, \ref{katetov_nikodym} the filter $F$ does not have the Nikodym property, thus by Proposition \ref{prop_associated} the algebra $\AAA$ does not have the Nikodym property.
\end{proof}

\begin{corollary}\label{corollarynikodymalgebras}
Let $\AAA$ be a Boolean algebra and let $G$ be a filter on $\omega$ that does not have the Nikodym property. If $N_G$ homeomorphically embeds into $St(\AAA)$, then $\AAA$ does not have the Nikodym property.
\end{corollary}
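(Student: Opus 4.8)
The plan is to reduce the statement directly to the theorem proved immediately above. Recall that a homeomorphic embedding of $N_G$ into $St(\AAA)$ is, by definition, a homeomorphism onto its image, and so in particular a continuous injection $\varphi\colon N_G\to St(\AAA)$. The only hypothesis of the preceding theorem that then needs checking is the fibre condition $\varphi^{-1}\big(\varphi(p_G)\big)=\{p_G\}$; but this is immediate from injectivity, and indeed the theorem's statement explicitly records injectivity as a sufficient special case.

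With this observation in hand, I would simply invoke the theorem: since $\varphi$ is continuous, satisfies the fibre condition, and $G$ fails the Nikodym property by assumption, it follows at once that $\AAA$ does not have the Nikodym property. This is the entire argument.

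There is no genuine obstacle, as all the substance resides in the preceding theorem (and, beneath it, in Propositions \ref{prop_associated} and \ref{katetov_nikodym}): the theorem constructs from the point $x=\varphi(p_G)$ and the set $Y=\varphi[\omega]$ a free filter $F$ that is $f$-associated to $\mathfrak{J}_{St(\AAA)}(x,Y)$, establishes $F\leq_K G$ precisely by exploiting the fibre condition, and then transports the failure of the Nikodym property from $G$ down to $F$ via Proposition \ref{katetov_nikodym} and finally onto $\AAA$ via Proposition \ref{prop_associated}. The corollary is thus a clean specialization, obtained by discarding the slightly more general injectivity-free hypothesis in favour of an honest topological embedding; the only thing one must be careful about is the bookkeeping verification that such an embedding indeed meets the fibre requirement, which is trivial.
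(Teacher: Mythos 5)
Your proposal is correct and matches the paper exactly: the paper gives no separate proof of this corollary precisely because it is an immediate specialization of the theorem preceding it, with injectivity of the embedding trivially yielding the fibre condition $\varphi^{-1}\big(\varphi(p_G)\big)=\{p_G\}$. Nothing further is needed.
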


The following corollary follows from Theorem \ref{nikodym_equiv} and Corollary \ref{summable}.

\begin{corollary}
 Let $\AAA$ be a Boolean algebra and let $\II$ be a summable ideal on $\omega$, or a density ideal which is not totally bounded. If $N_{\II^*}$ homeomorphically embeds into $St(\AAA)$, then $\AAA$ does not have the Nikodym property.
\end{corollary}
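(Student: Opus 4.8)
The plan is to reduce the whole statement to Corollary~\ref{corollarynikodymalgebras}, which already packages the general implication we need: if a filter $G$ without the Nikodym property embeds, via $N_G$, into $St(\AAA)$, then $\AAA$ fails the Nikodym property. Consequently, it suffices to verify in each of the two cases that the dual filter $\II^*$ does not have the Nikodym property, equivalently that $\II\in\AAA\NN$. Once this is in hand, the homeomorphic embedding hypothesis lets Corollary~\ref{corollarynikodymalgebras} (applied with $G=\II^*$) finish the proof.

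First I would treat the case where $\II=\II_f$ is a summable ideal. Here there is nothing to prove beyond invoking Corollary~\ref{summable}, which asserts precisely that the dual filter $\II_f^*$ of every summable ideal does not have the Nikodym property. Hence $\II\in\AAA\NN$.

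Next I would treat the case where $\II$ is a density ideal that is \emph{not} totally bounded. The key is Theorem~\ref{nikodym_equiv}, whose list of equivalent conditions for a density ideal includes both condition $(5)$, that $\II$ is totally bounded, and condition $(7)$, that $\II^*$ has the Nikodym property. Since these two are equivalent, the failure of $(5)$---which is exactly our hypothesis---forces the failure of $(7)$, i.e.\ $\II^*$ does not have the Nikodym property, so again $\II\in\AAA\NN$.

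Having established $\II\in\AAA\NN$ in both cases, I would apply Corollary~\ref{corollarynikodymalgebras} to conclude that $\AAA$ does not have the Nikodym property. There is no substantive obstacle here: all the real content has been absorbed into Theorem~\ref{nikodym_equiv} and Corollaries~\ref{summable} and~\ref{corollarynikodymalgebras}. The only point requiring care is to match the hypothesis ``not totally bounded'' with the correct equivalent condition ($(5)\Leftrightarrow(7)$) in Theorem~\ref{nikodym_equiv}, rather than inadvertently appealing to a condition that holds for all density ideals; otherwise the argument is a direct chaining of the cited results.
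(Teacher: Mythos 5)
Your proposal is correct and follows exactly the paper's own route: the summable case is Corollary~\ref{summable}, the density case is the equivalence $(5)\Leftrightarrow(7)$ of Theorem~\ref{nikodym_equiv}, and both are then fed into Corollary~\ref{corollarynikodymalgebras} applied with $G=\II^*$. No discrepancies to report.
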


\begin{remark}
Let us note that embedding a suitable space $N_F$ into the Stone space is not the only reason for a Boolean algebra to lack the Nikodym property. There exists an algebra $\SSS$ which does not have the Nikodym property and such that $St(\SSS)$ does not contain any homeomorphic copy of a space $N_F$ for which $F\in\BB\JJ\NN\PP$ (see \cite[Example 4.10]{Sch} and \cite[Example 7.8]{MS}).
\end{remark}

The family of filters $\FF_1$ constructed in Corollary \ref{many_many} gives us a rich family of pairwise non-homeomorphic spaces $N_F$ which forbid the Nikodym property of Boolean algebras.
By \cite[Propositions 3.10.(2) and 4.3]{MS}, for each $F\in\FF_1$ the space $N_F$ contains no non-trivial convergent sequence.
Therefore, by Corollary \ref{corollarynikodymalgebras}, we get the following result.

\corollarymanyAN

\section{Final remarks and questions}

\subsection{Non-totally bounded non-pathological ideals}
By Corollary \ref{tot_bounded}, if $\II=\exh(\varphi)$ is a totally bounded ideal for some lsc submeasure $\varphi$, then $\II^*$ has the Nikodym property. By Remark \ref{nik_tot_bounded}, the converse does not hold, but the counterexamples are constructed only with pathological submeasures. Theorem \ref{nikodym_equiv} shows that the opposite implication is true for all density submeasures.
We do not know if this implication holds for all non-pathological submeasures.

\begin{question}
Does there exist a non-pathological lsc submeasure $\varphi$ such that the ideal $\exh(\varphi)$ is not totally bounded and $\exh(\varphi)^*$ has the Nikodym property?
\end{question}

\subsection{Tukey reductions}
The analysis of the cofinal structure of the order $(\AAA\NN, \leq_K)$ shows that it is, in some way, similarly complicated as the cofinal structure of $(\Baire, \leq^*)$. The standard way of comparing cofinal structures of two orders is through Tukey reductions, see e.g. \cite{Frem} and \cite{SolTod}.

Let $(P, \leq_P)$ and $(Q, \leq_Q)$ be partial orders. A function $f\colon P\to Q$ is said to be a \textit{Tukey reduction} if for every $q_0\in Q$ there exists $p_0\in P$ such that, for every $p\in P$, the following implication holds:
\[f(p)\leq_Q q_0 \: \Longrightarrow \: p \leq_P p_0, \]
i.e. if the preimages by $f$ of bounded subsets of $Q$ are bounded subsets of $P$. We write $P\preceq_T Q$ if there exists a Tukey reduction from $P$ to $Q$. We say that $P$ and $Q$ are \textit{Tukey equivalent}, which we denote by $P\equiv_T Q$, if $P\preceq_T Q$ and $Q\preceq_T P$.

An example of using Tukey reductions for classes of ideals ordered by $\leq_K$ is the following result about the class of all $F_\sigma$ ideals on $\omega$.

\begin{theorem}[Minami and Sakai \cite{MinSak}]
\[ (F_\sigma\textit{-ideals},\leq_K) \equiv_T (F_\sigma\textit{-ideals},\leq_{KB}) \equiv_T (\Baire, \leq^*). \]
\end{theorem}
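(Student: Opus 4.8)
The plan is to establish both Tukey equivalences at once by closing a cycle of three Tukey reductions and invoking transitivity of $\preceq_T$. The cheapest link is free: since every {\katetov}--Blass reduction is a {\katetov} reduction, $\II\leq_{KB}\JJ$ implies $\II\leq_K\JJ$, so the identity map from $(F_\sigma\text{-ideals},\leq_K)$ to $(F_\sigma\text{-ideals},\leq_{KB})$ is a Tukey reduction: given a $\leq_{KB}$-bound $\JJ$ of a set, that same $\JJ$ is a $\leq_K$-bound of its (identity) preimage, so preimages of bounded sets stay bounded. Thus $(F_\sigma,\leq_K)\preceq_T(F_\sigma,\leq_{KB})$. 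It then suffices to prove the two nontrivial links
\[(F_\sigma\text{-ideals},\leq_{KB})\preceq_T(\Baire,\leq^*)\quad\text{and}\quad(\Baire,\leq^*)\preceq_T(F_\sigma\text{-ideals},\leq_K);\]
chaining these around the identity reduction squeezes all three orders into a single $\equiv_T$-class.

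For the lower link $(\Baire,\leq^*)\preceq_T(F_\sigma\text{-ideals},\leq_K)$ I would exhibit an explicit unbounded map into a concrete, sufficiently cofinal family of genuinely $F_\sigma$ ideals, namely the summable ideals $\II_f$. Mimicking the operator $\Phi$ of Section~\ref{section_katetov}, I would assign to each (increasing) $g\in\Baire$ a summable ideal $\II_{h_g}$ whose defining weights decay at a rate governed by $g$, arranged so that the forward monotonicity of Theorem~\ref{thm_katetov}(2) has an analogue: a sufficiently fast domination of parameters yields a $\leq_K$-reduction between the corresponding summable ideals. The crux is the converse non-reduction: if a family $\{\II_{h_g}:g\in S\}$ were $\leq_K$-bounded by a single $F_\sigma$ ideal, then, via the combinatorial criterion for {\katetov} reductions between summable ideals and the strictness argument used in the proof of Theorem~\ref{thm_no_maximal}, the parameters $S$ would be $\leq^*$-bounded. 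Hence preimages of bounded sets are bounded and the map is a Tukey reduction.

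For the upper link $(F_\sigma\text{-ideals},\leq_{KB})\preceq_T(\Baire,\leq^*)$ I would code $F_\sigma$ ideals by functions. By Mazur's theorem every $F_\sigma$ ideal equals $\finI(\varphi)$ for some lower semicontinuous submeasure $\varphi$; to such an ideal I would attach $f_\II\in\Baire$ recording the growth of $\varphi$ on successive intervals, for instance the least cardinality (or least mass) needed on the $n$-th interval to push $\varphi$ past the value $n$. The required statement is that $\leq^*$-domination of codes reflects $\leq_{KB}$-boundedness of ideals: from one $g$ dominating $\{f_\II:\II\in\mathcal{G}\}$ I would build a single $F_\sigma$ ideal $\JJ_g$ together with finite-to-one maps witnessing $\II\leq_{KB}\JJ_g$ for every $\II\in\mathcal{G}$, reading the finite-to-one reductions off $g$ by redistributing mass as in Lemma~\ref{lem_katetov}. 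This is the main obstacle: the construction must be uniform over the entire $g$-controlled class and must deliver finite-to-one (not merely arbitrary) reductions, and the coding must be faithful in both directions, so that conversely a $\leq_{KB}$-unbounded family has $\leq^*$-unbounded codes. The delicate case is that $\mathcal{G}$ may contain pathological, non-P $F_\sigma$ ideals, for which the transparent mass redistribution available for density and summable ideals must be replaced by a purely submeasure-theoretic argument; once faithfulness of the coding is verified, the assignment $\II\mapsto f_\II$ is the desired Tukey reduction and the cycle closes.
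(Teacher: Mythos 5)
A preliminary remark: this statement is not proved in the paper at all --- it is quoted as a known theorem of Minami and Sakai \cite{MinSak} --- so your proposal cannot be compared against an argument in the text; it has to stand on its own as a proof of the cited result.

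Your skeleton is sound. The identity map is indeed a Tukey reduction from $(F_\sigma\textit{-ideals},\leq_K)$ to $(F_\sigma\textit{-ideals},\leq_{KB})$, since any $\leq_{KB}$-bound is automatically a $\leq_K$-bound, and a cycle of three $\preceq_T$-reductions yields all the stated equivalences by transitivity. The problem is that neither of the two substantive links is actually proved: both are restated as goals, supported by appeals to machinery from this paper that does not apply to general $F_\sigma$ ideals. Lemma \ref{lem_katetov}, Theorem \ref{thm_katetov} and Theorem \ref{thm_no_maximal} all live in the world of density ideals --- submeasures which are suprema of disjointly supported, finitely supported measures, where mass can be redistributed explicitly --- and of exhaustive ideals $\exh(\varphi)$, which are analytic P-ideals. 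By Mazur's theorem a general $F_\sigma$ ideal is $\finI(\varphi)$ for an lsc submeasure $\varphi$ that may be pathological, and such an ideal need not be a P-ideal; none of the paper's lemmas transfer to this setting.

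Concretely, two gaps. For the lower link you must show that for every $F_\sigma$ ideal $\JJ$ the set $\{g\in\Baire\colon \II_{h_g}\leq_K\JJ\}$ is $\leq^*$-bounded; the tools you invoke --- a combinatorial criterion for Kat\v{e}tov reductions \emph{between summable ideals}, and the strictness argument of Theorem \ref{thm_no_maximal} --- concern reductions into summable, respectively density, ideals, whereas here $\JJ$ is an arbitrary $F_\sigma$ ideal. Moreover, the strictness argument relies on Lemma \ref{katetov_blass} to upgrade a Kat\v{e}tov reduction to a finite-to-one one, and that lemma requires the bounding ideal to be a P-ideal, which fails for general $F_\sigma$ ideals. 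For the upper link, the uniform construction of a single $F_\sigma$ ideal $\JJ_g$ that is $\leq_{KB}$-above every ideal whose code is dominated by $g$ --- including ideals given by pathological submeasures, where no measure-redistribution argument in the style of Lemma \ref{lem_katetov} is available --- is precisely the heart of Minami and Sakai's theorem; you correctly identify it as the main obstacle, but you leave it unresolved. As it stands, the proposal is a plausible road map rather than a proof: the architecture is right, but both pillars carrying the weight of the theorem are missing.
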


Using Theorem \ref{thm_katetov} it is not difficult to prove the following reduction.

\begin{proposition}\label{tukey}
$(\AAA\NN,\leq_K)\preceq_T (\Baire, \leq^*).$
\end{proposition}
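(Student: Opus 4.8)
The plan is to build an explicit Tukey reduction $\Psi\colon\AAA\NN\to\Baire$ directly from the operator $\Phi$ and Theorem \ref{thm_katetov}. For each $\II\in\AAA\NN$, Theorem \ref{thm_katetov}.(1) provides some $f\in\Baire$ with $\II\leq_K\Phi(f)$; inspecting the construction in that proof, the witness may be taken strictly positive on $\omega_+$, so that $\Phi(f)$ is indeed defined. Using the axiom of choice, I would fix one such witness for every $\II$ and declare $\Psi(\II)=f$. This is the candidate map, and the whole proof reduces to verifying the Tukey condition for $\Psi$.

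To check it, fix $g_0\in\Baire$ and define $h_0\in\Baire$ by $h_0(n)=2n^2\big(g_0(n)+1\big)$ for $n\in\omega_+$ (with $h_0(0)=0$). Then $h_0(n)>0$ for every $n\in\omega_+$, so $\Phi(h_0)$ is defined; moreover it is a density ideal generated by a sequence of measures of norm $n\to\infty$, so taking $\varphi=\sup_{n\in\omega_+}\mu_n^{h_0}$ we have a density submeasure with $\varphi(\omega)=\infty$ and $\Phi(h_0)^*\sub\exh(\varphi)^*$, whence by Theorem \ref{thm_AN_exh} the filter $\Phi(h_0)^*$ lacks the Nikodym property, i.e. $\II_0:=\Phi(h_0)\in\AAA\NN$. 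I claim $\II_0$ is the required bound: if $\II\in\AAA\NN$ satisfies $\Psi(\II)\leq^* g_0$, then, writing $f=\Psi(\II)$, we have $2n^2 f(n)\leq 2n^2 g_0(n)\leq h_0(n)$ for all but finitely many $n$, so Theorem \ref{thm_katetov}.(2) gives $\Phi(f)\leq_K\Phi(h_0)=\II_0$; combining this with $\II\leq_K\Phi(f)$ (from the definition of $\Psi$) and transitivity of $\leq_K$ yields $\II\leq_K\II_0$. Thus the $\Psi$-preimage of $\{q\in\Baire\colon q\leq^* g_0\}$ is $\leq_K$-bounded by $\II_0$, which is exactly the Tukey reduction property, and $(\AAA\NN,\leq_K)\preceq_T(\Baire,\leq^*)$ follows.

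The argument is short precisely because Theorem \ref{thm_katetov} carries all the weight: part (1) makes the image of $\Phi$ $\leq_K$-cofinal in $\AAA\NN$, and part (2) converts $\leq^*$-domination (with the uniform slack factor $2n^2$) into $\leq_K$-comparability of the corresponding $\Phi$-ideals. Consequently the only points demanding care — and the nearest thing to an obstacle — are bookkeeping ones: ensuring that each chosen witness lands in the domain of $\Phi$ (positivity on $\omega_+$), and, more importantly, that the bound $\II_0$ is genuinely a member of $\AAA\NN$ rather than just an arbitrary ideal. The latter is where the unboundedness $\sup_{n}\|\mu_n^{h_0}\|=\infty$ of the generating measures is essential, as it is exactly what places $\Phi(h_0)$ inside $\AAA\NN$ via Theorem \ref{thm_AN_exh}.
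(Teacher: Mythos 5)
Your proof is correct and follows essentially the same route as the paper: both use the cofinality of the image of $\Phi$ (Theorem \ref{thm_katetov}.(1)) and the transfer of $\leq^*$-domination with the $2n^2$ slack into $\leq_K$ (Theorem \ref{thm_katetov}.(2)); the only cosmetic difference is that the paper absorbs the factor $2n^2$ into the definition of $\Psi$ itself, whereas you place it in the bound $h_0$ chosen for a given $g_0$. Your extra bookkeeping --- ensuring the witness lies in the domain of $\Phi$ and that the bound $\Phi(h_0)$ really belongs to $\AAA\NN$ via Theorem \ref{thm_AN_exh} --- fills in details the paper leaves implicit.
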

\begin{proof}
We define a Tukey reduction $\Psi\colon\AAA\NN\to\Baire$ in the following way. Let $\II\in\AAA\NN$. By Theorem $\ref{thm_katetov}.(1)$ there exists $f\in\Baire$ such that
$\II\leq_K\Phi(f)$. Let us fix any such $f\in\Baire$, and put $\Psi(\II)=g$, where $g(n)=2n^2\cdot f(n)$ for every $n\in\omega$.

To prove that $\Psi$ is a Tukey reduction, let us take any $h\in\Baire$. We need to show that the following implication holds for every $\II\in\AAA\NN$:
\[\Psi(\II)\leq^* h \: \Longrightarrow \: \II\leq_K \Phi(h). \]

By the definition of $\Psi$, if $\Psi(\II)\leq^* h$ then there exists $f\in\Baire$ such that $\II\leq_K\Phi(f)$ and $2n^2\cdot f(n)\leq h(n)$ holds for all but finitely many $n\in\omega$.
By Theorem $\ref{thm_katetov}.(2)$ we get $\Phi(f)\leq_K \Phi(h)$, and so $\II\leq_K\Phi(h)$.
\end{proof}
 
Note that Corollaries \ref{dom} and \ref{bound} both follow directly from Proposition \ref{tukey} by \cite[Theorem 1J]{Frem}. We do not know if reverse Tukey reduction exist.

\begin{question}
Does it hold $(\Baire, \leq^*)\preceq_T (\AAA\NN,\leq_K)$?
\end{question}

\end{document}